\newcommand{\C}{\mathbb{C}}
\newcommand{\N}{\mathbb{N}}
\newcommand{\D}{\mathbb{D}}
\newcommand{\DD}{\widehat{\mathcal{D}}}
\newcommand{\Dd}{\widecheck{\mathcal{D}}}
\newcommand{\DDD}{\mathcal{D}}
\newcommand{\Hcal}{\mathcal{H}}
\newcommand{\Zcal}{\mathcal{Z}}
\newcommand{\re}{\textrm{Re}\,}
\newcommand{\omh}{\widehat{\om}}
\def\a{\alpha}       \def\b{\beta}        \def\g{\gamma}
\def\d{\delta}           \def\e{\varepsilon}
     \def\om{\omega}      
\def\s{\sigma}       \def\t{\theta}       
         \def\r{\rho}         \def\z{\zeta}
                  \def\vp{\varphi}
\def\G{\Gamma}
\theoremstyle{definition}
\theoremstyle{plain}
\newtheorem{lem}{Lemma}
\newtheorem{theo}[lem]{Theorem}
\newtheorem{prop}[lem]{Proposition}
\newtheorem{coro}[lem]{Corollary}
\theoremstyle{definition}
\theoremstyle{plain}
\newtheorem{lettertheorem}{Theorem}
\newtheorem{letterlemma}[lettertheorem]{Lemma}
\newenvironment{pro}
{\noindent{\it Proof. }}{\hfill $\Box$\par\vspace{2.5mm}}
\newenvironment{proofof}[1]
{\noindent{\it Proof of #1. }}{\hfill $\Box$\par\vspace{2.5mm}}
\numberwithin{equation}{section}
\begin{document}

\title[Zero sequences, factorization and sampling]{Zero sequences, factorization and sampling measures for weighted Bergman spaces}

\keywords{Bekoll\'e-Bonami weight, Bergman space, composition operator, dominating set, doubling weight, factorization, Hankel operator, integration operator, sampling measure, zero sequence}

\thanks{This research was supported in part by Ministerio de Econom\'{\i}a y Competitivivad,
Spain, project MTM2014-52865-P; by Academy of Finland project no.~268009, and by Faculty of Science and Forestry of University of Eastern Finland.}

\subjclass[2010]{30H20 (primary), 30C15 (secondary)}

\date{\today}

\author{Taneli Korhonen}
\address{University of Eastern Finland, P.O.Box 111, 80101 Joensuu, Finland}
\email{taneli.korhonen@uef.fi}

\author{Jouni R\"atty\"a}
\address{University of Eastern Finland, P.O.Box 111, 80101 Joensuu, Finland}
\email{jouni.rattya@uef.fi}

\maketitle

\begin{abstract}
The zero sets of the Bergman space~$A^p_\om$ induced by either a radial weight~$\om$ admitting a certain doubling property or a non-radial Bekoll\'e-Bonami type weight are characterized in the spirit of Luecking's results from 1996. Accurate results obtained en route to this characterization are used to generalize Horowitz's factorization result from 1977 for functions in $A^p_\om$. The utility of the obtained factorization is illustrated by applications to integration and composition operators as well as to small Hankel operator induced by a conjugate analytic symbol. Dominating sets and sampling measures for the weighted Bergman space $A^p_\om$ induced by a doubling weight are also studied. Several open problems related to the scheme of the paper are posed.
\end{abstract}

\section{Introduction and main results}\label{sec:Intro}

Let $\Hcal(\D)$ denote the space of analytic functions in the unit disc $\D=\{z\in\C:|z|<1\}$ of the complex plane $\C$. A function $\omega:\D\to[0,\infty)$, integrable over $\D$, is called a
weight. It is radial if $\omega(z)=\omega(|z|)$ for all $z\in\D$. For
$0<p<\infty$ and a weight $\omega$, the weighted Bergman
space $A^p_\omega$ consists of $f\in\Hcal(\D)$ such that
    $$
    \|f\|_{A^p_\omega}^p=\int_\D|f(z)|^p\omega(z)\,dA(z)<\infty,
    $$
where $dA(z)=\frac{dx\,dy}{\pi}$ is the normalized
Lebesgue area measure on $\D$. As usual,~$A^p_\alpha$ stands for the classical weighted
Bergman space induced by the standard radial weight $\omega(z)=(1-|z|^2)^\alpha$, where
$-1<\alpha<\infty$. For $f\in\Hcal(\D)$ and $0<r<1$, set
    \begin{equation*}
    \begin{split}
    M_p(r,f)&=\left(\frac{1}{2\pi}\int_{0}^{2\pi} |f(re^{it})|^p\,dt\right)^{1/p},\quad
    0<p<\infty,
    \end{split}
    \end{equation*}
and $M_\infty(r,f)=\max_{|z|=r}|f(z)|$. For $0<p\le\infty$, the Hardy space $H^p$ consists of $f\in \Hcal(\mathbb D)$ such that $\|f\|_{H^p}=\sup_{0<r<1}M_p(r,f)<\infty$.

In this paper we are mainly interested in zero-sequences, factorization, dominating sets and sampling measures for the Bergman space $A^p_\om$ induced by either a non-radial weight belonging to a kind of Bekoll\'e-Bonami class or a radial weight admitting a certain doubling property. Our studies of zeros and factorization go hand-in-hand and dominating sets and sampling measures are almost equally interrelated. While our results on zeros and factorization improve and generalize certain results in the literature, our findings on dominating sets and sampling measures are less complete but offer a somewhat new approach to these topics and also give arise to several open problems.

We begin with zeros and factorization. Let $\om:\D\to[0,\infty)$ be a weight, $0 < p < \infty$ and $f \in A^p_\om$ such that $f\not\equiv0$. Then a sequence $Z \subset \D$ is called the \emph{zero set (or sequence)} of $f$, denoted by $\Zcal(f)$, if $f(a) = 0$ for all $a \in Z$, counting multiplicities, and $f(a) \neq 0$ for all $a\in\D\setminus Z$. A set $Z$ is called a \emph{zero set} for $A^p_\om$ if there exists a nonzero function $f \in A^p_\om$ such that $Z = \Zcal(f)$. The set of all zero sets of $A^p_\om$ is denoted by $\Zcal(A^p_\om)$.

Zeros of functions in Hardy spaces are neatly characterized by the Blaschke condition and each Hardy-function~$f$ admits the well-known inner-outer factorization, where the inner part is a product of a singular inner function and a Blaschke product containing all the zeros of~$f$ and the non-vanishing outer part has the same norm as~$f$~\cite{Duren1970}. The situation of Bergman spaces is completely different because the distribution of zeros of functions in Bergman spaces is not that well understood neither such an efficient factorization as in the Hardy space case is known. Even if the geometric distribution of zeros is not completely described, the difference between known sufficient and necessary conditions for a sequence to be a zero set for~$A^p_\alpha$ is small. Probably the most commonly known results in this context are due to Luecking, Korenblum, Hedenmalm, Horowitz and Seip. Horowitz~\cite{Horzeros,Horzeros1,Horzeros2} studied unions, subsets and dependence on~$p$ of the zero sets of functions in $A^p_\alpha$ and obtained accurate information on certain sums involving the moduli of zeros. Some of these results were generalized to certain~$A^p_\om$ in \cite{PR2014Memoirs}. The studies by Korenblum~\cite{Kor}, Hedenmalm~\cite{HedStPeter} and Seip~\cite{S1,S2} employ methods based on the use of densities defined in terms of partial Blaschke sums, Stolz star domains and Beurling-Carleson characteristic of the corresponding boundary set, and yield more complete results. Luecking~\cite{L1996} gave a description of zero sets of $A^p_\alpha$ in terms of certain auxiliary functions induced by the zeros. Even if this characterization does not reveal the geometric distribution very transparently, it yields accurate information on the subsets of zero sets. Horowitz~\cite{HorFacto} also established a useful factorization theorem for functions in $A^p_\alpha$, but this factorization does not allow to take one of factors non-vanishing, and thus does not behave equally well as the inner-outer factorization in Hardy spaces. This factorization result was generalized to some~$A^p_\om$ in~\cite{HorSchna,PR2014Memoirs}.

We start with employing Luecking's~\cite{L1996} approach to describe $A^p_\om$ zero sets when $\om$ belongs to a kind of Bekoll\'e-Bonami class. For $1<q<\infty$, write $\om\in B_q$ if the weight $\om$ is (almost everywhere) strictly positive and
    \begin{equation*}
    B_q(\om)=\sup_{S}\left(\frac{1}{|S|^2}\int_{S}\om(z)(1-|z|^2)^{2q}\,dA(z)\right)
        \left(\frac{1}{|S|^2}\int_{S} \om(z)^{-\frac1{q-1}}\,dA(z)\right)^{q-1}
        <\infty,
    \end{equation*}
where the supremum is taken over all Carleson squares $S\subset\D$, and $|S|$ denotes the Euclidean area of $S$. Denote $B_\infty = \bigcup_{q>1}B_q$ for short. Recall that each Carleson square is of the form
    $$
    S(z)=\left\{re^{i\theta}\in\D:|z|<r<1,\,|\arg ze^{-i\theta}|<\frac{1-|z|}2\right\},\quad z\in\D\setminus\{0\}.
    $$

In the beginning of Section~\ref{sec:ProofThm1} we briefly analyze the classes $B_q$. In particular, we show that $B_p\subsetneq B_q$ for $1<p<q<\infty$ and discuss their Kerman-Torchinsky properties. 
Following~Luecking~\cite{L1996}, for a sequence $Z \subset \D$, we use the notation
    \begin{equation*}
    \begin{split}
    \psi_Z(z)&= \prod_{a\in Z} \overline{a}\frac{a-z}{1-\overline{a}z} \exp\left(1-\overline{a}\frac{a-z}{1-\overline{a}z}\right), \quad z \in \D,\\
    W_Z(z)&= e^{k_Z(z)},\quad k_Z(z)= \frac{|z|^2}2 \sum_{a \in Z}\frac{\left(1-|a|^2\right)^2}{|1-\overline{a}z|^2}, \quad z \in \D.
    \end{split}
    \end{equation*}
The first of our main results on Bergman zero sets is a generalization of \cite[Theorem~3]{L1996} and reads as follows.

\begin{theo}\label{theo:zeroset}
Let $0 < p < \infty$, $\om \in B_\infty$ and $Z$ be a sequence in $\D$. Then the following statements are equivalent:
\begin{itemize}
    \item[\rm (a)] $Z \in \Zcal(A^p_\om)$;

    \item[\rm (b)] Any subsequence of $Z$ belongs to $\Zcal(A^p_\om)$;

    \item[\rm (c)] $\sum_{a \in Z} (1-|a|)^2 < \infty$ and there is a nowhere zero function $F \in \Hcal(\D)$ such that $FW_Z \in L^p_\om$;

    \item[\rm (d)] $\sum_{a \in Z} (1-|a|)^2 < \infty$ and there is a nonzero function $F \in \Hcal(\D)$ such that $FW_Z \in L^p_\om$.
\end{itemize}
Moreover, if \emph{(a)} is true, then the mapping $f \mapsto f/\psi_Z$ is a continuous isomorphism from $\left\{f \in A^p_\om : Z \subset \Zcal(f)\right\}$ onto $\left\{F \in \Hcal(\D) : FW_Z \in L^p_\om\right\}$.
\end{theo}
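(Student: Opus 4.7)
\medskip

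\textbf{Proof plan.} My plan is to derive all four equivalences from one master integral equivalence: for every $F\in\Hcal(\D)$,
\begin{equation}\label{eq:plan-star}
\int_\D |F(z)\psi_Z(z)|^p\,\om(z)\,dA(z) \;\asymp\; \int_\D |F(z)|^p\,W_Z(z)^p\,\om(z)\,dA(z).
\end{equation}
Granting \eqref{eq:plan-star}, the implications follow cheaply. For (c)$\Rightarrow$(a) set $f:=F\psi_Z$, whose zero set is exactly $Z$ since $F$ is nowhere zero. For (a)$\Rightarrow$(c) set $F:=f/\psi_Z$, which is holomorphic and nowhere zero, and read off $FW_Z\in L^p_\om$ from \eqref{eq:plan-star}. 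The moreover statement is \eqref{eq:plan-star} restated, and the same equivalence also handles the case of $F$ with zeros, which is what the isomorphism between $\{f\in A^p_\om:Z\subset\Zcal(f)\}$ and $\{F\in\Hcal(\D):FW_Z\in L^p_\om\}$ requires. The implication (c)$\Rightarrow$(d) is trivial; for (d)$\Rightarrow$(c), apply (a)$\Rightarrow$(c) to the supersequence $Z\cup\Zcal(F)$ (a zero set because $F\psi_Z\in A^p_\om$) to obtain a nowhere-zero witness $G$, and observe that $G\psi_{\Zcal(F)}$ is then a nowhere-zero function with $G\psi_{\Zcal(F)}\cdot W_Z\in L^p_\om$ (since $W_{Z\cup\Zcal(F)}\geq W_Z$). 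Finally (a)$\Leftrightarrow$(b) comes for free: $W_{Z'}\leq W_Z$ pointwise for every subsequence $Z'\subset Z$, so (c) for $Z$ descends to (c), and hence to (a), for each $Z'$.

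The proof of \eqref{eq:plan-star} rests on two estimates for $\psi_Z$. The first, giving the $\lesssim$ direction of \eqref{eq:plan-star}, is the pointwise upper bound $|\psi_Z(z)|\leq W_Z(z)$ on $\D$. I would prove this by direct computation: using $1-\overline{a}(a-z)/(1-\overline{a}z)=(1-|a|^2)/(1-\overline{a}z)$, the elementary inequality $\log|a|+(1-|a|^2)/2\leq 0$, and $\log(1-x)\leq -x$ applied to the ratio $|a-z|^2/|1-\overline{a}z|^2$, the linear Poisson-kernel cross-term cancels and the remaining exponent is exactly $k_Z(z)$. The second, and far more delicate, estimate is the averaged lower bound
\[
W_Z(z)^p \;\lesssim\; \frac{1}{|\De(z)|}\int_{\De(z)}|\psi_Z(w)|^p\,dA(w),
\]
where $\De(z)$ denotes a pseudohyperbolic disc of radius comparable to $1-|z|$ centered at $z$. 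Following Luecking, I would establish this by a pigeonhole argument: the pseudohyperbolic area occupied by the zeros of $\psi_Z$ inside $\De(z)$ is bounded away from the full area, so there exists $w_0\in\De(z)$ at which $\psi_Z$ avoids its zeros by a definite factor and the pointwise bound $|\psi_Z(w_0)|\gtrsim W_Z(z)$ is restored; subharmonicity of $|\psi_Z|^p$ then promotes this value at $w_0$ to the claimed average on a slightly shrunken disc.

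With both estimates in hand, the remaining $\gtrsim$ direction of \eqref{eq:plan-star} reduces, after combining the averaged lower bound with the subharmonicity of $|F|^p$ (or of $|F\psi_Z|^p$), interchanging integrals via Fubini, and comparing Bergman discs with Carleson squares, to the $L^p_\om$-boundedness of a Bergman-type maximal operator acting on $|F\psi_Z|^p$. This boundedness is precisely the Kerman-Torchinsky property of the classes $B_q$, whose analysis I would carry out at the start of Section~\ref{sec:ProofThm1}; it is at this point that the Bekoll\'e-Bonami hypothesis $\om\in B_\infty$ enters decisively.

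The main obstacle is the averaged lower bound on $|\psi_Z|^p$: it is the only step that requires genuine understanding of how the zeros of $\psi_Z$ distribute inside a Bergman disc, while everything else is either algebra, standard subharmonicity, or weighted maximal-function theory. A secondary technical point is the Blaschke-type summability $\sum_{a\in Z}(1-|a|)^2<\infty$ needed merely for $\psi_Z$ and $W_Z$ to be well-defined; I would extract this from Jensen's formula applied to $f\in A^p_\om$ with $\Zcal(f)=Z$, converting the area $A^p_\om$-control into the radial control Jensen requires by invoking the $B_\infty$ hypothesis.
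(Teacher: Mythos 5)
Your reduction of (a)--(d) and of the isomorphism statement to the single comparison $\|F\psi_Z\|_{L^p_\om}\asymp\|FW_Z\|_{L^p_\om}$ is exactly the skeleton of the paper's proof: the paper obtains this comparison from Proposition~\ref{prop:zeroseq} together with the identity $|\psi_Z(z)|=\prod_{a\in Z}|a|e^{(1-|a|^2)/2}\cdot\prod_{a\in Z}\bigl(|\vp_a(z)|e^{(1-|\vp_a(z)|^2)/2}\bigr)\cdot W_Z(z)$, and your pointwise bound $|\psi_Z|\le W_Z$, the extraction of $\sum_{a\in Z}(1-|a|)^2<\infty$ from Jensen's formula plus $\om\in B_\infty$, and the subsequence equivalence all match. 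One bookkeeping slip: in your (d)$\Rightarrow$(c) the function $G\psi_{\Zcal(F)}$ vanishes on $\Zcal(F)$, so it is not a nowhere-zero witness; the correct route (the paper's) is (d)$\Rightarrow$(a) via $f=F\psi_Z\in A^p_\om$, $Z\subset\Zcal(f)$ and the subsequence property, and then (a)$\Rightarrow$(c).

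The genuine gap is the hard direction $\|FW_Z\|_{L^p_\om}\lesssim\|F\psi_Z\|_{L^p_\om}$. Your key lemma, $W_Z(z)^p\lesssim|\Delta(z,r)|^{-1}\int_{\Delta(z,r)}|\psi_Z|^p\,dA$, is false for small $r$ and is not provable by the pigeonhole sketch even when true: take a zero of multiplicity $N$ at $z$; then $W_Z(z)\approx e^{N/2}$ while $\sup_{\Delta(z,r)}|\psi_Z|\le\bigl(re^{1+r}\bigr)^N$, which for $r=1/4$ is about $(0.87)^N$, so even the supremum (let alone the average) is exponentially smaller than $W_Z(z)$. The difficulty is an exponential competition between the cumulative loss from the factors $|\vp_a(w)|e^{(1-|\vp_a(w)|^2)/2}<1$ and the size of $W_Z$, with $k_Z(z)$ unbounded in $z$ even for fixed $Z$; "choosing $w_0$ at definite pseudohyperbolic distance from the zeros" gives no control of this, and this is not Luecking's argument. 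Two further steps would also fail as described: a product of local averages does not dominate the local average of the product, so you cannot combine the lemma with subharmonicity of $|F|^p$ to bound $|F(z)|^pW_Z(z)^p$; and after Fubini your weighted step requires an averaging/Carleson-type bound at the exponent $p$ itself (possibly $p\le1$), which $\om\in B_\infty$ does not provide. The paper avoids all of this by applying Jensen's formula to $f=F\psi_Z$ (legitimate since $\log|f|\in L^1$ when $\om\in B_\infty$), exponentiating with a free exponent $\d=p/q$ via Jensen's inequality to get $|F(z)|W_Z(z)\lesssim R\bigl(|f|^{p/q}\bigr)(z)^{q/p}$ with $R$ the M\"obius-kernel operator, and then invoking the Bekoll\'e--Bonami theorem to get $R:L^q_\om\to L^q_\om$ for some $q>1$; any repair of your plan would have to reintroduce such a free exponent and would essentially reconstruct that argument.
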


If $F$ is that of Theorem~\ref{theo:zeroset} (c), and  $h = -p\log|F|$, then $h$ is harmonic and
    $$
    |FW_Z|^p = |F|^pW_Z^p = e^{p\log|F|}e^{pk_Z} = \exp(pk_Z - h).
    $$
Therefore the equivalence of (a) and (c) in Theorem~\ref{theo:zeroset} leads to the following result.

\begin{coro}\label{theo:zeroset2}
Let $0 < p < \infty$, $\om \in B_\infty$ and $Z$ be a sequence in $\D$. Then $Z \in \Zcal(A^p_\om)$ if and only if there exists a harmonic function $h$ such that
    $$
    \int_\D \exp(p k_Z(z) - h(z)) \om(z)\,dA(z) < \infty.
    $$
\end{coro}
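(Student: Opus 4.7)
The corollary is essentially a reformulation of the equivalence (a) $\Leftrightarrow$ (c) of Theorem~\ref{theo:zeroset}, and the computation displayed just before the statement does most of the work. My plan is to verify both directions carefully, since the one not spelled out in the excerpt requires producing a nowhere-zero analytic $F$ from a harmonic $h$.

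For the forward direction, I will apply Theorem~\ref{theo:zeroset}, (a) $\Rightarrow$ (c), to obtain a nowhere-zero $F\in\Hcal(\D)$ with $FW_Z\in L^p_\om$. Because $F$ is nowhere vanishing on the simply connected domain $\D$, the function $\log|F|$ is harmonic on $\D$. Setting $h=-p\log|F|$ and invoking the identity displayed in the excerpt, namely
    $$
    |FW_Z|^p=\exp(pk_Z-h),
    $$
the integrability $FW_Z\in L^p_\om$ becomes exactly $\int_\D \exp(pk_Z(z)-h(z))\om(z)\,dA(z)<\infty$.

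For the reverse direction, assume the harmonic function $h$ exists with the stated integrability. Since $\D$ is simply connected, $-h/p$ admits a harmonic conjugate, so there is $g\in\Hcal(\D)$ with $\re g=-h/p$. Put $F=e^g\in\Hcal(\D)$; then $F$ is nowhere zero and $|F|^p=e^{-h}$, hence
    $$
    |FW_Z(z)|^p=e^{-h(z)}e^{pk_Z(z)}=\exp(pk_Z(z)-h(z)),
    $$
and the hypothesis gives $FW_Z\in L^p_\om$. To apply Theorem~\ref{theo:zeroset}(c) I still need $\sum_{a\in Z}(1-|a|)^2<\infty$. This comes for free: using $|1-\overline{a}z|\le 2$ one has $k_Z(z)\ge \frac{|z|^2}{8}\sum_{a\in Z}(1-|a|^2)^2$ for all $z\in\D$, so if the sum were infinite then $k_Z\equiv+\infty$ on $\D\setminus\{0\}$, and since $\om\in B_\infty$ is strictly positive a.e.\ the integral would diverge, contradicting the hypothesis. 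Consequently $Z\in\Zcal(A^p_\om)$ by the equivalence (c) $\Rightarrow$ (a) of Theorem~\ref{theo:zeroset}.

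The only substantive step is the harmonic-conjugate construction in the reverse direction; everything else is book-keeping with the identity $|FW_Z|^p=\exp(pk_Z-h)$ and the observation that $\sum(1-|a|)^2<\infty$ is automatic from the finiteness of the integral. No genuine obstacle is expected.
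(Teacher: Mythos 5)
Your proof is correct and follows essentially the same route as the paper: the corollary is deduced from the equivalence (a)$\Leftrightarrow$(c) of Theorem~\ref{theo:zeroset} via the identity $|FW_Z|^p=\exp(pk_Z-h)$ with $h=-p\log|F|$. The paper leaves the converse direction implicit, and your harmonic-conjugate construction of $F=e^g$ together with the observation that $\sum_{a\in Z}(1-|a|)^2<\infty$ is forced by the finiteness of the integral (since $\om>0$ a.e.) correctly supplies those routine details.
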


By performing a certain perturbation on a zero set, in this case moving the points closer to the boundary, it becomes a zero set for some other weighted Bergman space. A sequence $(z_n)$ in $\D$ is separated or equivalently uniformly discrete if $\inf_{k\ne n}\r_p(z_k,z_n)=\inf_{k\ne n}|\vp_{z_k}(z_n)|>0$, where $\varphi_a(z) = \frac{a-z}{1-\overline{a}z}$ is the standard automorphism of the unit disc.

\begin{coro}\label{coro:zeroPerturb}
Let $0<p<\infty$ and $\om\in B_\infty$. Let $Z$ be a zero set for $A^p_\om$ such that it is a finite union of separated sequences. Let $0 < \g < 1$ and suppose that there exists another set $Z'$ and a one-to-one correspondence $\s : Z \to Z'$ such that $1-|\s(a)|^2 = \g\left(1-|a|^2\right)$ and $\rho_p(a,\s(a))$ is uniformly bounded away from 1 on $Z$. Then $Z'$ is a zero set for $A^{p/\g}_\om$.
\end{coro}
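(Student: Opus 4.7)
The plan is to verify condition~(c) of Theorem~\ref{theo:zeroset} for the triple $(Z', p/\gamma, \om)$. From $Z \in \Zcal(A^p_\om)$ and part (a)$\Rightarrow$(c) of that theorem, pick a nowhere-vanishing $F \in \Hcal(\D)$ with $FW_Z \in L^p_\om$. Since $F$ has no zeros, $\log F$ admits an analytic branch and $F^\gamma := \exp(\gamma \log F)$ is analytic, nowhere zero, with $|F^\gamma|^{p/\gamma} = |F|^p$. Taking $F' = F^\gamma$ (to be twisted by an analytic exponential correction below), the task reduces to
\[
\int_\D |F(z)|^p\, W_{Z'}(z)^{p/\gamma}\, \om(z)\, dA(z) < \infty,
\]
from which (c)$\Rightarrow$(a) of the same theorem delivers $Z' \in \Zcal(A^{p/\gamma}_\om)$.

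The heart of the matter is to compare $W_{Z'}^{p/\gamma}$ with $W_Z^p$. Using $1-|\sigma(a)|^2 = \gamma(1-|a|^2)$,
\[
\tfrac{p}{\gamma}\, k_{Z'}(z) \;=\; p\gamma \cdot \tfrac{|z|^2}{2} \sum_{a\in Z} \frac{(1-|a|^2)^2}{|1-\overline{\sigma(a)}\,z|^2},
\]
while the hypothesis $\rho_p(a,\sigma(a)) \le c < 1$ furnishes the classical proximity estimate $|1-\overline{\sigma(a)} z| \asymp |1-\bar a z|$ with constants depending only on $c$. The algebraic identity $|\sigma(a)|^2 - \gamma|a|^2 = 1-\gamma$ combined with termwise Poisson integration yields, for every $r \in (0,1)$,
\[
\frac{1}{2\pi}\int_0^{2\pi}\!\!\Bigl(\tfrac{p}{\gamma}k_{Z'}(re^{i\theta}) - p k_Z(re^{i\theta})\Bigr) d\theta = -\tfrac{p(1-\gamma)r^2(1-r^2)}{2}\sum_{a\in Z}\frac{(1-|a|^2)^2}{(1-|\sigma(a)|^2 r^2)(1-|a|^2 r^2)} \le 0,
\]
and one moreover checks that the right-hand side is comparable to $-p(1-\gamma)/(1-r^2)$, so the circular averages of the difference are not just non-positive but strongly negative near $\partial\D$.

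Combining this averaged negativity with the pointwise pseudohyperbolic comparison and with the finite-union-of-separated-sequences hypothesis on $Z$ (whose Carleson-measure property bounds both $k_Z$ and $k_{Z'}$ by a constant multiple of $\log\frac{1}{1-|z|^2}$ via a standard reproducing-kernel argument), I would produce a harmonic function $u$ on $\D$ with $u(z) \ge \tfrac{p}{\gamma}k_{Z'}(z) - p k_Z(z)$ pointwise. Setting $F' = F^\gamma \exp(-\gamma(u + i\widetilde u)/p)$, with $\widetilde u$ a harmonic conjugate, then gives an analytic nowhere-zero function with $|F'|^{p/\gamma} W_{Z'}^{p/\gamma} \le |F|^p W_Z^p$ pointwise, so the required integrability follows from $FW_Z \in L^p_\om$.

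The principal obstacle I anticipate is building the harmonic majorant $u$: the non-positivity of the circular averages is not on its own enough (the difference $\tfrac{p}{\gamma}k_{Z'} - pk_Z$ is a difference of subharmonics, hence not subharmonic, and can take large positive values pointwise near points of $Z'$), so the separation of $Z$ and the pointwise pseudohyperbolic estimates must be leveraged to produce a harmonic $u$ both dominating this difference and keeping the induced weight $e^{-u}$ compatible with integrability against $\om\in B_\infty$. Once that construction is in place, the rest of the argument is bookkeeping.
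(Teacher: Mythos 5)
Your overall strategy is the same as the paper's: the paper deduces the corollary from Corollary~\ref{theo:zeroset2} by taking the harmonic $h$ with $\exp(pk_Z-h)\in L^1_\om$ and observing that it suffices to produce a harmonic majorant $g$ of $k_{Z'}/\g-k_Z$, since then $pk_{Z'}/\g-(pg+h)\le pk_Z-h$. Your reformulation through Theorem~\ref{theo:zeroset}(c), with $F'=F^\g\exp(-\g(u+i\widetilde u)/p)$, is just a rephrasing of that reduction, and your circular-average identity (using $|\s(a)|^2-\g|a|^2=1-\g$) is correct. The problem is that the entire content of the corollary lies in the step you defer: constructing the harmonic majorant $u\ge \frac{p}{\g}k_{Z'}-pk_Z$. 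This is exactly where the hypotheses that $Z$ is a finite union of separated sequences, that $1-|\s(a)|^2=\g(1-|a|^2)$, and that $\rho_p(a,\s(a))$ stays away from $1$ enter, and the paper does not reprove it either: it invokes the explicit construction carried out in the proof of \cite[Theorem~4]{L2000}. Without that construction, or a citation to it, your argument is incomplete precisely at its crux.

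Moreover, the route you sketch for filling the gap would not work as described. First, the Carleson-type bound $k_Z(z)+k_{Z'}(z)\lesssim \log\frac{1}{1-|z|^2}$ (valid for finite unions of separated sequences) cannot by itself produce a harmonic majorant, because $\log\frac{1}{1-|z|^2}$ admits no harmonic majorant on $\D$: its circular means are unbounded, so any harmonic dominant would have infinite value at the origin. Second, your quantitative claim that the circular averages of $\frac{p}{\g}k_{Z'}-pk_Z$ are comparable to $-p(1-\g)/(1-r^2)$ is false in general: with $Z$ a finite union of separated sequences the sum $\sum_{a\in Z}\frac{(1-|a|^2)^2}{(1-|\s(a)|^2r^2)(1-|a|^2r^2)}$ is $O\bigl((1-r)^{-1}\bigr)$, so after multiplication by $(1-r^2)$ the averages are merely bounded (and tend to $0$ when $Z$ is finite). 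Hence there is no ``strong negativity'' of that order to exploit; the actual construction in \cite{L2000} works with the termwise cancellation between $\g|1-\overline{\s(a)}z|^{-2}$ and $|1-\overline{a}z|^{-2}$, using the separation and the bounded pseudohyperbolic displacement locally near each point. You should either reproduce that construction or cite it; as written, the proof has a genuine gap and an incorrect auxiliary assertion.
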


To deduce Corollary~\ref{coro:zeroPerturb}, note first that since $Z$ is a zero set for $A^p_\om$ by the hypothesis, there exists a harmonic function $h$ such that $\exp(pk_Z-h)$ is integrable with respect to $\om dA$ by Corollary~\ref{theo:zeroset2}. Therefore it suffices to find a harmonic majorant $g$ of $k_{Z'}/\g - k_Z$. Indeed, if such $g$ exists, then $pk_{Z'}/\g-(pg+h) \le pk_Z - h$, and hence $\exp(\frac{p}{\g}k_{Z'}-(pg+h))$ is integrable with respect to $\om dA$, and consequently $Z'$ is a zero set for $A^{p/\g}_\om$ by Corollary~\ref{theo:zeroset2}. A function $g$ with desired properties is constructed in the proof of~\cite[Theorem 4]{L2000}.

Horowitz~\cite{Horzeros,HorFacto} also obtained some results in the spirit of Corollary~\ref{coro:zeroPerturb} describing how the zero sets depend on the parameter $p$. Some of those results were generalized for certain $A^p_\om$ in~\cite{PR2014Memoirs}, and can be further improved by applying Luecking's approach in studying zero sets. In particular, we note that, by applying Proposition~\ref{prop:zeroseq} below instead of~\cite[Lemma~3]{HorFacto}, Theorem~4 and Corollary~2 in~\cite{HorFacto} can be generalized to the case $\om\in B_\infty$ with only minor modifications to the original proofs.

We now turn to consider factorization of functions in $A^p_\om$. By refining Horowitz' original probabilistic argument by Luecking's method to study the zero sets, and then adopting the whole reasoning to the class of weights we are interested in we derive the following factorization result.

\begin{theo}\label{theo:factorization}
Let $0<p<\infty$ and $\om\in B_\infty$ such that the polynomials are dense in $A^p_\om$. Let $f\in A^p_\om$ and $0<p_1,p_2<\infty$ such that $p^{-1}=p_1^{-1}+p_2^{-1}$. Then there exist $f_1\in A^{p_1}_\om$ and $f_2\in A^{p_2}_\om$ such that $f=f_1f_2$ and
    $$
    \|f_1\|_{A^{p_1}_\om}^{p}\|f_2\|_{A^{p_2}_\om}^{p}
    \le\frac{p}{p_1}\|f_1\|_{A^{p_1}_\om}^{p_1}+\frac{p}{p_2}\|f_2\|_{A^{p_2}_\om}^{p_2}
    \le C\|f\|_{A^p_\om}^p\le C\|f_1\|_{A^{p_1}_\om}^{p}\|f_2\|_{A^{p_2}_\om}^{p}
    $$
for some constant $C=C(\om)>0$.
\end{theo}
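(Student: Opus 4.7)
The outer inequalities in the claim are essentially for free: the weighted arithmetic-geometric mean with weights $p/p_1$ and $p/p_2$ (which sum to $1$ by hypothesis) gives the leftmost inequality, while H\"older with exponents $p_1/p,p_2/p$ applied to $f=f_1f_2$ gives $\|f\|_{A^p_\om}^p\le\|f_1\|_{A^{p_1}_\om}^p\|f_2\|_{A^{p_2}_\om}^p$. The substance is constructing a factorization $f=f_1f_2$ satisfying the middle inequality
$$
\tfrac{p}{p_1}\|f_1\|_{A^{p_1}_\om}^{p_1}+\tfrac{p}{p_2}\|f_2\|_{A^{p_2}_\om}^{p_2}\le C\|f\|_{A^p_\om}^p.
$$

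The plan is to carry out Horowitz' probabilistic splitting of zeros, with Luecking's canonical product $\psi_Z$ and the isomorphism of Theorem~\ref{theo:zeroset} playing the role of the Bergman-canonical factorization in Horowitz' original argument. Using density of the polynomials in $A^p_\om$, one first reduces to the case where $f$ has a finite zero set~$Z$, and then approximates. For such $f$, Theorem~\ref{theo:zeroset} yields $f=\psi_Z F$ with $F\in\Hcal(\D)$ nowhere vanishing and $\|FW_Z\|_{L^p_\om}^p\asymp\|f\|_{A^p_\om}^p$. Attach to each $a\in Z$ an independent Bernoulli variable $X_a$ with $P(X_a=1)=p/p_1$, set $Z_1=\{a\in Z:X_a=1\}$, $Z_2=Z\setminus Z_1$, and define
$$
f_1:=\psi_{Z_1}F^{p/p_1},\qquad f_2:=\psi_{Z_2}F^{p/p_2}.
$$
The branches $F^{p/p_1},F^{p/p_2}$ exist because $F$ is zero-free on the simply connected disc, and the relation $p^{-1}=p_1^{-1}+p_2^{-1}$ forces $f_1f_2=f$ deterministically.

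By Fubini and the independence of $\{X_a\}$,
$$
\mathbb{E}\|f_i\|_{A^{p_i}_\om}^{p_i}=\int_\D|F(z)|^p\prod_{a\in Z}\l[\tfrac{p}{p_i}|E_a(z)|^{p_i}+\tfrac{p}{p_{3-i}}\ri]\om(z)\,dA(z),\qquad i=1,2,
$$
where $E_a$ is the convergence factor of $\psi_Z$ attached to~$a$. The task reduces to the pointwise estimate
$$
\prod_{a\in Z}\l[\tfrac{p}{p_i}|E_a(z)|^{p_i}+\tfrac{p}{p_{3-i}}\ri]\le CW_Z(z)^p,\qquad z\in\D,\ i=1,2,
$$
with $C$ independent of $Z$ and~$z$. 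Granting it, each expectation is dominated by $C\|FW_Z\|_{L^p_\om}^p\asymp C\|f\|_{A^p_\om}^p$, so
$$
\mathbb{E}\l[\tfrac{p}{p_1}\|f_1\|_{A^{p_1}_\om}^{p_1}+\tfrac{p}{p_2}\|f_2\|_{A^{p_2}_\om}^{p_2}\ri]\le C\|f\|_{A^p_\om}^p,
$$
and Markov's principle picks a realization of $\{X_a\}$ producing a deterministic factorization with the desired norm control. The original $f\in A^p_\om$ is recovered from the polynomial case by a normal family argument in $A^{p_1}_\om$ and $A^{p_2}_\om$.

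The principal obstacle is the pointwise product estimate. Logarithmically it reads $\sum_a\log(\tfrac{p}{p_i}|E_a(z)|^{p_i}+\tfrac{p}{p_{3-i}})\le pk_Z(z)+O(1)$ uniformly in $z$ and~$Z$. Contributions from zeros with $|E_a(z)|\le1$ are non-positive and harmless; contributions from zeros with $|E_a(z)|>1$ admit the crude upper bound $p_i\log|E_a(z)|$, and must be reconciled with the target $pc_a(z)+O(1)$, where $c_a(z)=\tfrac{|z|^2(1-|a|^2)^2}{2|1-\bar az|^2}$, by invoking Luecking-type inequalities that compare $\log|E_a(z)|$ with $c_a(z)$ up to summable remainders. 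Balancing these against the Bernoulli moments at the chosen parameter $\theta_1=p/p_1$ is the analytic heart of the argument. Notably, the $B_\infty$ hypothesis does not enter this combinatorial step directly; it is felt only through Theorem~\ref{theo:zeroset} and the attendant norm equivalence, and through the density of polynomials used in the reduction.
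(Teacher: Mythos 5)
Your reduction of the outer inequalities and the general Horowitz-type probabilistic scheme are fine, but the construction you chose makes the ``analytic heart'' not just unproven but false. With $f=\psi_ZF$ and $f_1=\psi_{Z_1}F^{p/p_1}$, $f_2=\psi_{Z_2}F^{p/p_2}$, your expectation identity is correct, and the argument indeed hinges on
$\prod_{a\in Z}\bigl[\tfrac{p}{p_i}|E_a(z)|^{p_i}+\tfrac{p}{p_{3-i}}\bigr]\le CW_Z(z)^p$
with $C$ independent of $Z$ and $z$. This fails. Write $|E_a(z)|=u_av_ae^{c_a(z)}$ with $u_a=|a|e^{(1-|a|^2)/2}\le1$, $v_a=|\vp_a(z)|e^{(1-|\vp_a(z)|^2)/2}\le1$ and $c_a(z)=\frac{|z|^2(1-|a|^2)^2}{2|1-\overline{a}z|^2}$. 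Take $a,z$ real positive with $1-|z|\ll1-|a|\ll1$; then $u_a,v_a\to1$ while $c_a(z)\to2$ (this is the configuration $\overline{a}\vp_a(z)\approx-1$, where $|E_a|\approx e^2$). For such $z$ the single-factor ratio tends to $\bigl(\tfrac{p}{p_i}e^{2p_i}+\tfrac{p}{p_{3-i}}\bigr)e^{-2p}$, which is strictly greater than $1$ by strict convexity of $t\mapsto e^{2t}$ and $\tfrac{p}{p_1}+\tfrac{p}{p_2}=1$. Hence a zero of multiplicity $N$ at such a point $a$ makes the left-hand side exceed $CW_Z(z)^p$ by a factor growing geometrically in $N$, so no uniform $C$ exists; and since $F$ may be chosen to concentrate its $\om$-mass near that $z$, even the integrated version $\int|F|^p\prod_a[\cdot]\,\om\,dA\lesssim\int|F|^pW_Z^p\om\,dA$ fails with a constant independent of $f$. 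The defect is structural: the kept zeros carry the convergence factors $\exp(1-\overline{a}\vp_a)$ raised to the power $p_i>p$, and their growth $e^{c_a}$ (with $c_a$ up to $2$) is not compensated by anything in your splitting, so ``Luecking-type inequalities comparing $\log|E_a|$ with $c_a$ up to summable remainders'' cannot close the gap — the deficit per zero is of size $(p_i-p)c_a$, not summable.

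The paper avoids this by following Horowitz's original construction rather than splitting Luecking's product: the zeros assigned to the other factor are \emph{divided out} of $f^{p/p_i}$, so the expectation produces Horowitz's auxiliary function $g(z)=|f(z)|^p\prod_{z_k\in\Zcal(f)}\frac{1-\frac pq+\frac pq|\vp_{z_k}(z)|^q}{|\vp_{z_k}(z)|^p}$, in which only bounded Blaschke-type quotients appear. The key step is then the pointwise bound $g\le h^p$, proved via Jensen's formula applied to $f\circ\vp_z$ and the passage from the unit-mass measure $d\s$ to $dA$, after which Proposition~\ref{prop:zeroseq} gives $\|g\|_{L^1_\om}\le C(\om)\|f\|^p_{A^p_\om}$; the probabilistic selection is then carried out as in Horowitz and \cite[Theorem~3.1]{PR2014Memoirs} for functions with finitely many zeros, and the general case follows by density of polynomials together with a normal-families argument, which the paper justifies from $\om\in B_x$ (your closing appeal to ``a normal family argument'' also needs this local uniform boundedness, but that is a minor omission next to the main gap). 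If you want to keep your $\psi_Z$-based formulation, you must redesign $f_1,f_2$ so that the randomly discarded zeros appear as divided-out factors, reproducing $g$ rather than $\mathbb{E}|\psi_{Z_i}|^{p_i}$.
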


The density of polynomials is needed as a hypothesis only to guarantee the existence of a dense family of functions with finitely many zeros. Any other requirement implying this property would suffice here. The question of when polynomials are dense in $A^p_\om$ is an old problem and remains unsolved in general, see for example \cite[Section~1.5]{PR2014Memoirs} for basic information and relevant references. However, in certain special cases the density of polynomials can be deduced from an atomic decomposition. For example, \cite[Theorem~4.1]{L1985/2} offers such a decomposition for functions in certain weighted Bergman spaces induced by non-radial weights in terms of the kernel functions of the standard weighted Bergman spaces. Now that these kernels are analytic beyond the boundary of the unit disc, this in turn yields the density of polynomials. Another relevant reference regarding atomic decomposition in the non-radial case is \cite{Constantin}. The argument used in these studies does not work for the class $B_\infty$, and that is understandable because $B_\infty$ contains weights that induce very small Bergman spaces.

We next discuss a specific step in the proofs of the results above and then turn to consider zeros and factorization for $A^p_\om$ induced by a radial weight. The key ingredient in the proofs of Theorems~\ref{theo:zeroset} and~\ref{theo:factorization} is Proposition~\ref{prop:zeroseq} which states that
    $$
    h(z) = \frac{|f(z)|}
        {\prod_{a \in Z} \left\{\left|\frac{a-z}{1-\overline{a}z}\right|
        \exp\left[\frac12\left(1-\left|\frac{a-z}{1-\overline{a}z}\right|^2\right)\right]\right\}},
    $$
induced by $Z\subset \mathcal{Z}(f)$, satisfies $\|f\|_{A^p_\om}\asymp \|h\|_{L^p_\om}$ for each $f \in A^p_\om$. The proof of this fact eventually boils down to showing that
    $$
    R(f)(z)= \int_\D f(w)\frac{\left(1-|z|^2\right)^2}{|1-\overline{z}w|^4}\,dA(w),\quad z\in\D,
    $$
is a bounded operator from $L^q_\om$ into itself for some $q>1$. This is in turn equivalent to the boundedness of the Bergman projection
    $$
    P_\a(f)(z) = \int_\D \frac{f(w)}{(1-z\overline{w})^{2+\a}} \left(1-|w|^2\right)^\a\,dA(w),\quad z\in\D,
    $$
for $\alpha=2$ on certain $L^q$-spaces, and therefore this step is done at once by using the classical characterization of the one-weight inequality for the Bergman projection by Bekoll\'e and Bonami~\cite{B1981,BB1978}. This yields the hypothesis $\om\in B_\infty$ in Theorems~\ref{theo:zeroset} and \ref{theo:factorization}, the proofs of which are presented in Section~\ref{sec:ProofThm1}.

The defect in the hypothesis $\om\in B_\infty$ is that it does not allow $\om$ to vanish in a set of positive measure, neither $\om$ may be small in a relatively large part of each outer annulus of~$\D$. Our next goal is to show that by restricting our consideration to radial weights we can do better and no longer need to require such smoothness. To do this, let $\om$ be a radial weight such that $\widehat{\om}(z)=\int_{|z|}^1\om(s)\,ds>0$ for all $z\in\D$, for otherwise $A^p_\om=\Hcal(\D)$. A radial weight $\om$ belongs to the class~$\DD$ if there exists a constant $C=C(\om)\ge1$ such that the doubling inequality $\widehat{\om}(r)\le
C\widehat{\om}(\frac{1+r}{2})$ is valid for all $0\le r <1$. If there exist $K=K(\om)>1$ and $C=C(\om)>1$ such that $\widehat{\om}(r)\ge C\widehat{\om}\left(1-\frac{1-r}{K}\right)$ for all $0\le r<1$, then $\om\in\Dd$. 
Additionally, we write $\DDD=\DD\cap\Dd$. The classes of weights $\DD$ and $\DDD$ emerge from fundamental questions in operator theory: recently Pel\'aez and R\"atty\"a~\cite{PR2017Proj} showed that the weighted Bergman projection $P_\om$, induced by a radial weight $\om$, is bounded from $L^\infty$ to the Bloch space $\mathcal{B} = \{f \in \mathcal{H}(\D) : \sup_{z\in\D} |f'(z)|(1-|z|) < \infty\}$ if and only if $\om \in \DD$, and further, it is bounded and onto if and only if $\om \in \DDD$.
For further information on these classes of weights, see~\cite{PelSum14,PR2014Memoirs,PR2015Embedding,PR2016TwoWeight}. Moreover, since weights in $\DD$ may very well vanish in a set of positive measure, $\DD$ is not contained in $B_\infty$.

Assume $f\in A^p_\omega$, where $\om\in\DD$. Then
    \begin{equation*}
    \begin{split}
    \|f\|_{A^p_\omega}^p&\ge\int_{\D\setminus
    D\left(0,\frac{1+r}{2}\right)}|f(z)|^p\omega(z)\,dA(z)
    \gtrsim
    M_p^p\left(\frac{1+r}{2},f\right)\widehat{\om}\left(r\right),\quad r\to1^-,
    \end{split}
    \end{equation*}
from which the well known inequality $M_\infty(r,f)\lesssim
M_p(\frac{1+r}{2},f)(1-r)^{-1/p}$ yields
    \begin{equation*}
    M^p_\infty(r,f)\lesssim\frac{\|f\|_{A^p_\omega}^p}{(1-r)\widehat{\om}\left(r\right)},\quad r\to1^-.
    \end{equation*}
Now that $\om\in\DD$, there exist $C=C(\om)>0$ and $\b=\b(\om)>0$ such that $\widehat{\om}\left(r\right)\ge C\widehat{\om}(0)\left(1-r\right)^\b$ by Lemma~\ref{Lemma:weights-in-D-hat}(ii) below. Hence
    $
    |f(z)|\lesssim\|f\|_{A^p_\omega}(1-|z|)^{-\frac{\b+1}{p}}
    $
for all $z\in\D$, and it follows that $\log|f|\in L^1$. Therefore the reasoning to be used in the proof of Proposition~\ref{prop:zeroseq} can be applied in this case also. However, as explained above, the argument relies on the characterization of the one-weight inequality for the Bergman projection by Bekoll\'e and Bonami, which guarantees the boundedness of the auxiliary operator $R:L^q_\om\to L^q_\om$ under the hypothesis $\om\in B_q\subset B_\infty$. But what is actually needed here is to show that
    \begin{equation}\label{eq:step-radial-case}
    \int_\D\left(\int_\D |f(\z)|^\frac{p}{q}\frac{(1-|z|^2)^2}{|1-\overline{z}\zeta|^4}\,dA(\z)\right)^q\om(z)\,dA(z)\lesssim\|f\|_{A^p_\om}^p,\quad f\in A^p_\om,
    \end{equation}
for a sufficiently large $q=q(\om)>p$. The functions involved being analytic, this inequality is better controllable by using Carleson embedding theorems~\cite{PR2015Embedding} rather than weight inequalities for $L^p$-functions~\cite{B1981,BB1978}. By using this approach we will prove the statement of Proposition~\ref{prop:zeroseq} for $\om\in\DD$, and deduce the following result.

\begin{theo}\label{Theorem:D-hat}
The statements in Theorems~\ref{theo:zeroset} and~\ref{theo:factorization} and Corollaries~\ref{theo:zeroset2} and~\ref{coro:zeroPerturb} are valid when the hypothesis $\om\in B_\infty$ is replaced by $\om\in\DD$.
\end{theo}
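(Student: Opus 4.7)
The plan is to reprove Proposition~\ref{prop:zeroseq} under the weaker hypothesis $\om\in\DD$, since this is, as the authors indicate, the only step in the proofs of Theorems~\ref{theo:zeroset} and~\ref{theo:factorization} where $\om\in B_\infty$ enters in an essential way. Once the norm equivalence $\|f\|_{A^p_\om}\asymp\|h\|_{L^p_\om}$ asserted by Proposition~\ref{prop:zeroseq} is available in the $\DD$ setting, the remainder of each argument --- the equivalence of (a)--(d) in Theorem~\ref{theo:zeroset}, the continuous isomorphism $f\mapsto f/\psi_Z$, and the Horowitz-type factorization of Theorem~\ref{theo:factorization} --- transfers essentially verbatim from the $B_\infty$ case, since these steps use only the norm equivalence, basic analytic machinery (Jensen's inequality, $\log|f|\in L^1$), and, for Theorem~\ref{theo:factorization}, a dense subfamily of $A^p_\om$ consisting of functions with finitely many zeros. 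The integrability $\log|f|\in L^1$ is granted by the pointwise estimate $|f(z)|\lesssim\|f\|_{A^p_\om}(1-|z|)^{-(\b+1)/p}$ derived in the excerpt via Lemma~\ref{Lemma:weights-in-D-hat}(ii), so the Jensen step requires no modification.

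To establish Proposition~\ref{prop:zeroseq} for $\om\in\DD$, we mimic the $B_\infty$ scheme, replacing the Bekoll\'e--Bonami one-weight inequality for the Bergman projection $P_2$ by an analytic Carleson embedding. It suffices to prove \eqref{eq:step-radial-case} for some $q=q(\om)>p$. Writing $g=|f|^{p/q}$, the inner integral is a pseudohyperbolic average of the subharmonic function~$g$ and is therefore controlled by a maximal-type localisation of $g$ on Carleson squares. Raising to the $q$-th power and integrating against $\om\,dA$ recasts \eqref{eq:step-radial-case} as the statement that the measure built from the Forelli--Rudin type kernel $(1-|z|^2)^2|1-\overline z\zeta|^{-4}$ against $\om\,dA$ is a $q$-Carleson measure for $A^p_\om$, which is precisely of the form characterised for $\DD$ weights in the Carleson embedding theorems of~\cite{PR2015Embedding}. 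The doubling hypothesis $\widehat{\om}(r)\le C\widehat{\om}((1+r)/2)$ is the structural ingredient that makes the relevant test quantity on a Carleson square $S(z)$ comparable to $\widehat{\om}(z)(1-|z|)$, thereby closing the embedding estimate.

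The main obstacle will be the calibration of $q>p$: it must be large enough that the analytic Carleson-embedding test is satisfied with constants depending only on the doubling constants of $\widehat{\om}$, yet small enough that the resulting $L^{p/q}$-theory produces the required two-sided bound at scale~$p$, uniformly in $f$ and in the underlying zero set~$Z$. Once this calibration is fixed, the endgame of Proposition~\ref{prop:zeroseq} --- expressing $\log|f/\psi_Z|$ in terms of the harmonic majorant implicit in $k_Z$ and invoking the $L^{p/q}_\om$-theory --- proceeds exactly as in the $B_\infty$ case.

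Finally, Corollaries~\ref{theo:zeroset2} and~\ref{coro:zeroPerturb} become purely formal consequences: the former is obtained by setting $h=-p\log|F|$ in the $\DD$-version of the equivalence (a)$\Leftrightarrow$(c) of Theorem~\ref{theo:zeroset}, and the latter follows by combining Corollary~\ref{theo:zeroset2} with the harmonic-majorant construction of~\cite[Theorem~4]{L2000}, exactly as explained in the discussion after Corollary~\ref{coro:zeroPerturb}. Neither step reintroduces $B_\infty$, so the full statement of Theorem~\ref{Theorem:D-hat} follows.
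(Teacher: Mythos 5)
Your overall architecture is the right one, and it coincides with the strategy the paper itself announces: the only place where $\om\in B_\infty$ enters essentially is Proposition~\ref{prop:zeroseq}, whose proof reduces to the inequality \eqref{eq:step-radial-case}, and once that inequality is available for $\om\in\DD$ the proofs of Theorems~\ref{theo:zeroset} and~\ref{theo:factorization} and of Corollaries~\ref{theo:zeroset2} and~\ref{coro:zeroPerturb} transfer unchanged (and $\log|f|\in L^1$ does follow from the pointwise growth estimate via Lemma~\ref{Lemma:weights-in-D-hat}(ii)). The problem is that you never actually prove \eqref{eq:step-radial-case}, and the mechanism you sketch would not deliver it. The inner integral is not a pseudohyperbolic average of $|f|^{p/q}$ but a global integral against the kernel $(1-|z|^2)^2|1-\overline{z}\z|^{-4}$; if you dominate it by a maximal-type average over Carleson squares, closing the estimate requires the boundedness of that maximal operator on $L^q_\om$, which is a Muckenhoupt/Bekoll\'e-type condition on $\om$ --- precisely the hypothesis one must avoid, since weights in $\DD$ may vanish on sets of positive measure. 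Likewise, raising to the power $q$ and integrating does not ``recast'' \eqref{eq:step-radial-case} as a $q$-Carleson measure statement: the quantity raised to the $q$th power is $R\bigl(|f|^{p/q}\bigr)(z)$, not the modulus of an analytic function, so no embedding theorem from \cite{PR2015Embedding} applies to it directly.

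What is needed, and what the paper does, is a Schur-type splitting before any embedding theorem enters: for $q\ge2$ write the kernel as $\frac{k(\z)}{|1-\overline{z}\z|^2}\cdot\frac{k(\z)^{-1}}{|1-\overline{z}\z|^2}$ with $k(\z)=(1-|\z|)^{\e}$ and $\e<1-1/q$, apply H\"older's inequality with exponents $q$ and $q'$ inside the inner integral, evaluate the $k^{-q'}$-integral by Forelli--Rudin estimates, and use Fubini's theorem. This bounds the left-hand side of \eqref{eq:step-radial-case} by $\|f\|_{L^p_\mu}^p$ for an explicit radial measure $\mu$ built from $\om$, so the question becomes whether $\mu$ is a $p$-Carleson (not $q$-Carleson) measure for $A^p_\om$, i.e.\ whether $\mu(S)\lesssim\om(S)$ by \cite[Theorem~1]{PR2015Embedding}; this is then verified by splitting the radial integrals defining $\mu(S(a))$ and invoking Lemma~\ref{Lemma:weights-in-D-hat}(iii), after choosing $\e q$ sufficiently large. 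You explicitly flag the ``calibration of $q$'' as the main obstacle but leave it unresolved; that calibration, together with the auxiliary-weight H\"older argument and the verification $\mu(S(a))\lesssim\om(S(a))$, is the substance of the proof, so as it stands your proposal has a genuine gap at its central step.
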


Details of the deduction yielding this theorem are given in Section~\ref{Sec:radial-zeros-factorization}. The reason why this approach does not yield a better result in the non-radial case is that known Carleson embedding theorems impose growth and/or smoothness restrictions to the weight. In particular, \cite[Theorem~3.1]{Constantin2} states that if $P^+_\eta:L^q_\om\to L^q_\om$ is bounded for some $q>1$ and $\eta>-1$, then $A^p_\om$ is continuously embedded into $L^p_\mu$ if and only if $\mu(\Delta(z,r))\lesssim\om(\Delta(z,r))$ for all $z\in\D$. Each radial weight $(1-|z|)^{-1}\left(\log\frac{e}{1-|z|}\right)^{-\b}$ with $\b>1$ belongs to $B_\infty$, but does not satisfy the Bekoll\'e-Bonami condition, and thus this approach does not give us anything better than what we know already. This and many other possible applications, some of which will appear later in this paper, suggest that it would be desirable to obtain new information on the embedding $A^p_\om\subset L^p_\mu$ when $\om$ is non-radial and $\mu$ is a positive Borel measure on $\D$. In the radial case the hypothesis on the density of polynomials in Theorem~\ref{theo:factorization} is of course always satisfied and can thus be omitted.

The statement in Theorem~\ref{theo:factorization} for $\om\in B_\infty\cup\DD$ significantly improves the main result in \cite[Chapter~3]{PR2014Memoirs} and Horowitz' original result as well because now the factorization is available for the whole class $\DD$ and the constant $C$ appearing in the inequality for the norms is independent of the parameters $p$, $p_1$ and $p_2$.

We mention three immediate consequences of our results so far. The factorization given in Theorem~\ref{Theorem:D-hat} shows that the statement in \cite[Theorem~4.1(iv)]{PR2014Memoirs} concerning the integration operator $T_g(f)(z)=\int_0^z f(\z)g'(\z)\,d\z$ induced by $g\in\mathcal{H}(\D)$ is valid for $\om\in\DD$. The theorem states that $T_g:A^p_\om\to A^q_\om$ is bounded in the upper triangular case $0<q<p<\infty$ if and only if $g\in A^s_\om$ with $\frac1s=\frac1q-\frac1p$. Also the Aleman-Sundberg question, discussed in more detail in \cite[p.~38]{PR2014Memoirs}, on subsets of zero-sets has an affirmative answer when $\om\in\DD$ by Theorem~\ref{theo:zeroset} because in this case each subset of a zero set is always a zero set. Moreover, the statement in \cite[Proposition~7.5]{PelSum14} concerning bounded and compact composition operators acting between different weighted Bergman spaces is valid under the hypotheses $\om\in B_\infty\cup\DD$ and the density of polynomials in $A^p_\om$ by Theorems~\ref{theo:factorization} and~\ref{Theorem:D-hat}. The proposition says that if $0<p,q<\infty$, $n\in\N$ and $\nu$ is any weight, then the composition operator $C_\vp$, defined by $C_\vp(f)=f\circ\vp$, is bounded (resp.compact) from $A^p_\om$ to $A^q_\nu$ if and only if $C_\vp:A^{np}_\om\to A^{nq}_\nu$ is bounded (resp.compact). Therefore one may assume that the both parameters $p$ and $q$ are greater than two and this often simplifies some arguments.

To finish our consideration of zeros and factorization, we give an application of the obtained factorization in the study of the small Hankel operator induced by a conjugate analytic symbol in the upper triangular case. Let $\om$ be a weight such that the reproducing kernels $B^\om_z$ of $A^2_\om$ exist, that is, $f(z)=\langle f,B^\om_z\rangle_{A^2_\om}$ for all $z\in\D$ and $f\in A^2_\om$. For $f\in\Hcal(\D)$ consider the small Hankel operator
    $$
    h^\om_{\overline{f}}(g)(z)=\overline{P_\om}(\overline{f}g)=\int_\D\overline{f(\z)}g(\z)B^\om_z(\z)\om(\z)\,dA(\z),\quad z\in\D.
    $$

\begin{coro}\label{corollary:Hankel}
Let $1<q<p<\infty$ and $1<s<\infty$ such that $\frac1s=\frac1q-\frac1p$, and let $\om\in\DD$. Then $h^\om_{\overline{f}}:A^p_\om\to\overline{A^q_\om}$ is bounded if and only if $f\in A^s_\om$.
\end{coro}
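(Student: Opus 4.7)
The plan is to reduce both implications to boundedness of the trilinear form
$$
B(f,g,\psi)=\int_\D\overline{f(z)}\,g(z)\,\psi(z)\,\om(z)\,dA(z),\qquad g\in A^p_\om,\ \psi\in A^{q'}_\om,
$$
with $q'=q/(q-1)$. Unwinding the definition of $h^\om_{\overline{f}}$ by Fubini, together with the reproducing formula $\int_\D B^\om_z(\zeta)\psi(z)\om(z)\,dA(z)=\psi(\zeta)$, yields the identity
$$
\int_\D h^\om_{\overline{f}}(g)(z)\,\psi(z)\,\om(z)\,dA(z)=B(f,g,\psi)
$$
for polynomials $g,\psi$. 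Under the pairing $(\phi,\psi)\mapsto\int_\D\phi\psi\,\om\,dA$, the space $\overline{A^{q'}_\om}$ is the dual of $\overline{A^q_\om}$, which is a standard consequence of the identification $(A^q_\om)^*\simeq A^{q'}_\om$ known for radial $\om\in\DD$ in the literature on doubling weights (e.g.~\cite{PR2014Memoirs}). Hence the boundedness of $h^\om_{\overline{f}}:A^p_\om\to\overline{A^q_\om}$ is equivalent to the existence of $C>0$ with $|B(f,g,\psi)|\le C\|g\|_{A^p_\om}\|\psi\|_{A^{q'}_\om}$ on a dense subfamily of pairs $(g,\psi)$.

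For the sufficiency, the hypothesis $\frac1s=\frac1q-\frac1p$ is equivalent to $\frac1s+\frac1p+\frac1{q'}=1$, so a direct trilinear H\"older gives
$$
|B(f,g,\psi)|\le\|f\|_{A^s_\om}\|g\|_{A^p_\om}\|\psi\|_{A^{q'}_\om}
$$
whenever $f\in A^s_\om$, which proves one direction at once.

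For the necessity, taking conjugate exponents rewrites the hypothesis as $\frac1{s'}=\frac1p+\frac1{q'}$, which is exactly the relation required by the factorization of Theorem~\ref{Theorem:D-hat}. Since polynomials are dense in $A^{s'}_\om$, every polynomial $h$ factors as $h=g\psi$ with $g\in A^p_\om$, $\psi\in A^{q'}_\om$ and $\|g\|_{A^p_\om}\|\psi\|_{A^{q'}_\om}\lesssim\|h\|_{A^{s'}_\om}$. Combining this with the displayed identity and the assumed boundedness of $h^\om_{\overline{f}}$ gives
$$
\left|\int_\D\overline{f(z)}\,h(z)\,\om(z)\,dA(z)\right|=|B(f,g,\psi)|\lesssim\|h^\om_{\overline{f}}\|\,\|h\|_{A^{s'}_\om},
$$
so $h\mapsto\int_\D\overline{f}\,h\,\om\,dA$ is a bounded linear functional on a dense subspace of $A^{s'}_\om$; the duality $(A^{s'}_\om)^*\simeq A^s_\om$ then delivers $f\in A^s_\om$ with the desired norm bound, using the a priori inclusion $f\in A^q_\om$ obtained by testing $h^\om_{\overline{f}}$ against the constant function $1\in A^p_\om$ to make the pairing legitimate. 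The genuinely new input is the factorization of Theorem~\ref{Theorem:D-hat}; the only delicate point is the dual identifications for $\om\in\DD$ which, unlike in the $\DDD$ case, need not follow from $L^q_\om$-boundedness of $P_\om$, but these are already established for radial doubling weights and therefore pose no real obstacle.
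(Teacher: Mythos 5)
Your proposal is correct and follows essentially the same route as the paper: reduce boundedness of $h^\om_{\overline{f}}$ via the $A^2_\om$-duality to a bilinear (in your formulation trilinear) estimate, obtain sufficiency by H\"older with $\frac1s+\frac1p+\frac1{q'}=1$, and obtain necessity by factoring $A^{s'}_\om$-functions through $A^p_\om\cdot \overline{A^{q'}_\om}$ using Theorem~\ref{Theorem:D-hat} and then invoking $(A^{s'}_\om)^*\simeq A^s_\om$. The only cosmetic differences are that the paper implements the pairing as $\lim_{r\to1^-}\int_\D f_r\overline{g_r}\om\,dA$ rather than via polynomials, and it relies on the same cited duality results for doubling weights that you appeal to.
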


This corollary combined with our earlier observation on $T_g$ acting from $A^p_\om$ to $A^q_\om$ confirms the well known phenomenon that in many cases the boundedness of the integration operator and the small Hankel operator are characterized by the same condition.
Corollary~\ref{corollary:Hankel} extends the results of Pau and Zhao~\cite{PauZhao} to the case of Bergman spaces of one complex variable induced by doubling weights.

Hankel, integration and composition operators are among the most studied objects in operator theory of analytic function spaces and the literature concerning the subject is vast. Since none of these operators is in the main focus of the present paper, we invite the reader to see \cite{CowenMac95,Shapiro93} for the theory of composition operators, and \cite{PR2016Toeplitz} for the case $\DD$, \cite{Aleman,AlemanCima} for integration, and~\cite{Peller2003,Zhu} 
for Hankel. For some recent developments on Hankel operators in Bergman spaces, see~\cite{PauZhaoZhu} and references therein.

Our next goal is to study dominating sets for $A^p_\om$ in order to obtain a sufficient condition for a positive Borel measure $\mu$ on $\D$ to be a sampling measure for $A^p_\om$. A measurable set $G=G(f)\subset\D$ is called a dominating set for $f\in A^p_\om$ if there exists $\d=\d(G)>0$ such that
    $$
    \int_G|f(z)|^p\om(z)\,dA(z)\ge\d\|f\|_{A^p_\om}^p.
    $$
If this inequality is valid for all $f\in A^p_\om$, then $G$ is called a dominating set for $A^p_\om$. To state the results, some more notation is needed. The reproducing kernels of the Hilbert space $A^2_\om$ induced by a radial weight $\om$ are given by
    $$
    B^\om_z(\z)=\sum_{n=0}^\infty\frac{(\overline{z}\z)^n}{2\om_{2n+1}},\quad z,\z\in\D,
    $$
where $\om_x=\int_0^1s^{x}\om(s)\,ds$ for all $-1<x<\infty$, and each $f\in A^1_\om$ satisfies
    \begin{equation}\label{eq:reproducing-formula}
    f(z)=\langle f,B^\om_z\rangle_{A^2_\om}=\int_\D f(\z)\overline{B_z^\om(\z)}\om(\z)\,dA(\z),\quad z\in\D.
    \end{equation}
Write
	$$
	K_z^\om(\z)=\frac{|B_z^\om(\z)|^2}{\|B_z^\om\|^2_{A^2_\om}},\quad z,\z\in\D.
	$$
Our first result on dominating sets reads as follows.

\begin{theo}\label{theo:dominating-set-for-f}
Let $0<q<p<\infty$ and $\om\in\DD$. Then there exists a constant $C=C(\om,q)>0$ such that
    \begin{equation}\label{eq:PointEval}
    |f(z)|^q \le C \int_\D |f(\z)|^q K_z^\om(\z) \om(\z)\,dA(\z),\quad f\in A^q_\om,\quad z \in \D.
    \end{equation}
Moreover, if $f\in A^p_\om$ and $E = E(\e,q,f)$ is the set of points $z\in\D$ for which
    \begin{equation}\label{eq:BadPoints}
    |f(z)|^q \leq \e \int_\D |f(\z)|^q K^\om_z(\z)\om(\z)\,dA(\z),
    \end{equation}
then there exists a constant $C = C(p/q,\om)$ such that
    \begin{equation}\label{eq:BadPointsEstimate}
    \int_E |f(z)|^p\om(z)\,dA(z) \le (C\e)^{p/q} \|f\|^p_{A^p_\om}.
    \end{equation}
Therefore $\e > 0$ may be chosen such that $G=\D\setminus E$ satisfies
    \begin{equation*}
    \int_G |f(z)|^p\om(z)\,dA(z) \ge \frac12 \|f\|^p_{A^p_\om},\quad f\in A^p_\om,
    \end{equation*}
and thus $G$ is a dominating set for $f\in A^p_\om$ if $\e>0$ is sufficiently small.
\end{theo}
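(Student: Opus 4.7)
My plan is to establish the three assertions in order: the pointwise kernel estimate \eqref{eq:PointEval}, the bad-points estimate \eqref{eq:BadPointsEstimate}, and the dominating-set conclusion.

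For \eqref{eq:PointEval}, I start with $q=2$, which is immediate from the reproducing identity applied to $fB_z^\om\in A^2_\om$ at $z$: $f(z)B^\om(z,z)=\langle fB_z^\om,B_z^\om\rangle_{A^2_\om}$, whence Cauchy-Schwarz yields $|f(z)|^2 B^\om(z,z)\le\|fB_z^\om\|_{A^2_\om}^2$, i.e.\ $|f(z)|^2\le\int_\D|f|^2 K_z^\om\om\,dA$. For $q\ge 2$, Jensen's inequality with the convex function $t\mapsto t^{q/2}$ applied to the probability measure $K_z^\om\om\,dA$ and to $|f|^2$ gives $\bigl(\int|f|^2 K_z^\om\om\,dA\bigr)^{q/2}\le\int|f|^q K_z^\om\om\,dA$, which combined with the $q=2$ case yields the result with constant $1$. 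For $0<q<2$ Jensen goes the wrong way, so I would instead use the subharmonicity of $|f|^q$ combined with the $\DD$-estimates $|B_z^\om(\z)|\asymp B^\om(z,z)\asymp\bigl((1-|z|)\widehat{\om}(z)\bigr)^{-1}$ for $\z$ in a fixed pseudohyperbolic disk around $z$ and the identity $\om(S(z))\asymp(1-|z|)\widehat{\om}(z)$; a covering/tent argument handles the case when $\om$ vanishes near $\Delta(z,r)$.

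For \eqref{eq:BadPointsEstimate}, set $T(z):=\int_\D|f|^q K_z^\om\om\,dA$. On $E$ we have $|f(z)|^q\le\e T(z)$ and, since $p/q>1$, also $|f(z)|^p\le\e^{p/q}T(z)^{p/q}$. Integrating against $\om\,dA$,
$$\int_E|f|^p\om\,dA\le\e^{p/q}\int_\D T(z)^{p/q}\om(z)\,dA(z).$$
It remains to prove $\int_\D T^{p/q}\om\,dA\le C\|f\|_{A^p_\om}^p$. This amounts to an $L^{p/q}_\om$-bound for the Berezin-type transform $Bg(z):=\int_\D g K_z^\om\om\,dA$ applied to $g=|f|^q$, i.e.\ $\|T\|_{L^{p/q}_\om}\le C\||f|^q\|_{L^{p/q}_\om}=C\|f\|_{A^p_\om}^q$. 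I expect this boundedness to follow, for $\om\in\DD$ and exponent $p/q>1$, by dominating $B$ pointwise by a Hardy-Littlewood-type maximal function over pseudohyperbolic disks, using off-diagonal kernel estimates for $|B^\om(z,\z)|$ in the class $\DD$ as in \cite{PelSum14,PR2014Memoirs}, and then applying the maximal-function theorem in $L^{p/q}_\om$. This yields $\int_E|f|^p\om\,dA\le(C\e)^{p/q}\|f\|_{A^p_\om}^p$ with $C=C(p/q,\om)$.

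The dominating-set conclusion is then immediate: choose $\e>0$ so that $(C\e)^{p/q}\le 1/2$, whence $\int_G|f|^p\om\,dA=\|f\|_{A^p_\om}^p-\int_E|f|^p\om\,dA\ge\tfrac12\|f\|_{A^p_\om}^p$. I expect the main obstacle to be the control of $\int T^{p/q}\om\,dA$: a direct Fubini--Jensen reduction bounds it by $\int|f|^p\om h\,dA$ with $h(\z)=\int_\D K_z^\om(\z)\om(z)\,dA(z)$, but $h$ diverges logarithmically at $\partial\D$ even for standard weights, so a more refined maximal-function domination relying on off-diagonal kernel estimates for $\om\in\DD$ is essential.
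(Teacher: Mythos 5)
Your reduction of \eqref{eq:BadPointsEstimate} to the bound $\int_\D T(z)^{p/q}\om(z)\,dA(z)\lesssim\|f\|_{A^p_\om}^p$, and the final choice of $\e$, coincide with the paper's argument, and your diagnosis that the naive Fubini--Jensen bound fails logarithmically is accurate. But the step you leave as an expectation is precisely the heart of the theorem, and the route you sketch does not work in the generality $\om\in\DD$. Dominating $g\mapsto\int_\D gK^\om_z\om\,dA$ pointwise by a Hardy--Littlewood type maximal operator would require off-diagonal decay of $|B^\om_z(\z)|$ and a maximal theorem in $L^{p/q}_\om$; for general $\om\in\DD$ the known pointwise bounds for $B^\om_z$ (essentially $|B^\om_z(\z)|\le B^\om_{|z|}(|\z|)$ from the series) carry no off-diagonal decay, and $\om$ need not be a doubling measure on $\D$ --- it may even vanish on a set of positive measure --- so no weighted maximal theorem is at hand. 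The paper instead proves $I(f)=\int_\D T^{p/q}\om\,dA\lesssim\|f\|^p_{A^p_\om}$ by splitting the kernel with the auxiliary weight $k(\z)=\omh(\z)^{\e}$, $\e<1-q/p$, applying H\"older's inequality and Fubini's theorem, and estimating the resulting inner integrals with the $L^s$-norm estimates for $B^\om_z$ from \cite[Theorem~1]{PR2016TwoWeight} together with Lemma~\ref{Lemma:weights-in-D-hat}; even in the smaller class $\DDD$, where $P^+_\om$ is bounded on $L^{p/q}_\om$, the argument still needs those kernel norm estimates. Without this (or an equivalent) computation your proof of \eqref{eq:BadPointsEstimate} is incomplete.

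A second, smaller gap concerns \eqref{eq:PointEval} for small $q$ (your cases $0<q<2$; the parts for $q\ge2$ via the reproducing formula and Jensen are fine). Subharmonicity gives the unweighted area mean of $|f|^q$ over $\Delta(z,r)$, but since $\om\in\DD$ may vanish on a set of positive measure meeting every $\Delta(z,r)$, no local covering argument can convert that mean into the $\om$-weighted quantity $\int_\D|f|^qK^\om_z\om\,dA$. The paper's fix is global: it keeps the kernel inside the integral, shows that $d\mu_z(\z)=\frac{\omh(\z)}{1-|\z|}\chi_{\Delta(z,r)}(\z)\,dA(\z)$ is a $1$-Carleson measure for $A^1_\om$ uniformly in $z$, and then reruns the proof of the Carleson embedding theorem \cite[Theorem~3.3]{PelSum14} for $|f|^q|B^\om_z|^2$, which is admissible there because a positive power of it is locally subharmonic even though it is not a power of the modulus of a single analytic function. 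Your phrase ``a covering/tent argument handles the case when $\om$ vanishes near $\Delta(z,r)$'' is exactly the point that requires this Carleson-measure argument, so as written it remains a gap.
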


The inequality \eqref{eq:PointEval} with $C=1$ is easy to establish if $q\ge1$. Namely, an application of the reproducing formula \eqref{eq:reproducing-formula} to $fB^\om_z$ gives
	$$
	f(z)=\int_\D f(\z)K^\om_z(\z)\om(\z)\,dA(\z),\quad z\in\D.
	$$
This gives the assertion for $q=1$, which together with H\"older's inequality can be used to establish the claim in the case $q>1$. This kind of reasoning does not seem to work for $0<q<1$, and we will argue differently; we first use the subharmonicity of $|f|^q$ together with the fact that $|B^\om_z(\z)|\asymp B^\om_z(z)$ for all $\z\in\Delta(z,r)$ and $z\in\D$ if $r=r(\om)\in(0,1)$ is sufficiently small~\cite[Lemma~8]{PRS2015Berezin}, and then employ a proof of a Carleson embedding theorem for specific subharmonic functions~\cite[Theorem~3.3]{PelSum14}.

The estimate \eqref{eq:BadPointsEstimate} can be obtained quite easily for $\om\in\mathcal{D}\subset\DD$ by using the boundedness of the maximal Bergman projection $P^+_\om$ on $L^p_\om$ for each $p>1$, but the general case $\om\in\DD$ is more laborious and relies on the $L^p$-estimates of the kernel functions~$B_z^\om$ given in \cite{PR2016TwoWeight}. The special case of Theorem~\ref{theo:dominating-set-for-f} concerning standard weighted Bergman spaces can be found in \cite[Lemma~2]{L2000}. The proof there is different and does not carry over to the situation of Theorem~\ref{theo:dominating-set-for-f}.

The proof of \eqref{eq:BadPointsEstimate} in Theorem~\ref{theo:dominating-set-for-f} does not work for $p=q$. In that case, it is natural to replace the the right hand side of~\eqref{eq:BadPoints} by an average over a subset of a pseudohyperbolic disc centered at $z$. To state the result some notation is needed. Let $r\in(0,1)$, $\nu$ a positive Borel measure on $\D$ and $E(z)\subset\Delta(z,r)$ such that $\nu(E(z))>0$ for all $z\in\D$. Define
    $$
    Q(f)(z)=\frac1{\nu(E(z))} \int_{E(z)} |f(\z)|^p\nu(\z)\,dA(\z), \quad z\in\D,
    $$
and $E=E(f,\nu)=\{z\in\D:|f(z)|^p\le\e Q(f)(z)\}$.

\begin{theo}\label{theo:dominating-set-for-f-2}
Let $0<p<\infty$, $0<r<1$ and $\om\in\DD$. Then there exists a constant $C=C(r,\om)>0$ such that
    $$
    \int_E|f(z)|^p\om(z)\,dA(z)\le C\e\|f\|^p_{A^p_\om},
    $$
and thus $\D\setminus E$ is a dominating set for $f\in A^p_\om$ if $\e=\e(r,\om)>0$ is sufficiently small.
\end{theo}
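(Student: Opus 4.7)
The plan is to reduce the averaged inequality defining $E$ to a pointwise subharmonic bound, then apply Fubini and a Carleson embedding theorem for $A^p_\om$ with $\om\in\DD$. The measure $\nu$ can actually be discarded at the very first step: since the $\nu$-average of $|f|^p$ over $E(z)$ is at most the corresponding supremum, on $E$ we have
\begin{equation*}
|f(z)|^p \le \e\,Q(f)(z) \le \e \sup_{\z\in\Delta(z,r)}|f(\z)|^p,
\end{equation*}
which is why the final constant depends only on $r$ and $\om$. Next, I would fix $R=(1+r)/2\in(r,1)$ and choose $r_0=r_0(r)>0$ so small that $\Delta(\z,r_0)\subset\Delta(z,R)$ for every $\z\in\Delta(z,r)$. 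The sub-mean-value property of the subharmonic function $|f|^p$, combined with $|\Delta(\z,r_0)|\asymp|\Delta(z,R)|$, then yields
\begin{equation*}
\sup_{\z\in\Delta(z,r)}|f(\z)|^p \le \frac{C_1}{|\Delta(z,R)|}\int_{\Delta(z,R)}|f(w)|^p\,dA(w),\quad C_1=C_1(r).
\end{equation*}

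Multiplying by $\om(z)$, integrating over $E$, extending the outer integral to $\D$, and applying Fubini (using the symmetry of $w\in\Delta(z,R)$ in $z$ and $w$, together with $|\Delta(z,R)|\asymp|\Delta(w,R)|$ on this region) produces
\begin{equation*}
\int_E |f(z)|^p\om(z)\,dA(z)\le C_2\,\e\int_\D |f(w)|^p\,d\mu(w),\quad d\mu(w)=\frac{\om(\Delta(w,R))}{|\Delta(w,R)|}\,dA(w).
\end{equation*}
It then suffices to prove that $\mu$ is a $p$-Carleson measure for $A^p_\om$. Expanding $\om(\Delta(w,R))$ and swapping once more,
\begin{equation*}
\mu(\Delta(z_0,R))\asymp\frac{1}{|\Delta(z_0,R)|}\int_\D \om(u)\,|\Delta(u,R)\cap\Delta(z_0,R)|\,dA(u)\lesssim \om(\Delta(z_0,R'))
\end{equation*}
for some $R'=R'(R)\in(R,1)$; the integrand vanishes outside $\Delta(z_0,R')$ and $|\Delta(u,R)|\asymp|\Delta(z_0,R)|$ on it. As $\om\in\DD$, the quantities $\om(\Delta(z_0,s))$ with $s\in(0,1)$ are mutually comparable once the ratios of the radii are fixed, hence $\mu(\Delta(z_0,R))\lesssim\om(\Delta(z_0,R))$. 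Inserting this into the Carleson embedding theorem for $A^p_\om$ induced by a doubling weight (see \cite{PR2015Embedding}) gives $\int_\D|f|^p\,d\mu\lesssim\|f\|_{A^p_\om}^p$, and combining with the previous display finishes the proof.

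The only substantive obstacle is the availability of a Carleson embedding valid for the full class $\DD$ together with the pseudohyperbolic covering estimates supporting it; the pointwise reduction via subharmonicity is routine once $R$ is chosen, and the trivial sup-bound removes $\nu$ at the outset, which is exactly what makes the conclusion independent of the measure used to define $Q(f)$.
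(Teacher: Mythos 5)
Your reduction is fine up to the point where the measure $\mu$ appears: discarding $\nu$ by the trivial bound average $\le$ supremum, invoking subharmonicity of $|f|^p$, and applying Fubini do give
$\int_E|f|^p\om\,dA\le C\e\int_\D|f|^p\,d\mu$ with $d\mu(w)=\om(\Delta(w,R))|\Delta(w,R)|^{-1}dA(w)$, and this is parallel in spirit to the paper's proof (which keeps $\nu$ and only uses $\nu(S(a)\cap E(z))/\nu(E(z))\le1$, so it never needs subharmonicity). The genuine gap is in your last step. The claim that for $\om\in\DD$ the quantities $\om(\Delta(z_0,s))$ are mutually comparable once the ratio of the radii is fixed is false: the doubling hypothesis in $\DD$ concerns the tail $\widehat{\om}(r)=\int_r^1\om(s)\,ds$, not $\om$ itself, and a weight in $\DD$ may vanish identically on annuli of fixed hyperbolic width (the paper emphasizes that $\DD$-weights can vanish on sets of positive measure). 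Choosing such an $\om$ and $z_0$ centred in a zero annulus whose pseudohyperbolic half-width lies strictly between $R$ and $2R/(1+R^2)$ gives $\om(\Delta(z_0,R))=0$ while $\mu(\Delta(z_0,R))>0$, because $\Delta(w,R)$ exits the annulus for a set of $w\in\Delta(z_0,R)$ of positive area; so the disc estimate $\mu(\Delta(z_0,R))\lesssim\om(\Delta(z_0,R))$ you verify simply fails in the class $\DD$. Moreover, even where it holds, that disc condition is not the Carleson criterion available here: for $\om\in\DD$, \cite[Theorem~1]{PR2015Embedding} characterizes $p$-Carleson measures by $M_\om(\mu)\in L^\infty$, i.e. by $\mu(S(a))\lesssim\om(S(a))$ on Carleson squares; the characterization via $\mu(\Delta(a,r))\lesssim\om(\Delta(a,r))$ requires Bekoll\'e--Bonami type hypotheses, cf.\ \cite[Theorem~3.1]{Constantin2} and the discussion following Theorem~\ref{Theorem:D-hat}.

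The repair is short and lands on the paper's argument: test your $\mu$ on Carleson squares. By Fubini's theorem,
\begin{equation*}
\mu(S(a))=\int_\D\om(u)\left(\int_{S(a)\cap\Delta(u,R)}\frac{dA(w)}{|\Delta(w,R)|}\right)dA(u)
\lesssim\int_{\{u\,:\,\Delta(u,R)\cap S(a)\ne\emptyset\}}\om(u)\,dA(u)\le\om(S(b)),
\end{equation*}
since $|\Delta(w,R)|\asymp(1-|u|)^2\asymp|\Delta(u,R)|$ for $w\in\Delta(u,R)$, where $b=b(a,R)$ satisfies $\arg b=\arg a$ and $1-|b|\asymp1-|a|$. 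Then $\om(S(b))\lesssim\om(S(a))$ follows from $\om\in\DD$ via Lemma~\ref{Lemma:weights-in-D-hat}(ii) --- this is exactly where the radial doubling enters in the paper's proof --- and \cite[Theorem~1]{PR2015Embedding} yields $\int_\D|f|^p\,d\mu\lesssim\|f\|^p_{A^p_\om}$. With this substitution your proof is complete and the constant depends only on $r$ and $\om$, as required.
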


Special cases of Theorem~\ref{theo:dominating-set-for-f-2} with $\nu$ being the Lebesgue measure can be found in~\cite[Lemmas~2 and~3]{L1981}. Our proof is different from these results and relies on Carleson measures.

Since our main results so far concern also certain non-radial weights, it is reasonable to discuss that aspect of Theorems~\ref{theo:dominating-set-for-f} and~\ref{theo:dominating-set-for-f-2} as well. The first obstruction in the proof of Theorem~\ref{theo:dominating-set-for-f} for non-radial weights is the pointwise estimate $|B^\om_z(\z)|\asymp B^\om_z(z)$ which does not have a known sufficiently general non-radial extension. The second problem arises with Carleson measures, and finally the lack of satisfactory norm estimates for kernel functions prevents our reasoning from carrying over to the non-radial case all together. The situation of Theorem~\ref{theo:dominating-set-for-f-2} is better because of \cite[Lemmas~1 and~2]{L1984} and the proof of \cite[Theorem~3.9]{L1985/2}, though the weight $\om$ in \cite{L1984} is rather particular (but the domain lies in $\C^n$ and is quite general). A careful inspection of the proof of \cite[Theorem~9]{PR2015Embedding} shows that the argument used to obtain Theorem~\ref{theo:dominating-set-for-f-2} works for weights $\om$ that are doubling in Carleson squares, denoted by $\om\in\DD(\D)$ and defined in detail below, if $N:A^p_\om\to L^p_\om$ is bounded. This raises the question if the maximal operator $N(f)(z)=\sup_{\z\in\G(z)}|f(\z)|$, where
    \begin{equation*}\label{eq:gammadeuintro}
    \Gamma(\z)=\left\{z\in \D:\,|\t-\arg
    z|<\frac12\left(1-\frac{|z|}{r}\right)\right\},\quad
    \z=re^{i\theta}\in\D\setminus\{0\},
    \end{equation*}
are non-tangential approach regions with vertexes inside the disc \cite[Chapter~4.1]{PR2014Memoirs}, is bounded from $A^p_\om$ to $L^p_\om$ when $\om\in\DD(\D)$. Unfortunately, we do not know the answer to this question.

We next proceed to study dominating sets for the whole space $A^p_\om$. For $1<q<\infty$ and a (almost everywhere) positive weight $\om$, write $\om\in C_q$ if for some some (equivalently for each) $r\in(0,1)$, there exists a constant $C=C(q,r,\om)>0$ such that
    \begin{equation*}
    \left(\int_{\Delta(z,r)}\om(z)\,dA(z)\right)^\frac1q\left(\int_{\Delta(z,r)}\om(z)^{-\frac{q'}{q}}\,dA(z)\right)^\frac1{q'}\le C |\Delta(z,r)|,\quad z\in\D,
    \end{equation*}
and set $C_\infty=\cup_{q>1}C_q$. Luecking \cite[Theorem~3.9]{L1985/2} showed that if $G\subset\D$ is measurable, $0<p<\infty$ and $\om\in C_\infty$ such that
    \begin{equation}\label{Eq:Luecking-dominating}
    |G\cap \Delta(z,r)|\ge\delta|\Delta(z,r)|,\quad z\in\D,
    \end{equation}
for some $\d>0$ and $r\in(0,1)$, then $G$ is a dominating set for $A^p_{\om}$. This condition is equivalent to the existence of a constant $\delta_0=\delta_0>0$ such that $|G\cap S|>\delta_0|S|$ for all Carleson squares~$S$.
One can also replace the pseudohyperbolic disc $\Delta(a,r)$ by a suitable Euclidean disc, for example, $D(a, \eta(1-|a|))$ for a fixed $0<\eta<1$ would work here. For the proofs of these equivalences, see~\cite{L1981}. The condition \eqref{Eq:Luecking-dominating} is known to be also a necessary condition for $G$ to be a dominating set for $A^p_{\om}$ if $\om$ is one of the standard weights by \cite{L1981}.

The existing literature does not offer results concerning the converse statement of the above-mentioned result in the non-radial case. We next turn our attention to this matter, and to do it we write $\om\in\DD(\D)$ if there exists $C=C(\om)>0$ such that $\om(S(a))\le C\om(S(\frac{1+|a|}{2}e^{i\arg a}))$ for all $a\in\D\setminus\{0\}$. It is easy to see that each $\om\in\DD(\D)$ satisfies $\om(S(a'))\le C(C+1)\om(S(a))$ for all $a,a'\in\D\setminus\{0\}$ with $|a'|=|a|$ and $\arg a'=\arg a\pm(1-|a|)$. Therefore $\om(S(a))\lesssim\om(S(b))$ whenever $|b|=\frac{1+|a|}{2}$ and $S(b)\subset S(a)$. Moreover, it is obvious that radial weights in $\DD(\D)$ form the class $\DD$.

\begin{theo}\label{theo:dominating-sets-non-radial}
Let $0<p<\infty$ and $\om\in\DD(\D)$. If $G$ is a dominating set for $A^p_\om$, then there exists a constant $\delta > 0$ such that
\begin{equation}\label{eq:delta-condition-set-G-non-radial}
    \om(G\cap S)>\delta\om(S)
\end{equation}
for all Carleson squares $S$.
\end{theo}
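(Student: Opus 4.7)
Plan: I would derive the density condition directly from the dominating-set property by constructing test functions whose $A^p_\om$-mass is essentially supported in an enlarged Carleson region of each point $a\in\D$. For each $a\in\D\setminus\{0\}$, let
$$
f_a(z) = \frac{(1-|a|)^\sigma}{(1-\overline{a}z)^\sigma\,\om(S(a))^{1/p}}, \qquad U_j(a) = \{z\in\D : |1-\overline{a}z| < 2^j(1-|a|)\},
$$
with $\sigma > 0$ a parameter to be fixed later, and write $C$ for the doubling constant of $\om\in\DD(\D)$. Note $S(a)\subset U_1(a)$, and using the tangential extension $\om(S(a'))\le C(C+1)\om(S(a))$ of $\DD(\D)$-doubling noted in the paragraph before the theorem together with iteration, each $U_j(a)$ is covered by a uniformly bounded number of Carleson squares of scale $2^j(1-|a|)$, so $\om(U_j(a))\lesssim C^j\om(S(a))$.

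From the pointwise bound $|f_a|^p \le 1/\om(S(a))$ (since $|1-\overline{a}z|\ge 1-|a|$) and the dyadic estimate $|f_a|^p \le 2^{-jp\sigma}/\om(S(a))$ on $U_{j+1}(a)\setminus U_j(a)$, summing the resulting geometric series (convergent once $p\sigma > \log_2 C$) yields a uniform upper bound $\|f_a\|^p_{A^p_\om}\le A(\sigma) = O(1)$ together with the tail estimate $\int_{\D\setminus U_1(a)}|f_a|^p\om\,dA \le \eta(\sigma)$, where $\eta(\sigma) = O(2^{-p\sigma})\to 0$ as $\sigma\to\infty$. For the lower bound, on the sub-square $T(a) := S\bigl(\tfrac{1+|a|}{2}e^{i\arg a}\bigr) \subset U_1(a)$ a direct calculation gives $|1-\overline{a}z|\le \tfrac{7}{4}(1-|a|)$, hence $|f_a|^p\ge (4/7)^{p\sigma}/\om(S(a))$, while $\om(T(a))\ge\om(S(a))/C$ by $\DD(\D)$-doubling, and together they yield $\|f_a\|^p_{A^p_\om}\ge B(\sigma) := (4/7)^{p\sigma}/C$.

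Applying the dominating-set hypothesis to $f_a$ reads
$$
\delta B(\sigma) \le \delta\|f_a\|^p_{A^p_\om} \le \int_G|f_a|^p\om\,dA \le \frac{\om(G\cap U_1(a))}{\om(S(a))} + \eta(\sigma).
$$
Since $\eta(\sigma)/B(\sigma) = O((7/8)^{p\sigma})\to 0$, fixing $\sigma$ large enough forces $\om(G\cap U_1(a))\ge\delta'\om(S(a))$ uniformly in $a\in\D$, for some $\delta' = \delta'(\om,p,\delta) > 0$. To pass from $U_1$ to a true Carleson square, given $S(a)$ I would pick $b = (1-(1-|a|)/M)e^{i\arg a}$ with $M = M(\om)\in\N$ large enough that $U_1(b)\subset S(a)$; this inclusion reduces to an angular-width check via the standard comparison $|1-\overline{b}z|^2 \asymp (1-|b||z|)^2 + |b||z|(\arg z-\arg b)^2$. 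Iterating $\DD(\D)$-doubling $\lceil\log_2 M\rceil$ times gives $\om(S(b))\gtrsim\om(S(a))$, and combining the density bound applied at $b$ with $U_1(b)\subset S(a)$ produces
$$
\om(G\cap S(a)) \ge \om(G\cap U_1(b)) \ge \delta'\om(S(b)) \gtrsim \om(S(a)),
$$
which is \eqref{eq:delta-condition-set-G-non-radial}.

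The main obstacle is the delicate balance between the exponentially small quantities $B(\sigma)$ and $\eta(\sigma)$: the argument works precisely because the constant $7/4$ in the bound $|1-\overline{a}z|\le (7/4)(1-|a|)$ on $T(a)$ is strictly less than $2$, so that $(4/7)^{p\sigma}$ decays strictly slower than $2^{-p\sigma}$. The remaining technical points are the covering estimate $\om(U_j(a))\lesssim C^j\om(S(a))$ and the inclusion $U_1(b)\subset S(a)$, both routine consequences of the tangential doubling and the standard asymptotic formula above.
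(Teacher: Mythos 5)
Your argument is correct and is essentially the paper's own proof: the same test functions $\bigl(\tfrac{1-|a|}{1-\overline{a}z}\bigr)^{m/p}\om(S(a))^{-1/p}$, the same doubling-based norm and tail estimates (which the paper packages as Lemma~\ref{Lemma:test-functions-non-radial}, in particular its kernel estimate (iv)), and the same application of the dominating-set hypothesis to extract the density condition. The only difference is bookkeeping: the paper keeps $\|f_a\|_{A^p_\om}\asymp1$ and absorbs the tail by enlarging to $S(a_n)$ with $n=n(\delta)$ fixed, so every Carleson square arises directly as some $S(a_n)$, whereas you freeze the enlargement at scale $2$ and tune the exponent $\sigma(\delta)$ instead, which is what forces your $7/4<2$ balance between $B(\sigma)$ and $\eta(\sigma)$ and the extra shrinking step $U_1(b)\subset S(a)$ -- a sound but slightly more delicate variant of the same argument.
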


The proof of Theorem~\ref{theo:dominating-sets-non-radial} is based on characterizations of weights in $\DD(\D)$ given in Lemma~\ref{Lemma:test-functions-non-radial} below and appropriately chosen test functions. By combining Theorem~\ref{theo:dominating-sets-non-radial} with \cite[Theorem~3.9]{L1985/2} and imposing severe additional hypothesis on $\om$ one can certainly obtain a characterization of dominating sets in the non-radial case but because of these extra assumptions the resulting description is far from being satisfactory. The approach involving the Lebesgue measure and yielding \eqref{Eq:Luecking-dominating} is natural and has been efficiently used in \cite{L1981}, \cite{L1984} and \cite{L1985/2}, but it seems that the arguments used there are not adoptable as such to prove \eqref{eq:delta-condition-set-G-non-radial} to be a sufficient condition. It is of course equally natural to measure the set $G$ as in \eqref{eq:delta-condition-set-G-non-radial} by using the weight $\om$ itself that induces the space. Moreover, the studies on Carleson measures~\cite{PelSum14}, \cite{PR2014Memoirs}, \cite{PR2015Embedding}, \cite{PelRatSie2015} strongly support the use of Carleson squares instead of pseudohyperbolic discs as testing sets, at least when $\om$ induces a very small weighted Bergman space. It is also worth noticing that the hypothesis $\om\in\DD(\D)$ allows $\om$ to vanish in a relatively large part of each outer annulus of $\D$, meanwhile weights in $C_\infty$ may not have this property because of the negative power $-\frac{q'}{q}$ appearing in the definition of $C_q$. Therefore a complete solution to the question of when a set $G$ is a dominating set for $A^p_\om$ remains as an open problem in both non-radial weight classes $C_\infty$ and $\DD(\D)$.

Finally, we turn our attention to sampling. A positive Borel measure on $\D$ is a sampling measure for $A_\om^p$ if
    \begin{equation*}
    \int_\D |f(z)|^p \,d\mu(z) \asymp \|f\|_{A_\om^p}^p, \quad f \in A_\om^p.
    \end{equation*}
The measures $\mu$ satisfying the inequality ''$\lesssim$'' are the $p$-Carleson measures for $A_\om^p$. These measures in the case $\om\in\DD$ have been studied in \cite{PR2014Memoirs,PR2015Embedding,PelRatSie2015}, and can be characterized in terms of the weighted maximal function
    $$
    M_\om(\mu)(z) = \sup_{S\ni z}\frac{\mu(S)}{\om(S)}, \quad z \in \D:
    $$
$\mu$ is a $p$-Carleson measure for $A_\om^p$ if and only if $M_\om(\mu) \in L^\infty$, and $\|Id\|_{A^p_\om\to L^p_\mu}^p\asymp\|M_\om(\mu)\|_{L^\infty}$. Therefore these measures are independent of $p$ for each $\om\in\DD$. Before stating our result, some more notation is needed. For a positive Borel measure $\mu$ on $\D$ and $r\in(0,1)$, let $k_r(z) = \mu(\Delta(z,r))/\om(\Delta(z,r))$ for all $z\in\D$.
The next theorem describes a condition sufficient to guarantee that a positive Borel measure $\mu$ is a sampling measure for $A^p_\om$. Recall that the hypothesis $\mu(\Delta(a,r))\lesssim\om(\Delta(a,r))$ characterizes Carleson measures for certain $A^p_\om$ as mentioned in the paragraph just after Theorem~\ref{Theorem:D-hat}.

\begin{theo}\label{theo:sampling-suff}
Let $0<p<\infty$, $\e>0$ and either $\om\in\DD$ such that $\om>0$ almost everywhere on $\D$, and $\mu$ a $p$-Carleson measure for $A^p_\om$, or $\om\in C_\infty$ and $\mu(\Delta(a,r))\lesssim\om(\Delta(a,r))$ for all $a\in\D\setminus\{0\}$. Then there exists an $r \in (0,1)$
such that $\mu$ is a sampling measure for $A^p_\om$ whenever the set $G = \{z \in \D : k_r(z) > \e\|M_\om(\mu)\|_{L^\infty}\}$ is a dominating set for $A^p_\om$.
\end{theo}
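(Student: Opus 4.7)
The proof establishes the two Carleson-type inequalities that together define a sampling measure.

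The upper bound $\int_\D|f|^p\,d\mu\lesssim\|f\|^p_{A^p_\om}$ is immediate from the hypotheses. In the $\om\in\DD$ case, $\mu$ is a $p$-Carleson measure for $A^p_\om$ by assumption, and the characterization recalled just above the theorem gives the bound with constant comparable to $\|M_\om(\mu)\|_{L^\infty}$; in the $\om\in C_\infty$ case, $\mu(\Delta(a,r))\lesssim\om(\Delta(a,r))$ combined with \cite[Theorem~3.1]{Constantin2} (or \cite[Theorem~3.9]{L1985/2}) yields the same.

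For the reverse bound $\|f\|^p_{A^p_\om}\lesssim\int_\D|f|^p\,d\mu$, the dominating set hypothesis reduces the problem to showing $\int_G|f|^p\om\,dA\lesssim\int_\D|f|^p\,d\mu$, and the defining inequality of $G$ rewrites as
\begin{equation}\label{eq:plan-dominate}
\om(\Delta(z,r))<\frac{1}{\e\|M_\om(\mu)\|_{L^\infty}}\mu(\Delta(z,r)),\qquad z\in G,
\end{equation}
exhibiting $\om\,dA$ as disc-dominated by $\mu$ on $G$. I would combine \eqref{eq:plan-dominate} with Theorem~\ref{theo:dominating-set-for-f-2} applied with $\nu=\mu$ and $E(z)=\Delta(z,r)$; the required positivity $\nu(E(z))>0$ is automatic on $G$, and is secured on $\D\setminus G$ by the perturbation $\nu=\mu+\om\mathbf{1}_{\D\setminus G}\,dA$ (which does not harm the argument since the dominating set condition controls the contribution from $\D\setminus G$). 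For $\e_1>0$ small, the bad set $\tilde E=\{|f|^p\le\e_1 Q(f)\}$ satisfies $\int_{\tilde E}|f|^p\om\,dA\le C\e_1\|f\|^p_{A^p_\om}$, absorbable into the left-hand side of the dominating inequality. On $\tilde E\cap G$ the defining inequalities of $\tilde E$ and $G$ combine to yield
$$
|f(z)|^p\om(\Delta(z,r))\lesssim\frac{\e_1}{\e\|M_\om(\mu)\|_{L^\infty}}\int_{\Delta(z,r)}|f|^p\,d\nu,
$$
and a Fubini rearrangement—using the doubling property $\om(\Delta(\z,r))\asymp\om(\Delta(z,r))$ for $\z\in\Delta(z,r)$, valid in both weight classes—converts this into $\int_{\tilde E\cap G}|f|^p\om\,dA\lesssim\int_\D|f|^p\,d\mu$.

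The principal obstacle is the control of the complementary piece $\int_{G\setminus\tilde E}|f|^p\om\,dA$, because on $G\setminus\tilde E$ the good-set inequality $|f|^p>\e_1 Q(f)$ goes in the wrong direction. The plan here is to pass to a lattice discretization: choosing a maximal $r/5$-separated sequence $\{z_j\}\subset\D$, the atomic-type equivalence $\|f\|^p_{A^p_\om}\asymp\sum_j|f(z_j)|^p\om(\Delta(z_j,r))$ (available for $\om\in\DD$ through the mean-value estimates of Section~2 of \cite{PR2014Memoirs} and for $\om\in C_\infty$ through \cite{L1985/2}) reduces matters to bounding $\sum_{z_j\in G}|f(z_j)|^p\om(\Delta(z_j,r))$ by $\int_\D|f|^p\,d\mu$. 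Using \eqref{eq:plan-dominate} to replace $\om(\Delta(z_j,r))$ by $\mu(\Delta(z_j,r))$, and applying Theorem~\ref{theo:dominating-set-for-f-2} once more to discard lattice points lying in the bad set, the bounded overlap of $\{\Delta(z_j,r)\}$ completes the estimate. The careful bookkeeping of the parameters—$r$, the dominating constant, $\e$, $\e_1$, and the lattice separation—so that all error terms absorb cleanly is the most delicate aspect of the proof.
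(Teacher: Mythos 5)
Your upper (Carleson) bound and your treatment of the piece $G\cap\tilde E$ are fine, but the proof has a genuine gap precisely at the point you yourself flag as ``the principal obstacle'', and the lattice discretization you propose does not resolve it. First, the claimed equivalence $\|f\|^p_{A^p_\om}\asymp\sum_j|f(z_j)|^p\om(\Delta(z_j,r))$ for a fixed $r/5$-separated lattice is not available for general $\om\in\DD$: the direction $\|f\|^p_{A^p_\om}\lesssim\sum_j|f(z_j)|^p\om(\Delta(z_j,r))$ is itself a sampling-sequence statement, and its standard proof (replace $|f(z_j)|$ by a local supremum and control the oscillation) produces an error term comparable to $\int_\D|f(z)|^p\widehat{\om}(z)(1-|z|)^{-1}\,dA(z)$; for $\om\in\DD\setminus\Dd$, for instance $\widehat{\om}(r)=\bigl(\log\frac{e}{1-r}\bigr)^{-2}$, the measure $\widehat{\om}(z)(1-|z|)^{-1}dA(z)$ is not even a $p$-Carleson measure for $A^p_\om$, so this error cannot be absorbed, and the sources you cite do not supply the equivalence in this generality. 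Second, even granting it, the step ``apply Theorem~\ref{theo:dominating-set-for-f-2} once more to discard lattice points lying in the bad set'' inverts the roles: the bad-set points, where $|f(z_j)|^p\le\e_1Q(f)(z_j)$, are exactly the harmless ones, since only for them can $|f(z_j)|^p\mu(\Delta(z_j,r))$ be compared with $\int_{\Delta(z_j,r)}|f|^p\,d\mu$; the good-set points carry essentially all of $\|f\|^p_{A^p_\om}$ (Theorem~\ref{theo:dominating-set-for-f-2} makes the bad part small, not the good part) and admit no comparison with the $\mu$-average, so the discretization merely restates the obstacle. Moreover, nothing in your scheme produces a quantity that tends to $0$ with $r$, so there is no mechanism for choosing the radius $r$ promised in the statement, and the required positivity $\nu(\Delta(z,r))>0$ for \emph{all} $z$ is not secured by the perturbation $\nu=\mu+\om\mathbf{1}_{\D\setminus G}\,dA$ (a minor, fixable point, but it should be addressed).

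What is missing is an oscillation (difference) estimate, and this is exactly how the paper proceeds: Lemma~\ref{lem:difference-estimate}, a weighted version of Luecking's estimate \cite[Theorem~2.3]{L1985}, gives
\begin{equation*}
\int_\D\left(\int_{\Delta(z,r)}|f(z)-f(\zeta)|^p\,\frac{d\mu(\zeta)}{\om(\Delta(\zeta,r))}\right)\om(z)\,dA(z)\le C r^p\|f\|^p_{A^p_\om},\quad f\in A^p_\om,
\end{equation*}
the crucial point being that the auxiliary measure $\frac{\mu(\Delta(z,R))}{(1-|z|)^2}\,dA(z)$ is a $p$-Carleson measure for $A^p_\om$ whenever $\mu$ is (this is what replaces the unavailable Carleson property of $\widehat{\om}(z)(1-|z|)^{-1}dA$ in the class $\DD$). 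With this in hand, Minkowski's inequality for $p\ge1$ (or $|x-y|^p\ge|x|^p-|y|^p$ for $p<1$), the estimate $\int_{\Delta(z,r)}\frac{d\mu(\zeta)}{\om(\Delta(\zeta,r))}\gtrsim k_r(z)\ge\e\|M_\om(\mu)\|_{L^\infty}\chi_G(z)$, Fubini's theorem and the dominating-set hypothesis yield $\|f\|^p_{A^p_\om}\lesssim\int_\D|f|^p\,d\mu$ as soon as $r$ is chosen so small that $Cr^p$ is dominated by $\e\alpha\|M_\om(\mu)\|_{L^\infty}$, where $\alpha$ is the dominating constant. Your proposal contains no substitute for this step, so as written the reverse sampling inequality is not proved.
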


The proof of Theorems~\ref{theo:sampling-suff} follows the ideas of Luecking~\cite{L1985}, but a crucial step in the case of $\DD$ relies on the characterization of Carleson measures. Additionally, the presence of the weight $\om$ also makes the use of convenient changes of variables and automorphisms difficult and thus forces us to make some more delicate observations.

One can readily see from the proof that in the case of $\DD$ one may omit the extra hypothesis on the positivity of $\om$ by replacing $k_r$ by $k^\star_r(z)=\mu(\Delta(z,r))/\om(S(z))$. In this case the set $G$ may become essentially smaller and thus it being dominating set would be a stronger hypothesis.

Let $(\mu_n)$ be a sequence of measures on $\D$. We say that $(\mu_n)$ converges weakly to a measure $\mu$, denoted by $\mu_n \rightharpoonup \mu$, if
    $$
    \int_\D h(z)\,d\mu_n(z) \to \int_\D h(z)\,d\mu(z)
    $$
for all $h$ in the class $C_c(\D)$ of nonnegative continuous compactly supported functions in $\D$. The following result is a generalization of~\cite[Theorem~1]{L2000}, and completes our study of sampling measures.

\begin{theo}\label{theo:w-conv-meas}
Let $0<p<\infty$ and $\om\in\DD$, and let $(\mu_n)$ be a sequence of $p$-Carleson measures for $A^p_\om$ such that $\sup_n\|M_\om(\mu_n)\|_{L^\infty}< \infty$. Then $(\mu_n)$ has a weakly convergent subsequence.

Further, if $\mu_n \rightharpoonup \mu$, then
    \begin{equation}\label{eq:w-conv-meas}
    \lim_{n \to \infty} \int_\D |f(z)|^p \,d\mu_n(z)
    = \int_\D |f(z)|^p \,d\mu(z),\quad f \in A_\om^p,
    \end{equation}
and $\mu$ is a $p$-Carleson measure for $A^p_\om$ with $\|Id\|_{A^p_\om \to L^p_\mu}\le\liminf_{n\to\infty}\|Id\|_{A^p_\om\to L^p_{\mu_n}}$. Furthermore, if $\mu_n$ are sampling measures with sampling constants at most $\Lambda > 0$, then $\mu$ is also a sampling measure with a sampling constant at most~$\Lambda$.
\end{theo}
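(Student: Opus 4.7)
The plan is to handle the four assertions in turn: weak compactness, the limit identity~\eqref{eq:w-conv-meas}, the Carleson bound for $\mu$, and the sampling inequality; the last two follow quickly once~\eqref{eq:w-conv-meas} is in hand.

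For weak compactness, writing $M^\star := \sup_n\|M_\om(\mu_n)\|_{L^\infty}<\infty$, the Carleson-square characterisation recalled just before the theorem yields $\mu_n(S)\le M^\star\om(S)$ for every Carleson square $S$ and every $n$. Tiling $\D\setminus\overline{D(0,R)}$ by a family of Carleson squares $S(Re^{i\theta_k})$ with $\theta_k$ equispaced by $1-R$ (disjoint up to bounded overlap) gives
\[
\mu_n\bigl(\D\setminus\overline{D(0,R)}\bigr)\lesssim M^\star\,\om\bigl(\D\setminus\overline{D(0,R)}\bigr),\quad R\in[0,1),
\]
and taking $R\to 0^+$ produces the uniform total-mass bound $\mu_n(\D)\lesssim M^\star\om(\D)<\infty$. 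Helly's selection theorem then furnishes a subsequence, not relabeled, with $\mu_n\rightharpoonup\mu$ for a finite positive Radon measure $\mu$ on~$\D$.

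For \eqref{eq:w-conv-meas}, fix $f\in A^p_\om$ and $\e>0$. Since polynomials are dense in $A^p_\om$ for $\om\in\DD$, pick a polynomial $P$ with $\|f-P\|_{A^p_\om}<\e$. Applying the Carleson embedding $\|\mathrm{Id}\|_{A^p_\om\to L^p_\nu}^p\lesssim\|M_\om(\nu)\|_{L^\infty}$ uniformly in $n$ yields $\int|f-P|^p\,d\mu_n\le CM^\star\e^p$, so a Minkowski-type inequality in $L^p(\mu_n)$ reduces the proof to showing $\int|P|^p d\mu_n\to\int|P|^p d\mu$. Splitting $|P|^p = |P|^p\phi_R + |P|^p(1-\phi_R)$ with $\phi_R\in C_c(\D)$, $0\le\phi_R\le 1$, $\phi_R\equiv 1$ on $\overline{D(0,R)}$, the compact part converges by weak convergence against $|P|^p\phi_R\in C_c(\D)$, while the tail is bounded by
\[
\|P\|_{L^\infty(\overline{\D})}^p\mu_n\bigl(\D\setminus\overline{D(0,R)}\bigr)\lesssim \|P\|_{L^\infty(\overline{\D})}^p M^\star\om\bigl(\D\setminus\overline{D(0,R)}\bigr),
\]
which vanishes as $R\to 1^-$ uniformly in $n$, because $\om\in L^1(\D)$.

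For the remaining assertions I plan to test $\mu_n\rightharpoonup\mu$ against continuous approximants of indicators of Carleson squares to obtain $\mu(S)\le M^\star\om(S)$, hence $\|M_\om(\mu)\|_{L^\infty}\le M^\star$ and $\mu$ is $p$-Carleson for $A^p_\om$. The same polynomial-plus-tail decomposition applies with $\mu$ in place of $\mu_n$, closing~\eqref{eq:w-conv-meas}. The operator-norm bound $\|\mathrm{Id}\|_{A^p_\om\to L^p_\mu}\le\liminf_n\|\mathrm{Id}\|_{A^p_\om\to L^p_{\mu_n}}$ follows from~\eqref{eq:w-conv-meas} by letting $n\to\infty$ in $\int|f|^p d\mu_n\le\|\mathrm{Id}\|_{A^p_\om\to L^p_{\mu_n}}^p\|f\|_{A^p_\om}^p$, and the sampling inequalities $\Lambda^{-1}\|f\|_{A^p_\om}^p\le\int|f|^p d\mu_n\le\Lambda\|f\|_{A^p_\om}^p$ pass directly to $\mu$ in the limit via~\eqref{eq:w-conv-meas}. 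The step I expect to be most delicate is the uniform tail control on $\mu_n$: while the Carleson-square characterisation is perfectly suited to estimating $\mu_n$ on one Carleson square, assembling such estimates into quantitative decay on the annulus $\D\setminus\overline{D(0,R)}$ requires care with the combinatorics of the cover and the implicit use of $\DD$-doubling, and is the main place where the structural hypothesis on $\om$ really enters the argument.
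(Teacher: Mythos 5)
Your proposal is correct, but it reaches \eqref{eq:w-conv-meas} by a genuinely different route than the paper. The paper works with an arbitrary $f\in A^p_\om$ directly: the lower bound comes from testing weak convergence against $h|f|^p$ with $h\in C_c(\D)$, while the upper bound hinges on the uniform tail estimate $\int_{\D\setminus\overline{D(0,r)}}|f|^p\,d\mu_n\lesssim\|M_\om(\mu_n)\|_{L^\infty}\int_{\D\setminus\overline{D(0,r)}}|f|^p\om\,dA$, proved via the tent-space Carleson embedding of \cite[Theorem~9]{PR2015Embedding} and the non-tangential maximal function $N(f)$ together with the classical $H^p$ maximal theorem; this is exactly the step the authors single out as the novel ingredient compared with Luecking's argument. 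You instead approximate $f$ by a polynomial $P$ (legitimate here, since $\om$ is radial and hence polynomials are dense in $A^p_\om$, as the paper itself remarks), transfer the error uniformly in $n$ through the characterization $\|Id\|^p_{A^p_\om\to L^p_{\mu_n}}\asymp\|M_\om(\mu_n)\|_{L^\infty}$, and then need only the uniform tightness $\mu_n(\D\setminus\overline{D(0,R)})\lesssim \sup_n\|M_\om(\mu_n)\|_{L^\infty}\,\om(\D\setminus\overline{D(0,R)})$, obtained by covering the annulus with about $(1-R)^{-1}$ Carleson squares of bounded overlap; since $P$ is bounded on $\overline{\D}$, this replaces the tent-space machinery by an elementary mass bound, at the price of invoking polynomial density and using the Carleson characterization twice (for $f-P$ against $\mu_n$ and, after the square bound $\mu(S)\lesssim\om(S)$ is passed to the limit via open-set lower semicontinuity, against $\mu$). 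Two minor corrections: the covering/tightness step uses only $\mu_n(S)\le M^\star\om(S)$ and $\om\in L^1(\D)$, so, contrary to your closing remark, the hypothesis $\om\in\DD$ does not really enter there; it enters (in both proofs) through the equivalence between the $p$-Carleson property and $M_\om(\mu)\in L^\infty$ with comparable constants. Also, your weak-compactness step is a harmless variant of the paper's Banach--Alaoglu-plus-diagonalization on the discs $D(0,r)$; if you want the uniform total-mass bound without any covering, simply test the uniform embedding on $f\equiv1$ (this also disposes of the possible mass at the origin, which Carleson squares as defined in the paper do not see). The remaining assertions (the Carleson bound and norm inequality for $\mu$, and the sampling constants) are handled exactly as in the paper, by passing to the limit in \eqref{eq:w-conv-meas}.
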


The proof of Theorem~\ref{theo:w-conv-meas} follows the lines of that of~\cite[Theorem~1]{L2000}, but the crucial step which differs from the original argument relies on Carleson embedding for tent spaces given in \cite[Theorem~9]{PR2015Embedding}. These tent spaces of measurable functions are defined by using the maximal function $N(f)$.

Luecking~\cite{L2000} characterized the sampling measures for Bergman spaces induced by standard weights. However, the methods used there do not generalize for weights in the class $\DD$ because the weights in $\DD$ can be such that compositions of functions in $A^p_\om$ with M\"obius transformations cannot be controlled in norm. Thus, the problem of characterizing sampling measures for $A^p_\om$ when $\om \in \DD$ remains open. However, the closely related sampling sequences were characterized by Seip~\cite{Seip} in small weighted Bergman spaces induced by weights admitting a pointwise doubling condition in $\om$ instead of $\widehat{\om}$.

\section{Zeros and factorization when $\om\in B_\infty$}\label{sec:ProofThm1}

We begin with briefly analyzing the classes $B_q$ and $B_\infty$. For $-1<\alpha<\infty$, write $dA_\alpha(z)=(1-|z|^2)^\alpha\,dA(z)$ for short. Bekoll\'e and Bonami~\cite{B1981,BB1978} showed that for $1<p<\infty$, $P_\alpha:L^p_{\om_{[\alpha]}}\to L^p_{\om_{[\alpha]}}$ is bounded if and only if
    \begin{equation*}
    BB_{p,\alpha}(\om)=\sup_{S}\left(\frac{1}{A_\alpha(S)}\int_{S}\om(z)\,dA_\a(z)\right)
        \left(\frac{1}{A_\alpha(S)}\int_{S}\om(z)^{-\frac1{p-1}}\,dA_{\a}(z)\right)^{p-1}
        <\infty,
    \end{equation*}
where $A_\alpha(E)=\int_EdA_\alpha$ for each measurable set $E\subset\D$. Denote by $BB_{p,\a}$ the set of these weights, and write $BB_{\infty,\a}=\cup_{1<p<\infty}BB_{p,\a}$.
By comparing this to the definition of $B_q$, we see that $\om\in B_q$ if and only if $\om_{[2q-2]}\in BB_{q,2}$.
Moreover, it is known that $\om\in BB_{\infty,\a}$ if and only if there exist $\delta\in(0,1)$ and $C>0$ such that
    $$
    \frac{A_\alpha(E)}{A_\alpha(S)}\le C\left(\frac{\om(E)}{\om(S)}\right)^\d,\quad E\subset S,
    $$
for all Carleson squares $S$. This condition corresponds to the characterization of the restricted weak-type inequality for the Hardy-Littlewood maximal operator by Kerman and Torchinsky~\cite{KT1980}. For more on $A_\infty$-conditions, see \cite{DMO2016} and the references therein.

\begin{prop} Let $\om$ be an almost everywhere strictly positive weight. Then the following assertions hold:
\begin{itemize}
\item[\rm(i)] $B_p\subsetneq B_q$ for $1<p<q<\infty$;
\item[\rm(ii)] $\om\in B_q$ if and only if $W_{q,\om}\in B_{q'}$, where $W_{q,\om}(z)=(\om(z)^\frac1q(1-|z|)^2)^{-q'}$;
\item[\rm(iii)] If $\om\in B_q$, then
    $$
    \frac{A_2(E)}{|S|^2}\le B_q(\om)^\frac1q\left(\frac{\om_{[2q]}(E)}{\om_{[2q]}(S)}\right)^\frac1q,\quad E\subset S\subset\D;
    $$
\item[\rm(iv)] If there exist $q>1$, $\d\in(\frac1q,1)$ and $C>0$ such that
    \begin{equation}\label{3}
    \frac{A_2(E)}{|S|^2}\le C\left(\frac{\om_{[2q]}(E)}{\om_{[2q]}(S)}\right)^\d,\quad E\subset S\subset\D,
    \end{equation}
then $\om\in B_q$;
\item[\rm(v)] $B_q(\om)\lesssim BB_{q,0}(\om_{[\alpha]})$ for all $\alpha\le2q$.
\end{itemize}
\end{prop}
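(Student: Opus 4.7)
I would treat each of the five assertions separately: parts (i), (ii), (iii) and (v) reduce to applications of Hölder's inequality combined with the geometric fact that $(1-|z|)\asymp|S|^{1/2}$ for $z$ in a Carleson square $S$, while (iv) is the main obstacle and requires a layer-cake argument with an optimally chosen cut-off. For (i), I write $\om^{-1/(q-1)}=(\om^{-1/(p-1)})^{(p-1)/(q-1)}$ and apply Hölder with exponent $(q-1)/(p-1)>1$ to obtain
$$\int_S\om^{-1/(q-1)}\,dA\le |S|^{(q-p)/(q-1)}\left(\int_S\om^{-1/(p-1)}\,dA\right)^{(p-1)/(q-1)},$$
and combine this with $(1-|z|)^{2q}\lesssim |S|^{q-p}(1-|z|)^{2p}$ on $S$ to deduce $B_q(\om)\lesssim B_p(\om)$. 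For the strict inclusion, a direct computation shows that the standard weight $(1-|z|)^\alpha$ belongs to $B_q$ precisely when $-1<\alpha<q-1$, so any $\alpha$ with $p-1\le\alpha<q-1$ furnishes a witness in $B_q\setminus B_p$.

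For (ii), the identities $q'=q/(q-1)$ and $q'-1=1/(q-1)$ reduce the claim to bookkeeping: a direct check yields $W_{q,\om}(z)(1-|z|)^{2q'}\asymp\om(z)^{-1/(q-1)}$ and $W_{q,\om}(z)^{-1/(q'-1)}\asymp\om(z)(1-|z|)^{2q}$, so the two factors in the $B_{q'}$-expression for $W_{q,\om}$ coincide exactly with those of $B_q(\om)$, one raised to the power $1/(q-1)$. For (iii), I would split $(1-|z|)^2=\om^{1/q}(1-|z|)^2\cdot\om^{-1/q}$ and apply Hölder with conjugate exponents $q$ and $q'$ to obtain
$$A_2(E)\le\om_{[2q]}(E)^{1/q}\left(\int_E\om^{-1/(q-1)}\,dA\right)^{1/q'},$$
then enlarge the second integral to $S$ and invoke the bound $\left(\int_S\om^{-1/(q-1)}\,dA\right)^{q-1}\le B_q(\om)\,|S|^{2q}/\om_{[2q]}(S)$, which is just a rearrangement of the definition of $B_q$.

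The heart of the proof is (iv). With $E_t=\{z\in S:\om(z)^{-1/(q-1)}>t\}$, the pointwise inequality $\om(z)<t^{-(q-1)}$ on $E_t$ combined with $(1-|z|)^{2q}\asymp|S|^q$ on $S$ yields $\om_{[2q]}(E_t)\lesssim t^{-(q-1)}|S|^q|E_t|$, and this together with the hypothesis and $A_2(E_t)\asymp|S||E_t|$ for $E_t\subset S$ produces
$$|E_t|\lesssim |S|^{(1+q\delta)/(1-\delta)}\,t^{-(q-1)\delta/(1-\delta)}\,\om_{[2q]}(S)^{-\delta/(1-\delta)}.$$
The hypothesis $\delta>1/q$ is precisely equivalent to $(q-1)\delta/(1-\delta)>1$, which is exactly the condition guaranteeing convergence of the tail integral $\int_{t_0}^\infty|E_t|\,dt$. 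Splitting the layer-cake decomposition as $\int_S\om^{-1/(q-1)}\,dA=\int_0^{t_0}|E_t|\,dt+\int_{t_0}^\infty|E_t|\,dt\le t_0|S|+(\text{tail})$ and optimizing the cut-off $t_0$ yields $\int_S\om^{-1/(q-1)}\,dA\lesssim |S|^{2q/(q-1)}\om_{[2q]}(S)^{-1/(q-1)}$, which is exactly the $B_q$-condition for $\om$.

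Finally, (v) is a direct computation: on $S$ both $(1-|z|)^{2q-\alpha}$ with $2q-\alpha\ge 0$ and $(1-|z|)^{-\alpha/(q-1)}$ are comparable to fixed powers of $|S|^{1/2}$, so splitting $(1-|z|)^{2q}=(1-|z|)^\alpha(1-|z|)^{2q-\alpha}$ (and analogously for the dual factor involving $\om^{-1/(q-1)}$) reduces the $B_q$-expression for $\om$ to the $BB_{q,0}$-expression for $\om_{[\alpha]}$, with the resulting powers of $|S|$ cancelling exactly.
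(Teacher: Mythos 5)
Parts (i), (ii), (iii) and (v) of your argument are essentially the paper's own computations: H\"older's inequality together with the one-sided bound $(1-|z|)^2\lesssim|S|$ for $z\in S$ gives $B_q(\om)\lesssim B_p(\om)$, the power weights $(1-|z|)^\a$ with $p-1\le\a<q-1$ witness strictness, (ii) is the same bookkeeping with $W_{q,\om}$, and (iii) is the same H\"older split. One caveat in (v): you assert that $(1-|z|)^{2q-\a}$ and $(1-|z|)^{\a/(q-1)}$ are \emph{comparable} to powers of $|S|^{1/2}$ on $S$; this two-sided comparability is false (points of $S$ close to $\partial\D$ have $1-|z|$ far smaller than the side of $S$), but since you only need the one-sided inequality $B_q(\om)\lesssim BB_{q,0}(\om_{[\a]})$ and, as in the paper, $0\le\a\le 2q$, the upper bounds alone suffice and the computation survives.

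The genuine gap is in (iv), where the same false comparability is used in an essential way. You invoke $A_2(E_t)\asymp|S|\,|E_t|$ for $E_t=\{z\in S:\om(z)^{-1/(q-1)}>t\}$, and it is precisely the lower bound $A_2(E_t)\gtrsim|S|\,|E_t|$ that you need in order to convert the hypothesis \eqref{3} (which controls $A_2(E_t)$ by $\om_{[2q]}(E_t)$) into the weak-type estimate for the Lebesgue measure $|E_t|$ that feeds your layer-cake $\int_S\om^{-1/(q-1)}\,dA=\int_0^\infty|E_t|\,dt$. That lower bound fails whenever $E_t$ concentrates near the outer edge of $S$: for $E=\{z\in S:1-|z|<\e\ell\}$, $\ell$ the side of $S$, one has $A_2(E)\approx\e^3|S|^2$ while $|S|\,|E|\approx\e|S|^2$. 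Nothing prevents $E_t$ from living there, since it is a level set of $\om$ alone and carries no information about $1-|z|$; so your displayed bound on $|E_t|$ is unproved and the argument for (iv) does not close. The paper's proof avoids the issue by taking level sets of $\om_{[2q-2]}(z)=\om(z)(1-|z|^2)^{2q-2}$ instead of $\om$: writing $\om^{-1/(q-1)}=\om_{[2q-2]}^{-q'}\,\om_{[2q]}$ and running the distribution-function argument against the measure $\om_{[2q]}\,dA$, the inequality $\lambda\,\om_{[2q]}(E_\lambda)\le A_2(E_\lambda)$ on $E_\lambda=\{z\in S:\om_{[2q-2]}(z)<1/\lambda\}$ is an exact pointwise consequence of the definition of $E_\lambda$, with no comparison of $1-|z|$ to $|S|^{1/2}$; combined with \eqref{3} it gives the decay of $\om_{[2q]}(E_\lambda)$, and splitting at $\lambda=|S|^2/\om_{[2q]}(S)$ (your $t_0$, with the same convergence condition $q'<\frac1{1-\d}$, i.e. $\d>\frac1q$) finishes. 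To salvage your version with level sets of $\om$ you would have to localize \eqref{3} to small Carleson squares near the top edge and sum over them, i.e. supply a genuinely new argument; as written the step is not justified.
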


\begin{proof}
H\"older's inequality and the inequality $(1-|z|)^2\lesssim|S|$ for $z\in S$ imply $B_q(\om)\lesssim B_p(\om)$ for $1<p<q<\infty$, and thus $B_p\subset B_q$. The inclusion is seen to be strict by considering standard power weight $(1-|z|)^\alpha$ with $p<\a+1<q$. Thus (i) is satisfied. Moreover, since $W_{q,\om}(z)(1-|z|^2)^{2q'}=\om(z)^{-\frac1{q-1}}$ and $W_{q,\om}(z)^{-\frac1{q'-1}}=\om(z)(1-|z|^2)^{2q}$, the assertion (ii) follows by the definition of $B_q$.

To prove (iii), assume $\om\in B_q$. Then, for each $E\subset S$, H\"older's inequality and the definition of $B_q$ give
    \begin{equation*}
    \begin{split}
    A_2(E)
    &\le\left(\int_E\om_{[2q]}(z)\,dA(z)\right)^\frac1q\left(\int_S\om(z)^{-\frac{q'}{q}}\,dA(z)\right)^\frac1{q'}\\
    &\le B_q(\om)^\frac1q|S|^2\left(\frac{\int_E\om_{[2q]}(z)\,dA(z)}{\int_S\om_{[2q]}(z)\,dA(z)}\right)^\frac1q,
    \end{split}
    \end{equation*}
and thus
    $$
    \frac{A_2(E)}{|S|^2}\le B_q(\om)^\frac1q\left(\frac{\int_E\om_{[2q]}(z)\,dA(z)}{\int_S\om_{[2q]}(z)\,dA(z)}\right)^\frac1q
    $$
as claimed.

To see (iv), assume \eqref{3} and write $E_\lambda=\{z\in S:\om_{[2q-2]}(z)<1/\lambda\}$ for all $\lambda>0$. Then
    \begin{equation*}
    \lambda\om_{[2q]}(E_\lambda)\le A_2(E_\lambda)\le C|S|^2\left(\frac{\om_{[2q]}(E_\lambda)}{\om_{[2q]}(S)}\right)^\d,
    \end{equation*}
and hence
    $$
    \om_{[2q]}(E_\lambda)\le C^\frac1{1-\delta}\lambda^{-\frac{1}{1-\d}}\left(\frac{|S|^2}{\om_{[2q]}(S)^\d}\right)^\frac1{1-\d}.
    $$
Therefore, by denoting $M=|S|^2/\om_{[2q]}(S)$, we deduce
    \begin{equation*}
    \begin{split}
    \int_S\om(z)^{-\frac1{q-1}}\,dA(z)
    &=\int_S\frac{\om_{[2q]}(z)}{\left(\om_{[2q-2]}(z)\right)^{q'}}\,dA(z)
    =q'\int_0^\infty\lambda^{q'-1}\om_{[2q]}(E_\lambda)\,d\lambda\\
    &=q'\left(\int_0^M+\int_M^\infty\right)\lambda^{q'-1}\om_{[2q]}(E_\lambda)\,d\lambda\\
    &\le\om_{[2q]}(S)M^{q'}+C^\frac1{1-\d}\left(\frac{|S|^2}{\om_{[2q]}(S)^\d}\right)^\frac1{1-\d}q'\int_M^\infty\lambda^{q'-1-\frac1{1-\d}}\,d\lambda\\
    &=\frac{|S|^{2q'}}{\left(\om_{[2q]}(S)\right)^{q'-1}}+\frac{C^\frac1{1-\d}q'}{\frac1{1-\d}-q'}\left(\frac{|S|^2}{\om_{[2q]}(S)^\d}\right)^\frac1{1-\d}M^{q'-\frac1{1-\d}}\\
    &=\left(1+\frac{q(1-\d)C^{\frac1{1-\d}}}{q\d-1}\right)\frac{|S|^{2q'}}{\left(\om_{[2q]}(S)\right)^{q'-1}}
    \end{split}
    \end{equation*}
and it follows that $\om\in B_q$.

To see (v), let $q\ge\frac{\a}{2}\ge0$. Then
    \begin{equation*}
    \begin{split}
    &\om_{[2q]}(S)
    \left(\int_{S} \om(z)^{-\frac1{q-1}}\,dA(z)\right)^{q-1}\\
    &=\left(\int_{S}\om_{[\a]}(z)(1-|z|^2)^{2q-\a}\,dA(z)\right)
    \left(\int_{S} \om_{[\a]}(z)^{-\frac1{q-1}}(1-|z|^2)^\frac{\a}{q-1}\,dA(z)\right)^{q-1}\\
    &\le\left(\int_{S}\om_{[\a]}(z)\,dA(z)\right)
    \left(\int_{S} \om_{[\a]}(z)^{-\frac1{q-1}}\,dA(z)\right)^{q-1}(1-|a|^2)^{2q},
    \end{split}
    \end{equation*}
and the assertion follows.
\end{proof}

The following key proposition is a direct generalization of~\cite[Theorem~2]{L1996}.

\begin{prop}\label{prop:zeroseq}
Let $0<p<\infty$ and $\om \in B_\infty$. Let $f \in \Hcal(\D)$ and $Z \subset \Zcal(f)$. Then the function
    $$
    h(z) = \frac{|f(z)|}
        {\prod_{a \in Z} \left\{\left|\frac{a-z}{1-\overline{a}z}\right|
        \exp\left[\frac12\left(1-\left|\frac{a-z}{1-\overline{a}z}\right|^2\right)\right]\right\}},\quad z\in\D,
    $$
belongs to $L^p_\om$ if and only if $f \in A^p_\om$. Moreover, there exists a constant $C=C(\om)>0$ such that
    $$
    \|f\|_{A^p_\om}^p\le\|h\|_{L^p_\om}^p\le C\|f\|_{A^p_\om}^p, \quad f \in A^p_\om.
    $$
\end{prop}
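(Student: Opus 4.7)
The proof follows Luecking's strategy in \cite{L1996} and splits into an elementary lower bound and a substantially harder upper bound.

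\emph{Lower bound.} Setting $g(t) = \log t + \frac{1-t^2}{2}$, one has $g'(t) = \frac{1-t^2}{t} \ge 0$ on $(0,1]$ and $g(1)=0$, hence $t\exp\l(\frac{1-t^2}{2}\ri) \le 1$ for $t\in[0,1]$. Applying this with $t = |\varphi_a(z)|$ for each $a \in Z$, the product in the denominator of $h$ is at most $1$, so $h(z) \ge |f(z)|$ pointwise and the bound $\|f\|_{A^p_\om}^p \le \|h\|_{L^p_\om}^p$ follows at once.

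\emph{Upper bound.} Writing $P(z)$ for the product in the denominator of $h$, I would use the identity $1-|\varphi_a(z)|^2 = \frac{(1-|a|^2)(1-|z|^2)}{|1-\overline{a}z|^2}$ together with the Taylor expansion of $\log(1-x)$ to get
\[
-\log P(z) = \frac12\sum_{a\in Z}\sum_{n\ge 2}\frac{(1-|\varphi_a(z)|^2)^n}{n}.
\]
Away from a fixed pseudohyperbolic neighbourhood of the zero set this is dominated by a constant multiple of $\sum_a \frac{(1-|a|^2)^2(1-|z|^2)^2}{|1-\overline{a}z|^4}$, whereas on the pseudohyperbolic disc $\Delta(a,r_0)$ around an individual zero $a$ the blow-up of $1/P$ is balanced against the vanishing of $f$ by the subharmonic mean value inequality applied to $\log|f|$ on a slightly larger disc $\Delta(a,r_1)$. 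Combining these two regimes, using a standard covering argument and the relation $(1-|a|^2)^2 \asymp |\Delta(a,r_0)|$, leads, for a suitable $q>1$, to a pointwise estimate of the form
\[
h(z)^p \le C\l(R(|f|^{p/q})(z)\ri)^q,\qquad z\in\D,
\]
where $R$ is the operator introduced in the text.

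The main analytic step, in which the hypothesis $\om\in B_\infty$ enters, is the boundedness of $R:L^q_\om\to L^q_\om$ for some $q>1$. Up to absolute values and the standard weight factor $(1-|w|^2)^2$, the kernel of $R$ is that of the Bergman projection $P_2$, and so the $L^q_\om$-boundedness of $R$ is equivalent to the one-weight inequality for $P_2$ on $L^q_\om$. By the Bekoll\'e-Bonami theorem~\cite{B1981,BB1978}, this is characterized precisely by $\om\in B_q$, and is therefore guaranteed by $\om\in B_\infty=\bigcup_{q>1}B_q$. Integrating the displayed pointwise estimate against $\om\,dA$ and invoking this $L^q_\om$-boundedness of $R$ yields $\|h\|_{L^p_\om}^p \le C\|f\|_{A^p_\om}^p$ with $C=C(\om)$.

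The principal obstacle is not in the global Bergman projection step, which is furnished cleanly by Bekoll\'e-Bonami, but in the local analysis near each zero $a$ of $f$, where the corresponding factor of $P$ degenerates. This is handled exactly as in \cite{L1996} via the subharmonic mean value inequality on pseudohyperbolic discs combined with a simple covering lemma; both ingredients are weight-free and transfer to our setting without modification.
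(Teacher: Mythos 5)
Your lower bound is fine, and the final analytic step -- identifying the $L^q_\om$-boundedness of $R$ with the Bekoll\'e--Bonami condition for $P^+_2$ (hence with $\om\in B_q$ for some $q>1$) and then integrating a pointwise estimate with $\d=p/q$ -- is exactly the paper's route. The genuine gap is in how you claim to reach the pointwise estimate $h(z)^p\le C\left(R(|f|^{p/q})(z)\right)^q$. Your near/far decomposition gives, away from the zero set, only $-\log P(z)\le C\sum_{a\in Z}\frac{(1-|a|^2)^2(1-|z|^2)^2}{|1-\overline{a}z|^4}$, and this sum (essentially $k_Z(z)$) is in general unbounded; at such points you are left with $h(z)\le|f(z)|\exp\left(Ck_Z(z)\right)$, and no mechanism is offered to absorb the exponential into $R(|f|^{p/q})(z)^{q/p}$. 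The subharmonic mean value inequality for $\log|f|$ on a disc $\Delta(a,r_1)$ only neutralizes the single degenerating factor $|\vp_a(z)|$ when $z\in\Delta(a,r_0)$; neither it nor a covering lemma sees the accumulated contribution of the many zeros at moderate pseudohyperbolic distance from $z$, which is precisely what makes $h$ much larger than $|f|$. The claim that this balancing is handled ``exactly as in \cite{L1996}'' is also inaccurate: Luecking, and the paper, obtain the pointwise bound globally, with no splitting, by applying Jensen's formula to $f\circ\vp_z$ and integrating in the radius, which yields the exact identity $\log h(z)=\int_\D\log|f(w)|\,|\vp_z'(w)|^2\,dA(w)$ for $z\notin\Zcal(f)$ (after the trivial reduction to $Z=\Zcal(f)$); since $|\vp_z'(w)|^2\,dA(w)$ has unit mass, Jensen's inequality then gives $h(z)\le\left(R(|f|^\d)(z)\right)^{1/\d}$ for every $\d>0$.

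Two further omissions: the integrated Jensen formula (and the convergence of the sums you manipulate) requires $\log|f|\in L^1(\D)$, which the paper deduces from $f\in A^p_\om$ together with the integrability of $\om^{-1/(q-1)}$ for $\om\in B_q$ -- a second, essential use of the hypothesis $\om\in B_\infty$ that your sketch never addresses; and the equivalence ``$R$ bounded on $L^q_\om$ iff $\om\in B_q$'' is obtained only after the change of weight $\|R f\|_{L^q_\om}=\|P^+_2(f_{-2})\|_{L^q_{\om_{[2q]}}}$, which deserves a line. If you replace the two-regime argument by the Jensen-formula identity and supply the $\log|f|\in L^1$ step, the rest of your proposal goes through and coincides with the paper's proof.
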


\begin{pro}
Since $1-x^2 < 2\log\frac1x$ for $x \in (0,1)$, each factor in the denominator of $h$ is less than $1$. Thus the ''only if'' part along with the first inequality is trivial, and for the converse, it suffices to consider the case where $Z = \Zcal(f)$.

To see the second inequality, we start by constructing the denominator of $h$. For $f \in \Hcal(\D)$ with $f(0) \neq 0$ and the zero sequence $\Zcal(f)$, Jensen's formula gives
    \begin{equation}\label{eq:Jensen}
    \log|f(0)| + \sum_{a \in \Zcal(f)}\log\frac{r}{|a|}\chi_{[|a|,1)}(r)
    = \frac1{2\pi} \int_0^{2\pi} \log\left|f\left(re^{i\theta}\right)\right|\,d\theta.
    \end{equation}
If $f\in A^p_\om$ with $\om\in B_\infty$, then $\log|f|$ is area integrable. Indeed, since $\om \in B_\infty$, there exists $q=q(\om)> 1$ such that $\om \in B_q$, and hence
\begin{equation*}
    \begin{split}
    \int_\D \log|f(z)|\,dA(z)
    &= \int_\D \log\left(|f(z)|\om(z)^{1/p}\right)\,dA(z) - \int_\D \log\om(z)^{1/p}\,dA(z) \\
    &\leq \frac1p\|f\|_{A^p_\om}^p + \frac1p \int_\D \log\frac1{\om(z)}\,dA(z) \\
    &\leq \frac1p\|f\|_{A^p_\om}^p + \frac{q-1}p \int_\D \om(z)^{-1/(q-1)}\,dA(z)
    \end{split}
\end{equation*}
because $\log x \leq \frac1\d x^\d$ for all $x,\d > 0$. But the last integral is convergent because $\om \in B_q$, and hence $\log|f| \in L^1$. Therefore, as in~\cite[p.~348]{L1996}, an integration of \eqref{eq:Jensen} with respect to $2r\,dr$ now gives
    $$
    \log|f(0)| + \sum_{a \in \Zcal(f)}\left(\log\frac{1}{|a|} - \frac12\left(1-|a|^2\right)\right)
    = \int_\D \log|f(w)|\,dA(w),
    $$
and applying this to $w \mapsto f\left(\vp_z(w)\right)$ yields
\begin{equation*}
    \log|f(z)| + \sum_{a \in \Zcal(f)}\left[\log\frac{1}{|\vp_a(z)|} - \frac12\left(1-\left|\vp_a(z)\right|^2\right)\right]
    = \int_\D \log|f(w)| \left|\vp_z'(w)\right|^2\,dA(w)
\end{equation*}
for $z \notin \Zcal(f)$. Exponentiating this and applying Jensen's inequality then gives
\begin{equation}\label{eq:gCalc}
    \frac{|f(z)|}
        {\prod_{a \in \Zcal(f)} \left\{\left|\vp_a(z)\right|
        \exp\left[\frac12\left(1-\left|\vp_a(z)\right|^2\right)\right]\right\}}
        \leq \left(\int_\D |f(w)|^\d \left|\vp_z'(w)\right|^2\,dA(w)\right)^{1/\d}
\end{equation}
for any $\d>0$.

We next consider the linear integral operator
    $$
    R(f)(z)=\int_\D f(w)\frac{\left(1-|z|^2\right)^2}{|1-\overline{z}w|^4}\,dA(w),\quad z\in\D,
    $$
appearing on the right-hand side of~\eqref{eq:gCalc}, and will show its boundedness on $L^q_\om$. To do this, write $f_{-2}(z) = f(z)\left(1-|z|^2\right)^{-2}$ and $\om_{[x]}(z) = \om(z)\left(1-|z|^2\right)^{x}$ for short. Then $\|f\|_{L^q_\om} = \left\|f_{-2}\right\|_{L^q_{\om_{[2q]}}}$
and $\|R(f)\|_{L^q_\om} = \left\|P^+_2\left(f_{-2}\right)\right\|_{L^q_{\om_{[2q]}}}$, where
    $$
    P^+_\a(f)(z) = \int_\D \frac{f(w)}{|1-z\overline{w}|^{2+\a}} \left(1-|w|^2\right)^\a\,dA(w),\quad z\in\D,
    $$
is the maximal Bergman projection. Hence, $R : L^q_\om \to L^q_\om$ is bounded if and only if $P^+_2 : L^q_{\om_{[2q]}} \to L^q_{\om_{[2q]}}$ is bounded, and the corresponding operator norms are equal. By the characterization of Bekoll\'e and Bonami~\cite{B1981,BB1978}, this is equivalent to
    $$
    \int_{S(a)} \om_{[2q-2]}(z)(1-|z|)^2\,dA(z)
        \left(\int_{S(a)} \om_{[2q-2]}(z)^{-\frac1{q-1}}(1-|z|)^2\,dA(z)\right)^{q-1}
        \lesssim (1-|a|)^{4q},\quad a\in\D\setminus\{0\},
    $$
that is,
    $$
    \int_{S(a)} \om(z)(1-|z|)^{2q}\,dA(z)
        \left(\int_{S(a)} \om(z)^{-\frac1{q-1}}\,dA(z)\right)^{q-1}
        \lesssim (1-|a|)^{4q},\quad a\in\D\setminus\{0\}.
    $$
Thus, $R : L^q_\om \to L^q_\om$ is bounded if and only if $\om \in B_q$. Moreover, $\|R\|_{L^q_\om \to L^q_\om}\le CB_q(\om)^{\max\{1,\frac1{q-1}\}}$, where $C=C(\om,q)$, by \cite[Theorem~1.5]{PottRegueraJFA13}.

Let now $Z = \Zcal(f)$. Clearly, $f \in A^p_\om$ implies $|f|^\d \in L^{p/\d}_\om$. Choose $\d = p/q$, so that $p/\d = q$. By the boundedness of $R$ and~\eqref{eq:gCalc}, we obtain
    $$
    \|h\|_{L^p_\om}^p\le\left\|R\left(|f|^\frac{p}{q}\right)^{\frac{q}{p}}\right\|_{L^p_\om}^p = \left\|R\left(|f|^\frac{p}{q}\right)\right\|^q_{L^{q}_\om}\le C^qB_q(\om)^{\max\{q,q'\}}\|f\|^p_{A^p_\om},
    $$
where $C=C(q,\om)$. Since $q=q(\om)$, the norm estimate we are after follows.
\end{pro}

\begin{proofof}{Theorem~\ref{theo:zeroset}}
We start the proof by using the function $h$ in Proposition~\ref{prop:zeroseq} to find the analytic function $F$ in cases (c) and (d) of the theorem. To do this, we first make some observations about the functions $\psi$, $k$ and $W$ defined in Section~\ref{sec:Intro}.

Standard estimates using the power series of the exponential show that
    $$
    \left|\overline{a}\frac{a-z}{1-\overline{a}z} \exp\left(1-\overline{a}\frac{a-z}{1-\overline{a}z}\right) - 1\right| = O\left((1-|a|)^2\right), \quad |a| \to 1^-,
    $$
uniformly on compact subsets of $\D$, and thus $\psi\in\Hcal(\D)$ if $\sum_{a \in Z} (1-|a|)^2 < \infty$. This sum converges for $Z \subset \Zcal(f)$ whenever $\log |f|$ is integrable on $\D$. This, in turn, is true for any $f \in A^p_\om$ with $\om\in B_\infty$ by the proof of Proposition~\ref{prop:zeroseq}.

A direct calculation shows that
\begin{align*}
    \re\left(1-\overline{a}\frac{a-z}{1-\overline{a}z}\right)
        &= \frac12\left(1-|a|^2\right) + \frac12\left(1-\left|\frac{a-z}{1-\overline{a}z}\right|^2\right) + \frac{|z|^2}2\frac{\left(1-|a|^2\right)^2}{|1-\overline{a}z|^2},
\end{align*}
and therefore
\begin{align*}
    |\psi_Z(z)| &= \prod_{a \in Z} |a|e^{\left(1-|a|^2\right)/2}
        \prod_{a \in Z}\left\{\left|\vp_a(z)\right| \exp\left[\frac12\left(1-\left|\vp_a(z)\right|^2\right)\right]\right\} \exp\left[\frac{|z|^2}2 \sum_{a \in Z} \frac{\left(1-|a|^2\right)^2}{|1-\overline{a}z|^2}\right],
\end{align*}
see~\cite{L1996} for details. The last exponential is now exactly the function $W_Z$ defined in the first section, and the second product is the denominator of $h$ in Proposition~\ref{prop:zeroseq}. The function $h$ can thus be written as
    $$
    h(z) = C\left|\frac{f(z)}{\psi_Z(z)}\right|W_Z(z),
    $$
where $C = \prod_{a \in Z} |a|e^{\left(1-|a|^2\right)/2}$. We will soon see that $f/\psi_Z$ is the function $F$ we are after.

We first show the equivalence between (a) and (c). The calculations above together with Proposition~\ref{prop:zeroseq} show that if $f \in A^p_\om$ and $Z = \Zcal(f)$, then $\sum_{a \in Z} (1-|a|)^2 < \infty$ and $FW_Z = C^{-1}h \in L^p_\om$, where $F = f/\psi_Z$ has no zeros. Conversely, if $F$ is a nowhere zero analytic function with $FW_Z \in L^p_\om$, $\sum_{a \in Z} (1-|a|)^2 < \infty$, then $f = F\psi_Z \in A^p_\om$ because $|\psi_Z(z)| \leq W_Z(z)$ for all $z$, and clearly $\Zcal(f) = Z$.

Since considering a subsequence $Z'$ of $Z$ instead of $Z$ will only decrease the values of $k_Z$ and $W_Z$, it is clear that (a) and (b) are equivalent; (a) $\Rightarrow$ (c) $\Rightarrow$ (b) $\Rightarrow$ (a). Now, if $F$ is any (nonzero) analytic function with $FW_{Z'} \in L^p_\om$, $\sum_{a \in Z'} (1-|a|)^2 < \infty$, then $f = F\psi_{Z'} \in A^p_\om$ and $Z' \subset \Zcal(f)$. The equivalence of (a) and (b) now gives $Z' \in \Zcal(A^p_\om)$, and thus (d) implies (a). Since (c) is a special case of (d), the equivalence part of the theorem is proved.

Finally, because of the part of the proof considering the function $h$, it is clear that the last statement of the theorem is simply a restatement of Proposition~\ref{prop:zeroseq}.
\end{proofof}

\begin{proofof}{Theorem~\ref{theo:factorization}}
The first and the last inequality are obvious, so it suffices to prove the middle one. For $0<p<q<\infty$, $\om\in B_\infty$ and $f\in A^p_\om$, consider the function
    $$
    g(z)=|f(z)|^p\prod_{z_k\in\mathcal{Z}(f)}\frac{1-\frac{p}{q}+\frac{p}{q}|\vp_{z_k}(z)|^q}{|\vp_{z_k}(z)|^p},\quad z\in\D,
    $$
defined in \cite[Lemma~2]{HorFacto}, and let $h$ be as in Proposition~\ref{prop:zeroseq} with $Z=\mathcal{Z}(f)$. Let $n(r,f)$ denote the number of zeros of $f$ in $D(0,r)$, counted according to multiplicity, and
    $$
    N(r,f)=\int_0^r\frac{n(s)-n(0,f)}{s}\,ds+n(0,f)\log r
    $$
be the integrated counting function. Two integrations by parts and Jensen's formula show that
    \begin{equation*}
    \begin{split}\sum_{z_k\in\mathcal{Z}(f)}\log\left(\frac{1-\frac{p}{q}+\frac{p}{q}|z_k|^q}{|z_k|^p}\right)
    &=\int_0^1\log\left(\frac{1-\frac{p}{q}+\frac{p}{q}r^q}{r^p}\right)\,dn(r)
    =\int_0^1\frac{(\frac{q}{p}-1)(1-r^q)}{\frac
    qp-1+r^q}\frac{pn(r)}{r}\,dr\\
    &=-\int_0^1pN(r)\,du(r)
    =\int_\D\log|f(w)|^p\,d\sigma(w)-\log|f(0)|^p,
    \end{split}
    \end{equation*}
where
    $$
    d\sigma(w)=-u'(|w|)\frac{dA(w)}{2|w|},\quad u(r)=\frac{(\frac{q}{p}-1)(1-r^q)}{\frac{q}{p}-1+r^q},
    $$
is a positive measure of unit mass on $\D$. Hence
    \begin{equation*}
    \begin{split}
    \log(g(0))&=\log\left(|f(0)|^p\prod_{z_k\in\mathcal{Z}(f)}\left(\frac{1-\frac{p}{q}+\frac{p}{q}|z_k|^q}{|z_k|^p}\right)\right)\\
    &=\log|f(0)|^p+\sum_{z_k\in\mathcal{Z}(f)}\log\left(\frac{1-\frac{p}{q}+\frac{p}{q}|z_k|^q}{|z_k|^p}\right)
    =\int_\D\log|f(w)|^p\,d\sigma(w).
    \end{split}
    \end{equation*}
Replacing now $f$ by $f\circ\vp_z$ and using \cite[(3.9)]{PR2014Memoirs} to pass from $d\s$ to $dA$, we obtain
    \begin{equation*}
    \begin{split}
    \log(g(z))
    &=\int_\D\log|f(\vp_z(w))|^p\,d\sigma(w)
    \leq \int_\D \log |f(\vp_z(w))|^p\,dA(w) \\
    &=p\int_\D \log |f(\vp_z(w))|\,dA(w) = p\log h(z) = \log h(z)^p, \quad z \notin\Zcal(f).
    \end{split}
    \end{equation*}
Thus $g \leq h^p$, from which Proposition~\ref{prop:zeroseq} gives $\|g\|_{L^1_\om}\le C\|f\|_{A^p_\om}^p$ for some constant $C=C(\om)>0$. This is the statement of \cite[Lemma~3.3]{PR2014Memoirs} with the difference that now we have better control over the constant appearing on the right-hand side of the inequality and $\om$ is only required to belong to~$B_\infty$. By following the proof of \cite[Theorem~3.1]{PR2014Memoirs}, which in turn follows Horowitz' original probabilistic argument, now gives the assertion of Theorem~\ref{theo:factorization} for functions $f$ with finitely many zeros. To complete the argument used in the said proof, it suffices to show that every norm-bounded family in $A^p_\om$ with $\om\in B_\infty$ is a normal family of analytic functions. To see this, let $x>1$ such that $\om\in B_x$. Then the subharmonicity and H\"older's inequality yield
    \begin{equation*}
    \begin{split}
    |f(z)|^\frac{p}{x}&\lesssim\frac{1}{(1-|z|)^2}\int_{\Delta(z,r)}|f(\z)|^\frac{p}{x}\om(\z)^\frac1x\om(\z)^{-\frac1x}\,dA(\z)\\
    &\le\frac{1}{(1-|z|)^2}\left(\int_{\Delta(z,r)}|f(\z)|^p\om(\z)\,dA(\z)\right)^\frac1x
    \left(\int_{\Delta(z,r)}\om(\z)^{-\frac{x'}x}\,dA(\z)\right)^\frac1{x'}\\
    &\le\frac{1}{(1-|z|)^2}\|f\|_{A^p_\om}^\frac{p}x\left(\int_{\D}\om(\z)^{-\frac{x'}x}\,dA(\z)\right)^\frac1{x'},
    \end{split}
    \end{equation*}
where the constant of comparison depends only on the fixed $r\in(0,1)$. Since $\om\in B_x$, the last integral is finite, and Montel's theorem shows that every norm-bounded family in $A^p_\om$ is a normal family of analytic functions. With this guidance we consider Theorem~\ref{theo:factorization} proved.
\end{proofof}

\section{Zeros and factorization when $\om\in\DD$}\label{Sec:radial-zeros-factorization}

We will need the following technical auxiliary result~\cite[Lemma~1]{PR2015Embedding}.

\begin{letterlemma}\label{Lemma:weights-in-D-hat}
Let $\om$ be a radial weight. Then the following statements are equivalent:
\begin{itemize}
\item[\rm(i)] $\om\in\DD$;
\item[\rm(ii)] There exist $C=C(\om)>0$ and $\b=\b(\om)>0$ such that
    \begin{equation*}
    \begin{split}
    \widehat{\om}(r)\le C\left(\frac{1-r}{1-t}\right)^{\b}\widehat{\om}(t),\quad 0\le r\le t<1;
    \end{split}
    \end{equation*}
\item[\rm(iii)] There exist $C=C(\om)>0$ and $\gamma=\gamma(\om)>0$ such that
    \begin{equation*}
    \begin{split}
    \int_0^t\left(\frac{1-t}{1-s}\right)^\g\om(s)\,ds
    \le C\widehat{\om}(t),\quad 0\le t<1.
    \end{split}
    \end{equation*}
\end{itemize}
\end{letterlemma}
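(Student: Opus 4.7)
The plan is to establish the cyclic chain of implications (i) $\Rightarrow$ (ii) $\Rightarrow$ (iii) $\Rightarrow$ (i). The equivalence (i) $\Leftrightarrow$ (ii) should be essentially a matter of iterating the doubling inequality and keeping track of constants, while (ii) $\Leftrightarrow$ (iii) will hinge on a dyadic decomposition of $[0,t]$ near the endpoint.

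For (i) $\Rightarrow$ (ii), I would iterate the bound $\widehat{\om}(r)\le C\widehat{\om}(\frac{1+r}{2})$ along the sequence $r_n = 1-2^{-n}(1-r)$, which yields $\widehat{\om}(r)\le C^n\widehat{\om}(r_n)$ for every $n$. Given $r\le t<1$, I choose the smallest integer $n$ with $r_n\ge t$, i.e.\ $n=\lceil\log_2\frac{1-r}{1-t}\rceil$. Then $\widehat{\om}(r_n)\le \widehat{\om}(t)$ and $C^n\lesssim (\frac{1-r}{1-t})^{\log_2 C}$, giving (ii) with $\beta=\log_2 C$. The reverse implication (ii) $\Rightarrow$ (i) is immediate by setting $t=\frac{1+r}{2}$, so $\frac{1-r}{1-t}=2$, and obtaining the doubling constant $C\cdot 2^\beta$.

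For (ii) $\Rightarrow$ (iii), I would split $[0,t]$ into dyadic pieces $I_n=[t_{n+1},t_n]$ with $t_n=\max\{0,1-2^n(1-t)\}$. On $I_n$ we have $1-s\asymp 2^n(1-t)$, so $\bigl(\tfrac{1-t}{1-s}\bigr)^\gamma\lesssim 2^{-n\gamma}$, while (ii) gives
\[
\int_{I_n}\om(s)\,ds\le \widehat{\om}(t_{n+1})\le C\Bigl(\tfrac{1-t_{n+1}}{1-t}\Bigr)^\beta\widehat{\om}(t)\le C\cdot 2^{(n+1)\beta}\widehat{\om}(t).
\]
Multiplying and summing yields a geometric series of ratio $2^{\beta-\gamma}$, which converges precisely when $\gamma>\beta$. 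This threshold choice of $\gamma$ is the main technical point of the proof, since a smaller $\gamma$ need not suffice. Conversely, for (iii) $\Rightarrow$ (i), I would apply (iii) at the point $\frac{1+r}{2}$ and restrict the integration to $[r,\frac{1+r}{2}]$; on this range $\frac{1-t}{1-s}\ge \frac12$, so dropping the factor $(\frac{1-t}{1-s})^\gamma$ costs only a constant. The resulting bound $\int_r^{(1+r)/2}\om(s)\,ds\lesssim \widehat{\om}(\frac{1+r}{2})$ is exactly $\widehat{\om}(r)-\widehat{\om}(\frac{1+r}{2})\lesssim \widehat{\om}(\frac{1+r}{2})$, which is (i).

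The only subtle point is the constraint $\gamma>\beta$ appearing in (ii) $\Rightarrow$ (iii); all other steps are essentially bookkeeping. Everything else reduces to the iteration of the doubling inequality and to the two-sided comparison $1-s\asymp 2^n(1-t)$ on dyadic pieces shrinking toward~$t$.
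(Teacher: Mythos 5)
Your proof is correct. Note that the paper does not actually prove this statement; it is quoted as Lemma~A from \cite{PR2015Embedding} (Lemma~1 there), and your argument --- iterating the doubling inequality along $r_n=1-2^{-n}(1-r)$ to get (ii) with $\b=\log_2C$, the dyadic decomposition of $[0,t]$ with the correct threshold $\g>\b$ for (ii)$\Rightarrow$(iii), and testing (iii) on $[r,\frac{1+r}{2}]$ to recover the doubling condition --- is precisely the standard proof of that cited result, so there is nothing to add.
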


\medskip

\begin{proofof}{Theorem~\ref{Theorem:D-hat}} As explained in the introduction we begin with modifying the proof of Proposition~\ref{prop:zeroseq} so that it covers the case $\om\in\DD$. This boils down to showing \eqref{eq:step-radial-case} for sufficiently large $q=q(\om)>\max\{p,1\}$. To prove \eqref{eq:step-radial-case}, let $q\ge2$ and $k(\z)=(1-|\z|)^\e$, where $\e<1-1/q$ is fixed. Writing $I(f)$ for the left-hand side of \eqref{eq:step-radial-case} and using H\"older's inequality and Fubini's theorem, we have
    \begin{equation*}
    \begin{split}
    I(f)&=\int_\D\left(\int_\D |f(\z)|^\frac{p}{q}\frac{k(\z)}{|1-\overline{z}\z|^2}\frac{k^{-1}(\z)}{|1-\overline{z}\z|^2}\,dA(\z)\right)^q\om_{[2q]}(z)\,dA(z)\\
    &\le\int_\D|f(\z)|^pk(\z)^q\left(\int_\D\frac{\om_{[2q]}(z)}{|1-\overline{z}\z|^{2q}}
    \left(\int_\D\frac{dA(u)}{k(u)^{q'}|1-\overline{z}u|^{2q'}}\right)^{q-1}dA(z)\right)dA(\z).
    \end{split}
    \end{equation*}
Since $\e<1-1/q$,
    \begin{equation*}
    \begin{split}
    \int_\D\frac{\om_{[2q]}(z)}{|1-\overline{z}\z|^{2q}}
    \left(\int_\D\frac{dA(u)}{k(u)^{q'}|1-\overline{z}u|^{2q'}}\right)^{q-1}dA(z)
    &\asymp\int_\D\frac{\om_{[2q-2-\e q]}(z)}{|1-\overline{z}\z|^{2q}}\,dA(z)\\
    &\asymp\int_0^1\frac{\om(s)(1-s)^{2q-2-\e q}}{(1-|\z|s)^{2q-1}}\,ds,
    \end{split}
    \end{equation*}
and therefore $I(f)\lesssim\|f\|_{L^p_\mu}^p$, where
     \begin{equation*}
    \begin{split}
    d\mu(\z)=(1-|\z|)^{\e q}\left(\int_0^{|\z|}\frac{\om(s)}{(1-s)^{1+\e q}}\,ds
    +(1-|\z|)^{1-2q}\int_{|\z|}^1\om_{[2q-2-\e q]}(s)\,ds\right)dA(\z).
    \end{split}
    \end{equation*}
To establish \eqref{eq:step-radial-case}, it now suffices to show that $\mu$ is a $p$-Carleson measure for $A^p_\om$, that is, $\mu(S)\lesssim\om(S)$ for all Carleson squares $S$ by \cite[Theorem~1]{PR2015Embedding}. By Fubini's theorem and the inequality $\e q-2q+1<-q<-1$, the term corresponding to the second summand satisfies
    \begin{equation*}
    \begin{split}
    \int_r^1(1-t)^{\e q+1-2q}\int_{t}^1\om_{[2q-2-\e q]}(s)\,ds\,dt
    \le\int_r^1\om_{[2q-2-\e q]}(s)\int_0^s(1-t)^{\e q-2q+1}\,dt\,ds
    \lesssim\widehat{\om}(r)
    \end{split}
    \end{equation*}
while a similar reasoning (divide the integral from $0$ to $t$ at $r$) for the first term shows that
     \begin{equation*}
    \begin{split}
    \int_r^1(1-t)^{\e q}\left(\int_0^{t}\frac{\om(s)}{(1-s)^{1+\e q}}\,ds\right)dt
    \lesssim(1-r)^{\e q+1}\int_0^{r}\frac{\om(s)}{(1-s)^{1+\e q}}\,ds+\widehat{\om}(r).
    \end{split}
    \end{equation*}
By choosing $\e q$ sufficiently large, Lemma~\ref{Lemma:weights-in-D-hat}(iii) gives what we want. Therefore we have proved the statement of Proposition~\ref{prop:zeroseq} for $\om\in\DD$. The proofs of Theorems~\ref{theo:zeroset} and \ref{theo:factorization} now work as such and thus the statements of these results as well as Corollaries~\ref{theo:zeroset2} and~\ref{coro:zeroPerturb} are valid for $\om\in\DD$.
\end{proofof}

\medskip

\begin{proofof}{Corollary~\ref{corollary:Hankel}} For each $p>1$ and $\om\in\DDD$, the dual of $A^p_\om$ can be identified with $A^{p'}_\om$ via the $A^2_\om$-pairing $\langle f,g\rangle_{A^2_\om}=\lim_{r\to1^-}\int_\D f_r\overline{g_r}\om\,dA$, where $f_r(z)=f(rz)$, by the proofs of \cite[Theorem~6 and Corollary~7]{PR2016TwoWeight}. Therefore the boundedness of $h^\om_{\overline{f}}:A^p_\om\to\overline{A^q_\om}$ is equivalent to
    \begin{equation}\label{Equation:hankel}
    \begin{split}
    |\langle h^\om_{\overline{f}}(g),h\rangle_{A^2_\om}|
    &=\left|\lim_{r\to1^-}\int_\D \overline{f(z)}g(z)\overline{h(r^2z)}\om(z)\,dA(z)\right|\\
    &\lesssim\|g\|_{A^p_\om}\|h\|_{L^{q'}_\om},\quad g\in A^p_\om,\quad h\in \overline{A^{q'}_\om}.
    \end{split}
    \end{equation}
If $f\in A^s_\om$ with $\frac1s=\frac1q-\frac1p$, then H\"older's inequality implies $|\langle h^\om_{\overline{f}}(g),h\rangle_{A^2_\om}|\le\|g\|_{A^p_\om}\|h\|_{L^{q'}_\om}\|f\|_{A^s_\om}$, and hence
$h^\om_{\overline{f}}:A^p_\om\to\overline{A^q_\om}$ is bounded and $\|h^\om_{\overline{f}}\|_{A^p_\om\to\overline{A^q_\om}}\le\|f\|_{A^s_\om}$.

Conversely, assume that $h^\om_{\overline{f}}:A^p_\om\to\overline{A^q_\om}$ is bounded. By Theorem~\ref{Theorem:D-hat} each $k\in A^{s'}_\om$ can be factorized as $k=g\overline{h}$, where $g\in A^p_\om$ and $\overline{h}\in A^{q'}_\om$ with $\|g\|_{A^p_\om}\|\overline{h}\|_{A^{q'}_\om}\asymp\|k\|_{A^{s'}_\om}$. Therefore \eqref{Equation:hankel} implies $|\langle f,k\rangle_{A^2_\om}|\lesssim\|k\|_{A^{s'}_\om}$ for all $k\in A^{s'}_\om$. The duality $(A^s_\om)^*\simeq A^{s'}_\om$ now yields $f\in A^s_\om$. 
\end{proofof}

\section{Dominating sets}

In this section we prove our results on dominating sets. We begin with Theorems~\ref{theo:dominating-set-for-f} and~\ref{theo:dominating-set-for-f-2} concerning individual functions.

\medskip

\begin{proofof}{Theorem~\ref{theo:dominating-set-for-f}} By \cite[Lemma~8]{PRS2015Berezin}, there exists $r=r(\om)\in(0,1)$ such that $|B^\om_z(\z)|\asymp B^\om_z(z)$ for all $\z\in\Delta(z,r)$ and $z\in\D$. Further, $\|B^\om_z\|_{A^2_\om}^2\asymp(\omh(z)(1-|z|))^{-1}$ by \cite[Theorem~1]{PR2016TwoWeight}, and $\omh$ is essentially constant in each hyperbolically bounded set by Lemma~\ref{Lemma:weights-in-D-hat}(ii). By using these facts and the subharmonicity of $|f|^q$  we deduce
    \begin{equation*}
    |f(z)|^q\lesssim\int_{\Delta(z,r)}|f(\z)|^q\frac{|B^\om_z(\z)|^2}{\|B^\om_z\|_{A^2_\om}^2}\frac{\omh(\z)}{1-|\z|}\,dA(\z), \quad z \in \D,
    \end{equation*}
for all $f\in\Hcal(\D)$. Therefore~\eqref{eq:PointEval} is proved once we have shown that there exists a constant $C=C(q,r,\om)>0$ such that
    \begin{equation}\label{eq:carleson-application}
    \int_{\Delta(z,r)}|f(\z)|^q|B^\om_z(\z)|^2\frac{\omh(\z)}{1-|\z|}\,dA(\z)
    \le C\int_\D |f(\z)|^q|B^\om_z(\z)|^2\om(\z)\,dA(\z), \quad z \in \D,
    \end{equation}
for all $f \in A^q_\om$. This looks like a consequence of a Carleson embedding theorem, but these theorems do not seem efficiently applicable as such since $|f|^q|B^\om_z|^2$ can not be written as a power of the modulus of a single analytic function because both $f$ and $B^\om_z$ may have zeros. However, a careful inspection of the proof of \cite[Theorem~3.3]{PelSum14} shows that the argument, and in particular the proof of \cite[Lemma~3.2]{PelSum14}, carries over if any positive power of the modulus of the function involved is subharmonic. Now that $|f|^q|B^\om_z|^2$ has this local property because both $f$ and $B^\om_z$ are analytic, we deduce \eqref{eq:carleson-application} if the measure $\mu_z$ defined by $d\mu_z(\z) = \frac{\omh(\z)}{1-|\z|}\chi_{\Delta(z,r)}(\z)\,dA(\z)$ is a $1$-Carleson measure for $A^1_\om$ with $\|M_\om(\mu_z)\|_{L^\infty}$ uniformly bounded in~$z$, that is, if $\mu_z(S(a)) \lesssim \om(S(a))$ for all $a \in \D\setminus\{0\}$ and $z \in \D$. But clearly, $\Delta(z,r)\subset\{se^{i\t}:\r\le s\le\r+x(1-\r),\,\,|\t-\vp|\le x(1-\r)\}$ for some $x=x(r)\in(0,1)$, $\r=\r(z,r)\in(0,1)$ and $\vp=\vp(z)$. If now $|a|\ge \r+x(1-\r)$ there is nothing to prove, while for otherwise
    \begin{equation*}
    \begin{split}
    \int_{S(a)\cap\Delta(z,r)}\frac{\omh(\z)}{1-|\z|}\,dA(\z)
    &\le(1-|a|)\int_{\max\{|a|,\r\}}^{\r+x(1-\r)}\frac{\omh(s)}{1-s}\,ds\\
    &\le(1-|a|)\widehat{\om}(a)\frac{\r+x(1-\r)-\max\{|a|,\r\}}{1-\r-x(1-\r)}\\
    &\le(1-|a|)\widehat{\om}(a)\frac{x}{1-x}.
    \end{split}
    \end{equation*}
It follows that $\mu_z(S(a)) \lesssim \om(S(a))$ for all $a \in \D\setminus\{0\}$ and $z \in \D$, and thus \eqref{eq:PointEval} is proved.

Let now $f\in A^p_\om$. Then~\eqref{eq:BadPoints} implies
    $$
    |f(z)|^q\chi_E(z) \le \e \int_\D |f(\z)|^q K^\om_z(\z)\om(\z)\,dA(\z),\quad z\in\D.
    $$
Raising this to power $p/q$ and integrating with respect to $\om\,dA$ now gives
    $$
    \int_E |f(z)|^p\om(z)\,dA(z)
    \le \e^{p/q} \int_\D \left(\int_\D |f(\z)|^q K^\om_z(\z)\om(\z)\,dA(\z)\right)^{p/q}\om(z)\,dA(z)=\e^{p/q}I(f).
    $$
Therefore~\eqref{eq:BadPointsEstimate} will be proved if we can show that $I(f)$ is dominated by a constant times $\|f\|_{A^p_\om}^p$. If $\om\in\DDD$, there is an easy to way to deduce this because the maximal Bergman projection $P^+_\om:L^p_\om\to L^p_\om$ is bounded for each $1<p<\infty$ by the proof of \cite[Theorem~3]{PR2016TwoWeight}. To see how~\eqref{eq:BadPointsEstimate} is obtained from this fact, denote $s = p/q$ and let $1 < x < s$ and $g \in L^s_\om$. H\"older's inequality then gives
    \begin{equation*}
    \begin{split}
	\int_\D &\left|\int_\D g(\xi) K^\om_z(\z)\om(\z)\,dA(\z)\right|^s\om(z)\,dA(z) \\
    &\le \int_\D \left(\int_\D |g(\z)|^x |B_z^\om(\z)| \om(\z)\,dA(\z)\right)^{s/x} \left(\int_\D |B_z^\om(\z)|^{\frac{2x-1}{x-1}}\om(\z)\,dA(\z)\right)^{s\frac{x-1}x} \frac{\om(z)}{\|B_z^\om\|^{2s}_{A^2_\om}}\,dA(z) \\
		&\leq \int_\D \left(\int_\D |g(\z)|^x |B_z^\om(\z)| \om(\z)\,dA(\z)\right)^{s/x}\om(z)\,dA(z)\, \sup_{z\in\D} \frac{\|B_z^\om\|^{s\frac{2x-1}x}_{A^y_\om}}{\|B_z^\om\|^{2s}_{A^2_\om}},
\end{split}
\end{equation*}
where $y = \frac{2x-1}{x-1}$. Since $s/x > 1$ and $|g|^x \in L^{s/x}_\om$,~the maximal Bergman projection $P^+_\om:L^\frac{s}{x}_\om\to L^\frac{s}{x}_\om$ is bounded, and hence
	$$
	\int_\D \left(\int_\D |g(\xi)|^x |B_z^\om(\xi)| \om(\xi)\,dA(\xi)\right)^{s/x}\om(z)\,dA(z) \lesssim \|g\|^s_{L^s_\om}.
	$$
On the other hand,~\cite[Theorem~1]{PR2016TwoWeight} gives
	$$
	\frac{\|B_z^\om\|^{s\frac{2x-1}x}_{A^y_\om}}{\|B_z^\om\|^{2s}_{A^2_\om}}
		\asymp \frac{\omh(z)^s(1-|z|)^s}{\left(\omh(z)^{y-1}(1-|z|)^{y-1}\right)^{s\frac{2x-1}{xy}}} = 1,\quad |z| \to 1^-,
	$$
because
	$$
	(y-1)s\frac{2x-1}{xy} = \frac{2x-1-(x-1)}{x-1}s\frac{2x-1}{x\frac{2x-1}{x-1}} = \frac{x}{x-1}s\frac{x-1}{x} = s.
	$$
This proves~\eqref{eq:BadPointsEstimate} for $\om\in\DDD\subsetneq\DD$. The proof for the whole class $\DD$ is more laborious but it also relies strongly on the kernel estimates used in the above reasoning. We argue as follows. Let $k(\z)=\widehat{\om}(\z)^\e$, where $\e<1-\frac{q}{p}$. Since $\|B^\om_z\|_{A^2_\om}^2\asymp(\widehat{\om}(z)(1-|z|))^{-1}$, H\"older's inequality and Fubini's theorem yield
    \begin{equation*}
    \begin{split}
    I(f)
    &\asymp\int_\D \left(\int_\D |f(\z)|^q|B^\om_z(\z)|k(\z)|B^\om_z(\z)|k(\z)^{-1}\om(\z)\,dA(\z)\right)^{p/q}(\widehat{\om}(z)(1-|z|))^{\frac{p}{q}}\om(z)\,dA(z)\\
    &\le\int_\D\left(\int_\D |f(\z)|^p|B^\om_z(\z)|^\frac{p}{q}k(\z)^\frac{p}{q}\om(\z)\,dA(\z)\right)\\
    &\quad\cdot\left(\int_\D|B^\om_z(u)|^{\frac{p}{p-q}}k(u)^{-\frac{p}{p-q}}\om(u)\,dA(u)\right)^{\frac{p-q}{q}}(\widehat{\om}(z)(1-|z|))^{\frac{p}{q}}\om(z)\,dA(z)\\
    &=\int_\D|f(\z)|^pk(\z)^\frac{p}{q}\om(\z)\left(\int_\D |B^\om_z(\z)|^\frac{p}{q}(\widehat{\om}(z)(1-|z|))^{\frac{p}{q}}\om(z)\right.\\
    &\quad\cdot\left.\left(\int_\D|B^\om_z(u)|^{\frac{p}{p-q}}k(u)^{-\frac{p}{p-q}}\om(u)\,dA(u)\right)^{\frac{p-q}{q}}\,dA(z)\right)\,dA(\z),
    \end{split}
    \end{equation*}
where
    \begin{equation*}
    \begin{split}
    \int_\D|B^\om_z(u)|^{\frac{p}{p-q}}k(u)^{-\frac{p}{p-q}}\om(u)\,dA(u)
    &\lesssim\int_0^{|z|}\frac{\int_t^1\widehat{\om}(s)^{-\frac{\e p}{p-q}}\om(s)\,ds}{\widehat{\om}(t)^{\frac{p}{p-q}}(1-t)^\frac{p}{p-q}}\,dt\\
    &\asymp\int_0^{|z|}\frac{dt}{\widehat{\om}(t)^{\frac{p}{p-q}-1+\frac{\e p}{p-q}}(1-t)^\frac{p}{p-q}}\\
    &\asymp\frac{1}{\widehat{\om}(z)^{\frac{p}{p-q}-1+\frac{\e p}{p-q}}(1-|z|)^{\frac{p}{p-q}-1}}
    \end{split}
    \end{equation*}
by~\cite[Theorem~1]{PR2016TwoWeight}. This together with another application of ~\cite[Theorem~1]{PR2016TwoWeight} gives
    \begin{equation*}
    \begin{split}
    I(f)
    &\lesssim\int_\D|f(\z)|^p\widehat{\om}(\z)^\frac{\e p}{q}\om(\z)\left(\int_\D |B^\om_z(\z)|^\frac{p}{q}\widehat{\om}(z)^{\frac{p}{q}-1-\frac{\e p}{q}}\om(z)(1-|z|)^{\frac{p-q}{q}}\,dA(z)\right)dA(\z)\\
    &\lesssim\int_\D|f(\z)|^p\widehat{\om}(\z)^\frac{\e p}{q}\om(\z)\left(\int_0^{|\z|} \frac{\int_t^1\widehat{\om}(s)^{\frac{p}{q}-1-\frac{\e p}{q}}\om(s)(1-s)^{\frac{p-q}{q}}\,ds}{\widehat{\om}(t)^\frac{p}{q}(1-t)^\frac{p}{q}}\,dt\right)dA(\z).
    \end{split}
    \end{equation*}
The integral from $0$ to $|\z|$ equals to $I_1+I_2$, where
    \begin{equation*}
    \begin{split}
    I_1&=\left(\int_{|\z|}^1\widehat{\om}(s)^{\frac{p}{q}-1-\frac{\e p}{q}}\om(s)(1-s)^{\frac{p-q}{q}}\,ds\right)
    \int_0^{|\z|} \frac{dt}{\widehat{\om}(t)^\frac{p}{q}(1-t)^\frac{p}{q}}\\
    &\lesssim\left((1-|\z|)^\frac{p-q}{q}\widehat{\om}(\z)^{\frac{p}{q}-\frac{\e p}{q}}\right)
    \frac{1}{\widehat{\om}(\z)^\frac{p}{q}(1-|\z|)^{\frac{p}{q}-1}}=\widehat{\om}(\z)^{-\frac{\e p}{q}}
    \end{split}
    \end{equation*}
and, by Fubini's theorem,
    \begin{equation*}
    \begin{split}
    I_2&=\int_0^{|\z|} \frac{\int_{t}^{|\z|}\widehat{\om}(s)^{\frac{p}{q}-1-\frac{\e p}{q}}\om(s)(1-s)^{\frac{p-q}{q}}\,ds}{\widehat{\om}(t)^\frac{p}{q}(1-t)^\frac{p}{q}}\,dt\\
    &=\int_0^{|\z|}\widehat{\om}(s)^{\frac{p}{q}-1-\frac{\e p}{q}}\om(s)(1-s)^{\frac{p-q}{q}}
    \left(\int_{0}^{s}\frac{dt}{\widehat{\om}(t)^\frac{p}{q}(1-t)^\frac{p}{q}}\right)ds\\
    &\asymp\int_0^{|\z|}\widehat{\om}(s)^{-1-\frac{\e p}{q}}\om(s)\,ds\asymp\widehat{\om}(\z)^{-\frac{\e p}{q}}.
    \end{split}
    \end{equation*}
We deduce $I(f)\lesssim\|f\|_{A^p_\om}^p$, and the theorem is proved.
\end{proofof}

\medskip

\begin{proofof}{Theorem~\ref{theo:dominating-set-for-f-2}}
By the definition of the set $E$ and Fubini's theorem,
    \begin{equation*}
    \begin{split}
    \int_E |f(z)|^p\om(z)\,dA(z)
    &\le \e \int_E \left(\frac1{\nu(E(z))}\int_{E(z)}|f(\zeta)|^p\nu(\zeta)\,dA(\zeta)\right)\om(z)\,dA(z)\\
    &=\e \int_\D |f(\zeta)|^p\left(\int_E \frac{\chi_{E(z)}(\zeta)}{\nu(E(z))} \om(z)\,dA(z)\right)\nu(\zeta)\,dA(\zeta).
    \end{split}
    \end{equation*}
Therefore to complete the proof, it suffices to show that the measure $\mu$ defined by
    $$
    d\mu(\z) =\left(\int_E \frac{\chi_{E(z)}(\zeta)}{\nu(E(z))} \om(z)\,dA(z)\right)\nu(\zeta)\,dA(\zeta),\quad \z\in\D,
    $$
is a $p$-Carleson measure for $A^p_\om$. To see this, let $a\in\D\setminus\{0\}$. Then Fubini's theorem gives
    \begin{equation*}
    \begin{split}
    \mu(S(a))
    &=\int_{S(a)}\left(\int_E \frac{\chi_{E(z)}(\zeta)}{\nu(E(z))} \om(z)\,dA(z)\right)\nu(\zeta)\,dA(\zeta)
    =\int_E\frac{\nu(S(a) \cap E(z))}{\nu(E(z))} \om(z)\,dA(z)\\
    &\le\int_{\{z:S(a)\cap \Delta(z,r)\ne\emptyset\}}\om(z)\,dA(z).
    \end{split}
    \end{equation*}
Now that there exist $C=C(r)>0$ and $R=R(r,C)\in(0,1)$ such that $\{z:S(a)\cap \Delta(z,r)\ne\emptyset\}\subset S(b)$ for some $b=b(a)\in\D\setminus\{0\}$ with $\arg b=\arg a$ and $1-|b|=C(1-|a|)$ and for all $|a|\ge R$, we deduce
    \begin{equation*}
    \begin{split}
    \mu(S(a))
    &\le\om(S(b)) \le C(1-|a|) \int_{1-C(1-|a|)}^1\om(s)\,ds\\
    &\lesssim C(1-|a|)\left(\frac{C(1-|a|)}{1-|a|}\right)^\b\widehat{\om}(a)\asymp
    \om(S(a)), \quad |a|\ge R,
    \end{split}
    \end{equation*}
by the hypothesis $\om\in\DD$ and Lemma~\ref{Lemma:weights-in-D-hat}. Consequently, $\mu$ is a $p$-Carleson measure for $A^p_\om$ by \cite[Theorem~1]{PR2015Embedding}, and the lemma is proved.
\end{proofof}

We next proceed towards the proof of Theorem~\ref{theo:dominating-sets-non-radial}.

\begin{lem}\label{Lemma:test-functions-non-radial}
Let $\om$ be a weight on $\D$ such that $\om(S(a))>0$ for all $a\in\D\setminus\{0\}$. Then the following statements are equivalent:
    \begin{itemize}
    \item[\rm(i)] $\om\in\DD(\D)$;
    \item[\rm(ii)] there exist $\b=\b(\om)>0$ and $C=C(\om)\ge1$ such that
        $$
        \frac{\om(S(a))}{(1-|a|)^\b}\le C\frac{\om(S(a'))}{(1-|a'|)^\b},\quad 0<|a|\le|a'|<1,\quad\arg a=\arg a';
        $$
    \item[\rm(iii)] for some (equivalently for each) $K>0$ there exists $C=C(\om,K)>0$ such that
        $$
        \om(S(a))\le C\om\left(S\left(\frac{K+|a|}{K+1}e^{i\arg a}\right)\right),\quad a\in\D\setminus\{0\};
        $$
    \item[\rm(iv)] there exist $\eta=\eta(\om)>0$ and $C=C(\eta,\om)>0$ such that
        $$
        \displaystyle
        \int_\D\frac{\om(z)}{|1-\overline{a}z|^\eta}\,dA(z)\le C\frac{\om(S(a))}{(1-|a|)^\eta},\quad a\in\D\setminus\{0\}.
        $$
    \end{itemize}
\end{lem}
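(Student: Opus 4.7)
My plan is to run the circuit (i)$\Leftrightarrow$(iii), (i)$\Leftrightarrow$(ii), (ii)$\Rightarrow$(iv), (iv)$\Rightarrow$(i). Throughout I would use the basic monotonicity that $S(a) \subset S(a')$, and thus $\om(S(a)) \le \om(S(a'))$, whenever $\arg a = \arg a'$ and $|a'| \le |a|$; this is immediate from the definition of the Carleson square (as $|a'|$ decreases, both the radial range and the angular opening of $S$ grow).

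The equivalence (i)$\Leftrightarrow$(iii) would follow by iteration. Applying (i) $n$ times yields
$$
\om(S(a)) \le C^n \om\bigl(S(a^{(n)})\bigr), \qquad a^{(n)} = \bigl(1-2^{-n}(1-|a|)\bigr) e^{i\arg a},
$$
and for any prescribed $K > 0$ I would pick the smallest $n$ with $2^{-n} \le K/(K+1)$, so that $|a^{(n)}| \ge (K+|a|)/(K+1)$, and monotonicity produces (iii). Conversely, (iii) at $K=1$ is exactly (i), and the equivalence between different values of $K$ in (iii) comes from the same iteration applied to (iii) with any fixed $K_0$. For (i)$\Leftrightarrow$(ii), the implication (ii)$\Rightarrow$(i) is immediate on specializing to $a' = \tfrac{1+|a|}{2}e^{i\arg a}$, while for (i)$\Rightarrow$(ii), given $a'$ on the ray through $a$ with $|a'|\ge|a|$, I would pick the smallest $n$ with $2^{-n}(1-|a|) \le 1-|a'|$. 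Then $|a^{(n)}|\ge|a'|$, so $S(a^{(n)}) \subset S(a')$ and $\om(S(a)) \le C^n \om(S(a'))$; since $n \le 1 + \log_2((1-|a|)/(1-|a'|))$, this delivers (ii) with $\b = \log_2 C$.

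The geometric heart is (ii)$\Rightarrow$(iv), which I would establish by a dyadic decomposition. Partition $\D$ by the layers
$$
E_n = \{z\in\D : 2^n(1-|a|) \le |1-\overline{a}z| < 2^{n+1}(1-|a|)\}, \quad n \ge 0,
$$
which cover $\D$ because $|1-\overline{a}z| \ge 1-|a|$ on $\overline{\D}$. An elementary check shows that the sublevel set $\{z\in\D : |1-\overline{a}z| < \delta\}$ is contained in a Carleson square $S(b_\delta)$ on the ray through $a$ with $1-|b_\delta| \asymp \delta$ when $\delta$ is small (and $|b_\delta| \to 0$ as $\delta$ grows). By (ii), $\om(S(b_\delta)) \lesssim (\delta/(1-|a|))^\b \om(S(a))$, and hence
$$
\int_\D \frac{\om(z)}{|1-\overline{a}z|^\eta}\,dA(z) \le \sum_{n\ge 0} \frac{\om(E_n)}{(2^n(1-|a|))^\eta} \lesssim \frac{\om(S(a))}{(1-|a|)^\eta}\sum_{n\ge 0} 2^{n(\b-\eta)},
$$
a geometric series that converges whenever $\eta > \b$, giving (iv).

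Finally, for (iv)$\Rightarrow$(i) I would set $a_1 = \tfrac{1+|a|}{2}e^{i\arg a}$ and apply (iv) at $a_1$. For $z = re^{i\theta} \in S(a)$, the identity $|1-se^{i\phi}|^2 = (1-s)^2 + 4s\sin^2(\phi/2)$ with $s = (1+|a|)r/2$ and $\phi = \theta - \arg a$ yields $|1-\overline{a_1}z| \lesssim 1-|a|$ uniformly on $S(a)$, so
$$
\frac{\om(S(a))}{(1-|a|)^\eta} \lesssim \int_{S(a)} \frac{\om(z)}{|1-\overline{a_1}z|^\eta}\,dA(z) \le \int_\D \frac{\om(z)}{|1-\overline{a_1}z|^\eta}\,dA(z) \le \frac{C\,\om(S(a_1))}{(1-|a_1|)^\eta} = \frac{2^\eta C\,\om(S(a_1))}{(1-|a|)^\eta},
$$
which delivers (i). The step I expect to be the most technical is (ii)$\Rightarrow$(iv): the containment of each dyadic layer inside a Carleson square of the right size needs geometric care, in particular in the regime where $2^n(1-|a|)$ becomes comparable to or exceeds $1$ and the associated "Carleson square" degenerates into a large angular sector near the origin.
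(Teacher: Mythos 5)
Most of your circuit is the paper's argument in a slightly reshuffled order: (i)$\Rightarrow$(ii) by iterating the doubling along the radii $1-2^{-n}(1-|a|)$ with $\b=\log_2 C$, (ii)$\Rightarrow$(i) and (ii)$\Rightarrow$(iii) by specializing, and (iv)$\Rightarrow$(i) by testing the kernel at $a_1=\tfrac{1+|a|}{2}e^{i\arg a}$ are exactly what the paper does (one small slip: to get $|a^{(n)}|\ge\tfrac{K+|a|}{K+1}$ you need $2^{-n}\le\tfrac{1}{K+1}$, not $2^{-n}\le\tfrac{K}{K+1}$). The step that is not yet a proof is the one you flagged, (ii)$\Rightarrow$(iv), and the problem there is not merely ``geometric care''. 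The containment $\{z\in\D:|1-\overline{a}z|<\delta\}\subset S(b_\delta)$ with $1-|b_\delta|\asymp\delta$ holds only for $\delta$ small (say $\delta\le\tfrac1{2\pi}$): every Carleson square has angular width $1-|b|<1$, whereas once $\delta\gtrsim1$ the sublevel set contains points at angular distance up to $\pi$ from $a$ (and eventually $z=0$), so it is contained in no Carleson square at all; and containing it in a ``large angular sector'' is useless, because (ii) only compares squares lying on the same ray and gives no control of $\om$ on sectors or on squares with a different argument.

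What is needed for the $O(1)$ remaining layers with $2^{n}(1-|a|)\gtrsim1$ is a different estimate: there $|1-\overline{a}z|\gtrsim1$, so their total contribution is $\lesssim\om(\D)$, and one must show $\om(\D)\lesssim\om(S(a))(1-|a|)^{-\eta}$ uniformly in $a$. This does not follow from (ii) alone; it is precisely where the standing hypothesis $\om(S(b))>0$ for every $b$ has to enter. From (ii), for $|a|\ge\tfrac12$ one gets $\om(S(a))(1-|a|)^{-\b}\gtrsim\om\bigl(S(\tfrac12 e^{i\arg a})\bigr)$, and a finite family of squares $S(\tfrac78 e^{i\theta_j})$, with $(\theta_j)$ a fine net of the circle, each contained in every $S(\tfrac12 e^{i\theta})$ with $|\theta-\theta_j|$ small, gives $\inf_\theta\om\bigl(S(\tfrac12 e^{i\theta})\bigr)>0$; hence $1\lesssim\om(S(a))(1-|a|)^{-\eta}$ for any $\eta\ge\b$ (the case $|a|<\tfrac12$ is handled the same way). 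With this far-field estimate added, your dyadic-layer decomposition does yield (iv) for every $\eta>\b$. The paper meets the same obstacle in its (iii)$\Rightarrow$(iv) step and resolves it in the analogous way: it uses nested squares $S(a_n)$ of ratio $K+1$ instead of your level sets, bounds the integral over $\D\setminus S(a_N)$ by an absolute constant, and then absorbs that constant by taking $\eta$ so large that $\om(S(a))(1-|a|)^{-\eta}$ is essentially increasing along rays, which again rests on the positivity of $\om$ on Carleson squares. So: right architecture, but the degenerate regime is a genuine missing piece, and closing it requires invoking the hypothesis $\om(S(a))>0$, which your sketch never uses.
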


\begin{proof}
Assume (i), and let $r_n=1-2^{-n}$ for $n\in\N\cup\{0\}$. Let $0<|a|\le|a'|<1$ with $\arg a=\arg a'$, and take $k,m\in\N\cup\{0\}$ such that $r_k\le|a|<r_{k+1}$ and $r_m\le|a'|<r_{m+1}$. Then
    \begin{equation*}
    \begin{split}
    \om(S(a))
    &\le\om\left(S\left(r_k\frac{a}{|a|}\right)\right)
    \le C\om\left(S\left(r_{k+1}\frac{a}{|a|}\right)\right)
    \le\cdots\le C^{m-k+1}\om\left(S\left(|a'|\frac{a}{|a|}\right)\right)\\
    &=C^{2}2^{(m-k-1)\log_2 C}\om\left(S\left(a'\right)\right)
    \le C^2\left(\frac{1-|a|}{1-|a'|}\right)^{\log_2C}\om\left(S\left(a'\right)\right),
    \end{split}
    \end{equation*}
and thus (ii) is proved.

Assume (ii) and let $K>0$. For each $a\in\D\setminus\{0\}$ take $a'=\frac{K+|a|}{K+1}e^{i\arg a}$. Then (ii) implies
    \begin{equation*}
    \begin{split}
    \frac{\om(S(a))}{(1-|a|)^\b}\le C(K+1)^\beta\frac{\om(S(a'))}{(1-|a|)^\b},\quad a\in\D\setminus\{0\},
    \end{split}
    \end{equation*}
and thus it follows that (iii) is satisfied for each $K>0$.

Assume now (iii) for some $K>0$, and let $a_n=(1-(K+1)^n(1-|a|))e^{i\arg a}$ for $n=0,\ldots,N=\max\{k\in\N\cup\{0\}:(K+1)^k(1-|a|)<1\}$. Let $\eta>0$ to be fixed later. If $z\in S(a_n)\setminus S(a_{n-1})$, $n \geq 2$, then
    $$
    |1-\overline{a}z|\ge|a-z|\gtrsim|a-a_{n-1}|=(1-|a|)((K+1)^{n-1}-1)\asymp(1-|a|)(K+1)^n.
    $$
Moreover, since $\frac{K+|a_n|}{K+1}=|a_{n-1}|$, the hypothesis (iii) yields
    \begin{equation*}
    \begin{split}
    \int_{\D\setminus S(a_1)}\frac{\om(z)}{|1-\overline{a}z|^\eta}\,dA(z)
    &=\sum_{n=2}^N\int_{S(a_n)\setminus S(a_{n-1})}\frac{\om(z)}{|1-\overline{a}z|^\eta}\,dA(z)
    +\int_{\D\setminus S(a_N)}\frac{\om(z)}{|1-\overline{a}z|^\eta}\,dA(z)\\
    &\lesssim\frac1{(1-|a|)^\eta}\sum_{n=2}^N\frac{\om(S(a_n))}{(K+1)^{n\eta}}+1
    \le\frac{\om(S(a_0))}{(1-|a|)^\eta}\sum_{n=2}^\infty\left(\frac{C}{(K+1)^{\eta}}\right)^{n}+1.
    \end{split}
    \end{equation*}
In addition, clearly
    $$
    \int_{S(a_1)}\frac{\om(z)}{|1-\overline{a}z|^\eta}\,dA(z)
    \lesssim \frac{\om(S(a_1))}{(1-|a_1|)^\eta}.
    $$
By repeating the method used in the first part of the proof, it is easy to see that (iii) in fact implies (ii). Therefore (iv) follows by choosing $\eta>\log_{K+1}C$ sufficiently large so that $\om(S(a))(1-|a|)^{-\eta}$ is essentially increasing.

Finally, assume (iv) and denote $a^\star=\frac{1+|a|}{2}e^{i\arg a}$ for $a\in\D\setminus\{0\}$. Then
    \begin{equation*}
    \frac{\om(S(a))}{(1-|a^\star|)^\eta}
    \asymp\int_{S(a)}\frac{\om(z)}{|1-\overline{a^\star}z|^\eta}\,dA(z)
    \leq\int_{\D}\frac{\om(z)}{|1-\overline{a^\star}z|^\eta}\,dA(z)
    \lesssim\frac{\om(S(a^\star))}{(1-|a^\star|)^\eta},\quad a\in\D\setminus\{0\},
    \end{equation*}
and it follows that $\om\in\DD(\D)$.
\end{proof}

\begin{proofof}{Theorem~\ref{theo:dominating-sets-non-radial}}
Let $G$ be a dominating set for $A^p_\om$. It suffices to show~\eqref{eq:delta-condition-set-G-non-radial} for points $a \in \D$ close to the boundary. Let $\beta = \beta(\om) > 0$ and $\eta = \eta(\om) > 0$ be those in Lemma~\ref{Lemma:test-functions-non-radial}.
For each $a\in\D\setminus\{0\}$, define
    $$
    \widetilde{f}_a(z) = \left(\frac{1-|a|}{1-\overline{a}z}\right)^{m/p} \frac1{\om(S(a))^{1/p}}, \quad z\in\D,
    $$
where $m = m(\om)> \max\{\beta,\eta\}$. Then $\big\|\widetilde{f}_a\big\|_{A^p_\om}\asymp1$ for all $a\in\D\setminus\{0\}$ by Lemma~\ref{Lemma:test-functions-non-radial}. Set $f_a=\widetilde{f}_a/\big\|\widetilde{f}_a\big\|_{A^p_\om}$ for all $a\in\D\setminus\{0\}$.

For given $a$ close to the boundary, consider the points $a_n$ defined in the proof of Lemma~\ref{Lemma:test-functions-non-radial}. Then the proof of (iii) implies (iv) with $K=1$ in the said lemma gives
    \begin{equation*}
    \begin{split}
    \int_{\D\setminus S(a_n)}|\widetilde{f}_a(z)|^p\om(z)\,dA(z)
    &=\frac{(1-|a|)^m}{\om(S(a))}\int_{\D\setminus S(a_n)}\frac{\om(z)}{|1-\overline{a}z|^m}\,dA(z)\\
    &\lesssim\sum_{j=n+1}^\infty\left(\frac{C}{2^m}\right)^j\asymp\frac{1}{2^{mn}},\quad n\in\N,
    \end{split}
    \end{equation*}
for all $|a|$ sufficiently close to the boundary and $m$ sufficiently large. It follows that
    $$
    \int_{\D\setminus S(a_n)}|f_a(z)|^p\om(z)\,dA(z)\lesssim\frac1{2^{mn}},\quad n\in\N.
    $$
Since $G$ is a dominating set for $A^p_\om$ by the hypothesis, there exists $\d=\d(G)>0$ such that
    \begin{equation}\label{5}
    \d\|f\|^p_{A^p_\om}\le\int_G |f(z)|^p\om(z)\,dA(z), \quad f \in A^p_\om.
    \end{equation}
Fix $n\in\N$ sufficiently large such that
    $$
    \int_{\D\setminus  S(a_n)} |f_a(z)|^p\om(z)\,dA(z) < \frac\delta{2}.
    $$
Since $f_a$ has norm one in $A^p_\om$, this together with \eqref{5} yields
    \begin{equation*}
    \begin{split}
    \int_{G\cap  S(a_n)} |f_a(z)|^p\om(z)\,dA(z)
    &= \int_{G} |f_a(z)|^p\om(z)\,dA(z) - \int_{G\setminus  S(a_n)} |f_a(z)|^p\om(z)\,dA(z) \\
    &\geq \delta - \int_{\D\setminus  S(a_n)} |f_a(z)|^p\om(z)\,dA(z)
    > \d - \frac\d{2} = \frac\d{2},
\end{split}
\end{equation*}
and
    $$
    \int_{G\cap  S(a_n)} |f_a(z)|^p\om(z)\,dA(z)
    \asymp\int_{G\cap  S(a_n)} |\widetilde{f}_a(z)|^p\om(z)\,dA(z)
    \le\frac{\om(G\cap S(a_n))}{\om(S(a))}.
    $$
Since $\om\in\DD(\D)$, we have $\om(S(a))=\om(S(a_0))\ge C^{-n}\om(S(a_n))$, and now that $n$ is fixed, we deduce $\om(S(a_n))\lesssim\om(G\cap S(a_n))$ for all $a\in\D$ sufficiently close to the boundary. The claim \eqref{eq:delta-condition-set-G-non-radial} follows from this estimate.
\end{proofof}

\section{Sampling measures}

A positive Borel measure $\mu$ on $\D$ is a $q$-Carleson measure for $A^p_\om$ if $A^p_\om$ is continuously embedded into $L^q_\mu$.
In order to prove Theorem~\ref{theo:sampling-suff} we need the following lemma which is a generalization of~\cite[Theorem~2.3]{L1985}.
It readily follows from the proof that if the hypothesis on $\nu$ is replaced by $\nu(\Delta(a,r))\lesssim\om(\Delta(a,r))$, then on the left in both occasions $S(\z)$ must be replaced by $\Delta(\z,r)$. Luecking~\cite[Lemma~3.10]{L1985/2} showed this for $\nu=\om\in C_\infty$ under the hypothesis $\mu(\Delta(a,r))\lesssim\om(\Delta(a,r))$. Recall that, as discussed after Theorem~\ref{Theorem:D-hat}, this last requirement characterizes $p$-Carleson measures for $A^p_\om$ if $\om$ satisfies the the Bekoll\'e-Bonami condition by \cite[Theorem~3.1]{Constantin2}.

\begin{lem}\label{lem:difference-estimate}
Let $0 < p < \infty$, $0<r<\frac{R}2\le\frac14$ and $\om$ a weight. Let $\mu$ and $\nu$ be positive Borel measures on $\D$ such that $d\widetilde{\mu}(z) = \frac{\mu(\Delta(z,R))}{(1-|z|)^2}\,dA(z)$ is a $p$-Carleson measure for $A^p_\om$ and $\nu(S(a))\lesssim\om(S(a))$ for all $a\in\D\setminus\{0\}$. Then there exists a constant $C = C(p,R,\om,\mu,\nu) > 0$ such that
    $$
    \int_\D\left(\int_{S(\z)}|f(z)-f(\zeta)|^p\,d\nu(z)\right)\frac{d\mu(\zeta)}{\om(S(\zeta))}
    \le r^pC \|f\|^p_{A^p_\om}, \quad f \in A^p_\om.
    $$
In particular, if $\om\in\DD$ and $\mu$ is a $p$-Carleson measure for $A^p_\om$, then the statement is valid.
\end{lem}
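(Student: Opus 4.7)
The plan is to follow the strategy of Luecking~\cite[Theorem~2.3]{L1985} in spirit, with two main adaptations: the general weight $\om$ replaces the standard weight, and the hypothesis $\nu(S(a))\lesssim\om(S(a))$ plays the role of a one-sided Carleson condition for $\nu$ against $\om\,dA$. The starting point is a pointwise oscillation bound for $z$ pseudohyperbolically close to $\z$. For $z\in\Delta(\z,r)$ with $r<R/2$, the Cauchy integral formula together with a mean-value estimate for the subharmonic function $(1-|\cdot|)^p|f'|^p$ gives
\begin{equation*}
|f(z)-f(\z)|^p\lesssim r^p\sup_{w\in\Delta(\z,R/2)}\bigl((1-|w|)|f'(w)|\bigr)^p\lesssim\frac{r^p}{(1-|\z|)^2}\int_{\Delta(\z,R)}|f(\xi)|^p\,dA(\xi).
\end{equation*}
This is where the $r^p$ factor originates. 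The Carleson-square formulation will then be reduced to this pseudohyperbolic one by a covering and telescoping argument up the natural tree of dyadic subsquares of $S(\z)$, picking up an admissible constant.

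Integrating the above estimate against $d\nu(z)$ and using $\nu(S(\z))\lesssim\om(S(\z))$ to swallow the $\nu$-mass against the $1/\om(S(\z))$ factor, one arrives at
\begin{equation*}
\int_{S(\z)}|f(z)-f(\z)|^p\,d\nu(z)\lesssim\frac{r^p\,\om(S(\z))}{(1-|\z|)^2}\int_{\Delta(\z,R)}|f(\xi)|^p\,dA(\xi).
\end{equation*}
After dividing by $\om(S(\z))$ and integrating against $d\mu(\z)$, Fubini's theorem reverses the order of integration: for each $\xi\in\D$, the set of $\z$ with $\xi\in\Delta(\z,R)$ is contained in a pseudohyperbolic disc $\Delta(\xi,R')$ with $R'$ slightly larger than $R$, and on this set $(1-|\z|)\asymp(1-|\xi|)$. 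The inner integral then collapses into a quantity comparable to $\mu(\Delta(\xi,R))/(1-|\xi|)^2=d\widetilde{\mu}(\xi)/dA(\xi)$, and the $p$-Carleson hypothesis on $\widetilde{\mu}$ delivers the desired bound $\lesssim r^pC\|f\|^p_{A^p_\om}$.

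The main obstacle I expect is the Fubini bookkeeping, where the geometric comparability $(1-|\z|)\asymp(1-|\xi|)$ must be used together with the fact that $\om(S(\z))$ appears in the denominator; this is where the measure hypotheses are genuinely needed. The concluding ``in particular'' clause for $\om\in\DD$ will then be immediate, because in that case the characterization of $p$-Carleson measures through the maximal function $M_\om$ \cite{PR2015Embedding}, combined with the doubling property supplied by Lemma~\ref{Lemma:weights-in-D-hat}, ensures that whenever $\mu$ is a $p$-Carleson measure for $A^p_\om$ so is $\widetilde{\mu}$, reducing the general statement to the already established one.
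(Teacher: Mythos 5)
Most of your outline coincides with the paper's argument: the oscillation bound on $\Delta(\z,r)$ (the paper gets it by estimating $(f(\vp_\z(z))-f(\z))/z$ and changing variables, you via a Cauchy--derivative estimate, which is equivalent), then integration in $z$ against $d\nu$ with $\nu(S(\z))\lesssim\om(S(\z))$ absorbing the denominator, then integration against $d\mu(\z)/\om(S(\z))$, Fubini to produce $\mu(\Delta(w,R))/(1-|w|)^2$, and the $p$-Carleson hypothesis on $\widetilde{\mu}$ to finish. (Two small points: by symmetry of the pseudohyperbolic metric the Fubini set $\{\z: w\in\Delta(\z,R)\}$ is exactly $\Delta(w,R)$, no enlargement $R'$ is needed; and the ``in particular'' clause is not quite immediate -- it requires the short Fubini computation $\widetilde{\mu}(S(a))\lesssim\mu(S(b))$ for a suitably dilated square $S(b)$, after which $\mu(S(b))\lesssim\om(S(b))\asymp\om(S(a))$ follows from \cite[Theorem~1]{PR2015Embedding} and Lemma~\ref{Lemma:weights-in-D-hat}, as in the paper.)

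The genuine gap is your announced reduction of the Carleson-square formulation to the pseudohyperbolic one ``by a covering and telescoping argument up the natural tree of dyadic subsquares of $S(\z)$.'' No such reduction can produce the factor $r^p$. For $z\in S(\z)$ at pseudohyperbolic distance bounded away from $0$ from $\z$, the oscillation $|f(z)-f(\z)|$ carries no smallness in $r$; a chain joining $\z$ to a point deep inside $S(\z)$ has an unbounded number of links, each contributing an increment comparable to a local mean of $|f|$ with no factor $r$, so the telescoped sum loses $r^p$ entirely. Indeed, with the inner integral extended over all of $S(\z)$ the left-hand side does not depend on $r$ at all, so no argument can give a bound $r^pC\|f\|^p_{A^p_\om}$ valid for every $0<r<R/2$ (letting $r\to0^+$ with $f(z)=z$, $\om\equiv1$, $\nu=A$ and $\mu$ the restriction of area measure to $D(0,1/2)$ already shows this). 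The paper's proof applies the pointwise bound only where it holds, namely on $\Delta(\z,r)$; the Carleson square enters solely through $\nu(S(\z))\lesssim\om(S(\z))$, which cancels the denominator after the inner integral is restricted to $S(\z)\cap\Delta(\z,r)$, and this restricted version (equivalently, the one with $\Delta(\z,r)$ in place of $S(\z)$, as stated in the remark before the lemma) is exactly what is invoked later in the proof of Theorem~\ref{theo:sampling-suff}. So drop the telescoping step and restrict the inner domain accordingly; with that change the rest of your outline is the paper's proof.
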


\begin{pro}
First, it is easy to show that there exists a constant $C = C(p,R) > 0$ such that
    $$
    \left|\frac{f(z) - f(0)}{z}\right|^p \le C\int_{D(0,R)} |f(w)|^p\,dA(w)
    $$
for all $|z| < R/2$ and $f \in \mathcal{H}(\D)$. Thus, if $|z| < r < R/2$, then
    $$
    |f(z) - f(0)|^p
    \le r^p C \int_{D(0,R)} |f(w)|^p\,dA(w).
    $$
An application of this to $f\circ\vp_\zeta$ and a change of variable on the right yield
    $$
    |f(z) - f(\zeta)|^p
    \le r^p C \int_{\Delta(\zeta,R)} |f(w)|^p\frac{(1-|\zeta|^2)^2}{|1-\overline{\zeta}w|^4}\,dA(w),\quad z\in\Delta(\z,r).
    $$
By multiplying this by $\chi_{S(\z)}(z)/\om(S(\zeta))$, integrating with respect to $\nu$ in the variable $z$ and using the hypothesis on $\nu$ now give
    \begin{equation*}
    \begin{split}
    \int_{S(\z)}|f(z)-f(\zeta)|^p\frac{d\nu(z)}{\om(S(\zeta))}
    &\le\frac{4Cr^p}{\om(S(\zeta))} \int_{S(\z)}\left(\int_{\Delta(\zeta,R)}|f(w)|^p\frac{dA(w)}{(1-|w|)^{2}}\right)d\nu(z)\\
    &\lesssim r^p \int_{\Delta(\zeta,R)} |f(w)|^p\frac{dA(w)}{(1-|w|)^{2}}.
    \end{split}
    \end{equation*}
Now an integration with respect to $\mu$ in the variable $\zeta$ and Fubini's theorem yield
    \begin{equation*}
    \begin{split}
    \int_\D\left(\int_{S(\z)}|f(z)-f(\zeta)|^p\,d\nu(z)\right)\frac{d\mu(\zeta)}{\om(S(\zeta))}
    &\lesssim r^p\int_\D|f(w)|^p\frac{\mu(\Delta(w,R))}{(1-|w|)^2}\,dA(w)=r^p\|f\|_{L^p_{\widetilde{\mu}}}^p.
    \end{split}
    \end{equation*}
Therefore the first statement in the lemma follows by the hypothesis on $\widetilde\mu$.

It remains to show that if $\om\in\DD$, then $\widetilde{\mu}$ is a $p$-Carleson measure for $A^p_\om$ whenever $\mu$ is. To see this, use first Fubini's theorem to deduce
    \begin{equation*}
    \begin{split}
    \widetilde{\mu}(S(a)) &= \int_{S(a)} \frac{\mu(\Delta(z,R))}{(1-|z|)^2}\,dA(z)
    =\int_{\{\z:S(a)\cap\Delta(\z,R)\ne\emptyset\}}\left(\int_{S(a)\cap\Delta(\z,R)}\frac{dA(z)}{(1-|z|)^2}\right)d\mu(\z)\\
    &\le\int_{S(b)}\left(\int_{\Delta(\z,R)}\frac{dA(z)}{(1-|z|)^2}\right)d\mu(\z)\asymp\mu(S(b)),
    \end{split}
    \end{equation*}
where $b=b(a,R)\in \D$ is such that $\arg b=\arg a$ and $1-|b|\asymp1-|a|$ for all $a\in\D$. Now that $\mu$ is a $p$-Carleson measure for $A^p_\om$ by the hypothesis, \cite[Theorem~1]{PR2015Embedding} together with Lemma~\ref{Lemma:weights-in-D-hat} yields $\widetilde{\mu}(S(a))\lesssim\om(S(b))\asymp\om(S(a))$ for all $a\in\D$. Consequently, $\widetilde{\mu}$ is indeed a $p$-Carleson measure for $A^p_\om$, and the proof is complete.
\end{pro}

\begin{proofof}{Theorem~\ref{theo:sampling-suff}}
By Lemma~\ref{lem:difference-estimate}, with $d\nu = \om\,dA$ and $\Delta(\z,r)$ in place of $S(\z)$, and \cite[Lemma~3.10]{L1985/2} there exists a constant $C=C(p,R,\om,\mu)>0$ such that
    \begin{equation}\label{eq:proof-theo-sampling-suff}
    \int_\D\left(\int_{\Delta(z,r)}|f(z)-f(\zeta)|^p\frac{d\mu(\zeta)}{\om(\Delta(\zeta,r))}\right)\om(z)\,dA(z)
    \leq r^pC \|f\|^p_{A^p_\om}, \quad f \in A^p_\om,
\end{equation}
where $0<r<R/2\le1/4$. Let first $p \geq 1$. Then raising this to power $1/p$ and using Minkowski's inequality on the left yields
    \begin{equation*}
    \begin{split}
    &\Bigg(\int_\D |f(z)|^p\left(\int_{\Delta(z,r)}\frac{d\mu(\z)}{\om(\Delta(\zeta,r))}\right)\om(z)\,dA(z)\Bigg)^{1/p}\\
    &- \left(\int_\D\left(\int_{\Delta(z,r)}|f(\zeta)|^p\frac{d\mu(\zeta)}{\om(\Delta(\zeta,r))}\right)\om(z)\,dA(z)\right)^{1/p}
    \le rC^{1/p} \|f\|_{A^p_\om},
    \end{split}
    \end{equation*}
where, by Lemma~\ref{Lemma:weights-in-D-hat} and \cite[Lemma~3.4]{L1985/2}, the fact that $r$ is bounded away from 1 and the definition of $G$,
    \begin{equation*}
    \int_{\Delta(z,r)} \frac{d\mu(\zeta)}{\om(\Delta(\zeta,r))}
    \ge C_1 k_r(z)
    \ge C_1\e \|M_\om(\mu)\|_{L^\infty}\chi_G(z),\quad z\in\D,
    \end{equation*}
for some constant $C_1=C_1(\om)>0$, and thus
    $$
    \int_\D |f(z)|^p\left(\int_{\Delta(z,r)}\frac{d\mu(\z)}{\om(\Delta(\z,r))}\right)\om(z)\,dA(z)
    \ge C_1\e\|M_\om(\mu)\|_{L^\infty} \int_G |f(z)|^p\om(z)\,dA(z).
    $$
Further, by Fubini's theorem,
    \begin{equation*}
    \begin{split}
    \int_\D\left(\int_{\Delta(z,r)}|f(\zeta)|^p\frac{d\mu(\zeta)}{\om(\Delta(\z,r))}\right)\om(z)\,dA(z)
    &=\int_\D |f(\zeta)|^p\,d\mu(\zeta),
    \end{split}
    \end{equation*}
and hence
    $$
    \left(C_1\e\|M_\om(\mu)\|_{L^\infty} \int_G |f(z)|^p\om(z)\,dA(z)\right)^{1/p}
    - \left(\int_\D |f(\zeta)|^p\,d\mu(\zeta)\right)^{1/p}
    \le rC^{1/p} \|f\|_{A^p_\om}.
    $$
Since by the hypothesis $G$ is dominating set, there exists a constant $\alpha>0$ such that $\int_G |f(z)|^p\om(z)\,dA(z)\ge\alpha\|f\|^p_{A^p_\om}$ for all $f \in A^p_\om$. Consequently, choosing $r$ such that $r^pC < C_1\e\alpha\|M_\om(\mu)\|_{L^\infty}$ yields
\begin{equation}\label{eq:proof-theo-sampling-suff-2}
    \|f\|_{A^p_\om}
    \le\frac{1}{(C_1\e\alpha\|M_\om(\mu)\|_{L^\infty})^{1/p}-rC^{1/p}}
    \left(\int_\D |f(\zeta)|^p\,d\mu(\zeta)\right)^{1/p},
\end{equation}
and thus the proof is complete when $1 \leq p < \infty$.

If $p < 1$, then one can simply apply the inequality $|x - y|^p \geq |x|^p - |y|^p$ to the left hand side of~\eqref{eq:proof-theo-sampling-suff} to obtain
    \begin{equation*}
    \begin{split}
    &\int_\D |f(z)|^p\left(\int_{\Delta(z,r)}\frac{d\mu(\z)}{\om(\Delta(\z,r))}\right)\om(z)\,dA(z)\\
    &- \int_\D\left(\int_{\Delta(z,r)}|f(\zeta)|^p\frac{d\mu(\zeta)}{\om(\Delta(\z,r))}\right)\om(z)\,dA(z)
    \le r^p C \|f\|^p_{A^p_\om}.
    \end{split}
    \end{equation*}
Then the estimates used in the case $p \geq 1$ yield
    $$
    C_1\e\|M_\om(\mu)\|_{L^\infty} \int_G |f(z)|^p\om(z)\,dA(z)
    -\int_\D |f(\zeta)|^p\,d\mu(\zeta)
    \le r^pC \|f\|^p_{A^p_\om},
    $$
from which and estimate similar to~\eqref{eq:proof-theo-sampling-suff-2} follows as above.
\end{proofof}

\begin{proofof}{Theorem~\ref{theo:w-conv-meas}}
The first statement can be established by following the proof of~\cite[Theorem 1]{L2000}. Namely, since $\mu_n(S(a)\cap D(0,r))\le\|M_\om(\mu_n)\|_{L^\infty}\om(\D)$ for each $a\in\D$, the hypothesis $\sup_n\|M_\om(\mu_n)\|_{L^\infty}<\infty$ implies that for each $r\in(0,1)$, the sequence $\left(\mu_n|_{D(0,r)}\right)_{n\in\N}$ is bounded in $C_0(D(0,r))^*$. Hence there is a subsequence that converges in the weak$^*$-topology by the Banach-Alaoglu theorem. By diagonalization we may extract a subsequence $(\mu_{n_j})$ such that, for each $r \in (0,1)$, $(\mu_{n_j}|_{D(0,r)})$ converges to a measure $\mu_r$ supported in $\overline{D(0,r)}$. Since clearly $\mu_s = \mu_r$ on $D(0,r)$ for all $r<s$, we may define a measure $\mu$ as the limit $\lim_{r\to1^-} \mu_r$. If now $h\in C_c(\D)$, then it's support is in some $D(0,r)$, and thus
    $$
    \int_\D h(z)\,d\mu_{n_j}(z) \to \int_\D h(z)\,d\mu(z),\quad j \to \infty.
    $$

To prove \eqref{eq:w-conv-meas}, let $f \in A_\om^p$. For $h\in C_c(\D)$ satisfying $h(z) \leq 1$ for all $z \in \D$ we have
    \begin{equation*}
    \liminf_{n\to\infty} \int_\D |f(z)|^p\,d\mu_n(z)
    \ge\lim_{n\to\infty} \int_\D h(z)|f(z)|^p\,d\mu_n(z)
    =\int_\D h(z)|f(z)|^p\,d\mu(z)
    \end{equation*}
by Fatou's lemma, and hence
    \begin{equation}\label{1}
    \liminf_{n\to\infty} \int_\D |f(z)|^p\,d\mu_n(z) \geq \int_\D |f(z)|^p\,d\mu(z).
    \end{equation}
For the converse inequality, let $\e > 0$ and take $r=r(f)\in (0,1)$ such that
    $$
    \int_{\D\setminus\overline{D(0,r)}} |f(z)|^p\,\om(z)\,dA(z) < \e.
    $$
Let $h\in C_c(\D)$ such that $h(z) \leq 1$ for all $z \in \D$ and $h\equiv1$ on $\overline{D(0,r)}$. Then
    \begin{equation}\label{eq:weak-conv-pro}
    \int_\D |f(z)|^p\,d\mu_n(z)
    \le\int_\D h(z)|f(z)|^p\,d\mu_n(z)
    +\int_{\D\setminus\overline{D(0,r)}} |f(z)|^p\,d\mu_n(z).
    \end{equation}
Let us first handle the right most integral. For a moment, let $\nu$ be a $p$-Carleson measure for $A_\om^p$ and denote $d\nu_r = \chi_{\D\setminus\overline{D(0,r)}}\,d\nu$. Then, by~\cite[Theorem 9]{PR2015Embedding},
    \begin{equation*}
    \begin{split}
    \int_{\D\setminus\overline{D(0,r)}} |f(z)|^p\,d\nu(z)
        &= \int_\D \left|f(z)\chi_{\D\setminus\overline{D(0,r)}}(z)\right|^p\,d\nu_r(z) \\
        &\leq \int_\D N\left(f\chi_{\D\setminus\overline{D(0,r)}}\right)^p(z)\,d\nu_r(z) \\
        &\lesssim \int_\D N\left(f\chi_{\D\setminus\overline{D(0,r)}}\right)^p(z)
            M_\om(\nu_r)(z)\,\om(z)\,dA(z) \\
        &\leq \|M_\om(\nu_r)\|_{L^\infty} \int_{\D\setminus\overline{D(0,r)}} N(f)^p(z)\,\om(z)\,dA(z) \\
        &\asymp \|M_\om(\nu_r)\|_{L^\infty} \int_{\D\setminus\overline{D(0,r)}} |f(z)|^p\,\om(z)\,dA(z),\quad f \in A_\om^p,
    \end{split}
    \end{equation*}
where the last inequality follows from the fact that the classical non-tangential maximal function is a bounded operator from $H^p$ to $L^p$ of the boundary~\cite[Theorem~3.1 on p.~57]{Garnett1981}. Therefore there exists a constant $C=C(p)>0$ such that
\begin{equation*}
    \int_{\D\setminus\overline{D(0,r)}} |f(z)|^p\,d\mu_n(z)
        \leq C \sup_{n}
        \|M_\om(\mu_n)\|_{L^\infty} \int_{\D\setminus\overline{D(0,r)}}
            |f(z)|^p\,\om(z)\,dA(z)
        \leq C\Lambda\e.
\end{equation*}
Thus, by taking the limit superior of~\eqref{eq:weak-conv-pro} we obtain
\begin{equation*}
\begin{split}
    \limsup_{n\to\infty} \int_\D |f(z)|^p\,d\mu_n(z)
        &\le\int_\D h(z)|f(z)|^p\,d\mu(z) + C\Lambda\e
        \le\int_\D |f(z)|^p\,d\mu(z) + C\Lambda\e,
\end{split}
\end{equation*}
and since $\e > 0$ was arbitrary, by combining this with \eqref{1} we deduce \eqref{eq:w-conv-meas}.

Recall that if $\nu$ is a $p$-Carleson measure for $A^p_\om$, then $\|Id\|^p_{A^p_\om \to L^p_\nu}\asymp \|M_\om(\nu)\|_{L^\infty}$ by
\cite[Theorem~3]{PelRatSie2015}, and therefore $\sup_n\|Id\|^p_{A^p_\om \to L^p_{\mu_n}}=\Lambda_1<\infty$. It follows that
    $$
    \int_\D |f(z)|^p\,d\mu_n(z) \le\|Id\|_{A^p_\om \to L^p_{\mu_n}}^p\|f\|^p_{A^p_\om}
    \le\Lambda_1\|f\|^p_{A^p_\om}, \quad f \in A^p_\om,
    $$
for all $n \in \N$. By the identity \eqref{eq:w-conv-meas} just proved, we may pass to the limit to obtain
    $$
    \int_\D |f(z)|^p\,d\mu(z)\le\Lambda_1\|f\|^p_{A^p_\om}, \quad f \in A^p_\om.
    $$
Since this is also true for any subsequence of $(\mu_n)$, we may replace $\Lambda_1$ with $\liminf_{n \to \infty} \|Id\|^p_{A^p_\om \to L^p_{\mu_n}}$. Thus $\mu$ is a $p$-Carleson measure for $A^p_\om$ with $\|Id\|_{A^p_\om \to L^p_\mu}\le\liminf_{n\to\infty}\|Id\|_{A^p_\om\to L^p_{\mu_n}}$ as claimed.

In the case of sampling measures, the lower inequality follows in a manner similar to above. Details of this step are omitted.

\end{proofof}

\end{document}